\pgfplotsset{compat=newest}
\def\l@subsection{\@tocline{2}{0pt}{2.5pc}{5pc}{}}
\def\csname ver@etex.sty\endcsname{3000/12/31}
\crefname{lemma}{lemma}{lemmata}
\Crefname{lemma}{Lemma}{Lemmata}
\crefname{subsection}{subsection}{subsections}
\Crefname{subsection}{Subsection}{Subsections}
\crefname{conjecture}{conjecture}{conjectures}
\Crefname{conjecture}{Conjecture}{Conjectures}
\theoremstyle{plain}
\newtheorem{theorem}{Theorem}[section]
\newtheorem{lemma}[theorem]{Lemma}
\newtheorem{proposition}[theorem]{Proposition}
\newtheorem{corollary}[theorem]{Corollary}
\newtheorem{problem}{Problem}
\theoremstyle{definition}
\newtheorem{remark}[theorem]{Remark}
\theoremstyle{plain}
\newtheorem{introthm}{Theorem}
\newtheorem{introcor}[introthm]{Corollary}
\numberwithin{equation}{section} 
\newcommand{\ri}{\mathrm{i}}
\newcommand{\re}{\mathrm{e}}
\newcommand{\ms}[1]{\mathsf{#1}}
\DeclareMathOperator{\csch}{csch}
\newcommand{\lo}{\mathrm{o}}
\newcommand{\bO}{\mathrm{O}}
\newcommand{\Aut}{\mathrm{Aut}}
\newcommand{\MCG}{\mathrm{MCG}}
\newcommand{\SL}{\mathrm{SL}}
\newcommand{\comb}{\mathrm{comb}}
\newcommand{\sep}{\mathrm{sep}}
\newcommand{\ZZ}{\mathbb{Z}}
\newcommand{\QQ}{\mathbb{Q}}
\newcommand{\RR}{\mathbb{R}}
\newcommand{\CC}{\mathbb{C}}
\newcommand{\EE}{\mathbb{E}}
\newcommand{\PP}{\mathbb{P}}
\newcommand{\GG}{\mathbb{G}}
\newcommand{\cB}{\mathcal{B}}
\newcommand{\cM}{\mathcal{M}}
\newcommand{\cT}{\mathcal{T}}
\begin{document}

\title[Length spectrum of large genus random metric maps]{Length spectrum of large genus random metric maps}
\author[S.~Barazer]{Simon Barazer}
\address[S.~Barazer]{ 
	Université Paris-Saclay, CNRS, IHES, Bures-sur-Yvette, France %
}
\email{simon.barazer@universite-paris-saclay.fr}
\author[A.~Giacchetto]{Alessandro Giacchetto}
\address[A.~Giacchetto]{ 
	Departement Mathematik, ETH Zürich, Rämisstrasse 101, Zürich 8044, Switzerland
}
\email{alessandro.giacchetto@math.ethz.ch
}
\author[M.~Liu]{Mingkun Liu}
\address[M.~Liu]{ 
	DMATH, FSTM, University of Luxembourg, Esch-sur-Alzette, Luxembourg %
}
\email{mingkun.liu@uni.lu}

\subjclass[2020]{05C10, 05C80, 32G15, 57M50}
\keywords{random metric maps, large genus}

\begin{abstract}
	We study the length of short cycles on uniformly random metric maps (also known as ribbon graphs) of large genus using a Teichmüller theory approach. We establish that, as the genus tends to infinity, the length spectrum converges to a Poisson point process with an explicit intensity. This result extends the work of Janson and Louf to the multi-faced case.
\end{abstract}

\maketitle

\section{Introduction}
A \emph{map}, or a \emph{ribbon graph}, is a graph with a cyclic ordering of the edges at each vertex. By substituting edges with ribbons and attaching them at each vertex in accordance with the given cyclic order, we create an oriented surface with boundaries on which the graph is drawn (see \Cref{fig:RG}). Since Tutte's pioneering work \cite{Tut63}, ribbon graphs have been extensively studied, partly due to the increased interest following the realisation of their importance in two-dimensional quantum gravity.

\begin{figure}[b]
	\centering
	\begin{tikzpicture}[scale=.6]
		\draw[line width=10pt] (0,-1.7) -- (0,1.7);
		\draw[line width=10pt] (0,0) ellipse (2cm and 1.7cm);
		
		\draw[line width=9pt,white] (0,-1.7) -- (0,1.7);
		\draw[line width=9pt,white] (0,0) ellipse (2cm and 1.7cm);

		\draw[BrickRed,thick] (0,-1.7) -- (0,1.7);
		\draw[BrickRed,thick] (0,0) ellipse (2cm and 1.7cm);

		\node[BrickRed,thick] at (0,1.7) {$\bullet$};
		\node[BrickRed,thick] at (0,-1.7) {$\bullet$};

		\begin{scope}[xshift=8cm]
			\draw[line width=10pt] (0,1.7) to[out=90,in=180] (.5,2.3) to[out=0,in=90] (1,1.7) to[out=-90,in=90] (0,0) -- (0,-.1);
			\draw[line width=9pt,white] (0,1.7) to[out=90,in=180] (.5,2.3) to[out=0,in=90] (1,1.7) to[out=-90,in=90] (0,0) -- (0,-.2);
			\draw[BrickRed,thick] (0,1.7) to[out=90,in=180] (.5,2.3) to[out=0,in=90] (1,1.7) to[out=-90,in=90] (0,0) -- (0,-.2);

			\draw[line width=10pt] (0,-1.7) -- (0,-.1);
			\draw[line width=10pt] (0,0) ellipse (2cm and 1.7cm);
			
			\draw[line width=9pt,white] (0,-1.7) -- (0,0);
			\draw[line width=9pt,white] (0,1.7) to[out=90,in=180] (.5,2.3);
			\draw[line width=9pt,white] (0,0) ellipse (2cm and 1.7cm);
			
			\draw[BrickRed,thick] (0,1.7) to[out=90,in=180] (.5,2.3);
			\draw[BrickRed,thick] (0,0) -- (0,-1.7);

			\draw[BrickRed,thick] (0,0) ellipse (2cm and 1.7cm);
			\node[BrickRed,thick] at (0,1.7) {$\bullet$};
			\node[BrickRed,thick] at (0,-1.7) {$\bullet$};
		\end{scope}
	\end{tikzpicture}
	\caption{A ribbon graph of genus $0$ with $3$ faces (left) and a ribbon graph of genus $1$ with $1$ face (right).}
	\label{fig:RG}
\end{figure}
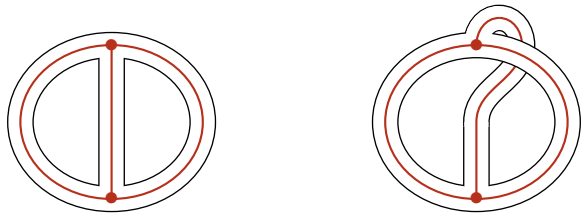

Much attention has been devoted to the study of \emph{metric} maps, i.e.\ ribbon graphs with the assignment of a positive real number to each edge. Remarkably, the moduli space parametrising metric ribbon graphs of a fixed genus $g$ and $n$ faces of fixed lengths is naturally isomorphic to the moduli space of Riemann surfaces of genus $g$ with $n$ punctures \cite{Har86,Pen87,BE88}. This fact was employed by Harer and Zagier to compute the Euler characteristic of the moduli space of Riemann surfaces \cite{HZ86} and by Kontsevich in his proof of Witten's conjecture \cite{Wit91,Kon92}. The latter is a formula that computes the ``number'' of metric ribbon graphs recursively on the Euler characteristic: a topological recursion. The same type of recursion applies to the ``number'' of hyperbolic surfaces, as discovered by Mirzakhani \cite{Mir07a}.

Recently, intensive research efforts have been centred around the \emph{random large genus regime}, both in the combinatorial and in the hyperbolic contexts. The study of large genus asymptotics holds significance for several reasons. Firstly, the intricate nature of several quantities simplifies enormously in the large genus limit, leading to closed-form asymptotic evaluations. Secondly, many interesting quantities associated to several geometric models appear to be exclusively attainable in the asymptotic regime. A (far from exhaustive) list of examples in the combinatorial setting include the connectivity \cite{Ray15,Lou22,BCL}, the local limit \cite{BL21,BL22}, and cycle statistics \cite{JL22,JL23}. Analogously, examples in the hyperbolic setting include the connectivity \cite{Mir13,BCP21,BCP}, the local limit \cite{Mon22}, curve statistics \cite{GPY11,MP19,DGZZ22,DL,WX,HSWX}, and the Laplacian spectrum \cite{WX22,LW23,AM,HM23,LS23,GLST21,Rud23,Nau}.

\subsection{The results}
In the present article, we study short cycles on metric ribbon graphs of large genus. A \emph{cycle} on a metric ribbon graph is sequence of distinct edges that join a sequence of distinct vertices. The length of a cycle is the sum of the lengths of its edges. For a given metric ribbon graph $G$, we define its \emph{length spectrum} $\Lambda(G)$ as the multiset of lengths of all cycles in $G$.

Denote by $\bm{G}_{g,L}$ a uniform random metric ribbon graph of genus $g$ with $n$ marked faces of lengths $L = (L_1, \dots, L_n)$ subject to the scaling condition
\begin{equation} \label{eq:scaling}
	L_1 + \dots + L_n \sim 12g.
\end{equation}
As $\bm{G}_{g,L}$ has almost surely $6g-6+3n$ edges, and the total length of its faces is twice the total length of all edges, the scaling condition implies that, on average, every edge has length $1$. In this sense, the scaling condition is a natural assumption in this context. See \Cref{sec:background} for the precise notion of random metric ribbon graph.

Our main result, proved in \Cref{sec:comb:spec,sec:proof}, is an explicit description of $\Lambda(\bm{G}_{g,L})$ in the large genus limit as a \emph{Poisson point process}.

\begin{introthm} \label{thm:PPP}
	For any fixed $n$, the random multiset $\Lambda(\bm{G}_{g,L})$, viewed as a point process on $\RR_{\geq 0}$, converges in distribution as $g \to \infty$ to a Poisson point process of intensity $\lambda$ defined by
	\begin{equation} \label{eq:intensity}
		\lambda(\ell)
		\coloneqq
		\frac{\cosh(\ell) - 1}{\ell}.
	\end{equation}
\end{introthm}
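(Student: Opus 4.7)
The plan is to prove Poisson convergence via the method of factorial moments. By a standard criterion for Poisson convergence of point processes (Kallenberg's theorem), it suffices to show that for every finite collection of disjoint bounded intervals $I_1,\dots,I_m \subset \RR_{\geq 0}$ and non-negative integers $k_1,\dots,k_m$,
\begin{equation*}
	\EE\!\left[ \prod_{j=1}^m \bigl( N_{I_j}(\Lambda(\bm{G}_{g,L})) \bigr)_{k_j} \right]
	\xrightarrow[g \to \infty]{}
	\prod_{j=1}^m \left( \int_{I_j} \lambda(\ell)\,d\ell \right)^{k_j},
\end{equation*}
where $(x)_k = x(x-1)\cdots(x-k+1)$ is the falling factorial and $N_I(\Lambda)$ denotes the number of points of $\Lambda$ in $I$. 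The whole strategy is to reduce the theorem to the explicit large-$g$ evaluation of such moments, expressed in terms of Kontsevich volume polynomials.

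First, by linearity of expectation the factorial moment unfolds into a weighted sum over ordered $k$-tuples of combinatorial types of distinct cycles on the ribbon graph, the weight being the probability that such a configuration is realised in $\bm{G}_{g,L}$ with the prescribed metric constraints. For a disjoint, pairwise non-homotopic, jointly non-separating family of cycles of metric lengths $\ell_1,\dots,\ell_k$, cutting along them produces a metric ribbon graph of genus $g-k$ with $n+2k$ boundary faces, the new ones having lengths $\ell_i,\ell_i$. Hence the probability that the configuration occurs equals --- up to explicit combinatorial symmetry and simplex-volume factors --- the ratio $V^K_{g-k,n+2k}(L,\ell_1,\ell_1,\dots,\ell_k,\ell_k)/V^K_{g,n}(L)$ of Kontsevich volumes.

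Next, I would invoke the large-$g$ asymptotics of Kontsevich volumes in the scaling regime $L_1+\dots+L_n \sim 12g$ to show that this ratio factorises, to leading order, as a product $\prod_{i=1}^k h(\ell_i)+\lo(1)$ for an explicit function $h$. Summing over combinatorial types of a single cycle --- a cycle with $e$ edges of total metric length $\ell$ contributing the simplex volume $\ell^{e-1}/(e-1)!$ with a dihedral symmetry factor $1/(2e)$, and the ribbon-graph closure condition imposing a parity constraint on $e$ --- should reconstitute the Taylor series
\begin{equation*}
	\lambda(\ell) = \sum_{k \geq 1} \frac{\ell^{2k-1}}{(2k)!} = \frac{\cosh(\ell)-1}{\ell}
\end{equation*}
of the intensity \eqref{eq:intensity}. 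The factorised product over the $\ell_i$ produces precisely the $k$-th factorial moment of a Poisson point process, as required.

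The principal obstacle is estimating the contribution of \emph{exceptional} cycle configurations and showing they are $\lo(1)$: configurations in which two cycles share a vertex or an edge, are freely homotopic, are jointly separating, or individual cycles that themselves bound a disk or are separating. Each such class of configurations must be bounded using uniform estimates on Kontsevich volumes with a growing number of boundary components, together with a careful enumeration of how cycles can interact with each other and with the $n$ pre-existing marked faces. The latter interaction is the genuinely new ingredient compared to the one-faced setting of Janson--Louf, and I expect most of the technical work to be concentrated there. The explicit form of the intensity in \eqref{eq:intensity}, with its parity structure encoded by $\cosh$, then provides an \emph{a posteriori} consistency check on the combinatorial enumeration carried out in the main step.
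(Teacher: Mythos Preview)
Your overall strategy --- factorial moments plus volume ratios --- is correct and matches the paper's. But two substantive points diverge from the actual argument, and one of them is a genuine misconception.

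\textbf{How the intensity arises.} You write the volume ratio $V^K_{g-k,n+2k}(L,\ell,\ell,\dots)/V^K_{g,n}(L)$ correctly, but then you claim that to obtain $\lambda(\ell)$ one must further ``sum over combinatorial types of a single cycle --- a cycle with $e$ edges contributing simplex volume $\ell^{e-1}/(e-1)!$ with dihedral factor $1/(2e)$ and a parity constraint.'' This is not what happens. In the Teichm\"uller framework the integration formula already accounts for \emph{all} metric realisations of a given mapping-class-group orbit of simple closed curves; there is no residual sum over edge-counts. The intensity comes out in one stroke from the volume asymptotics: by Kontsevich's formula $V_{g,n}(L)$ is a Taylor coefficient of $\prod_i \sinh(L_i z)/L_i$, and a saddle-point estimate (the paper's Proposition~3.1 / Corollary~3.2) gives
\[
\frac{V_{g-1,n+2}(L,\ell,\ell)}{V_{g,n}(L)} \sim 2\,\frac{\sinh^2(\ell/2)}{\ell^2} = \frac{\cosh(\ell)-1}{\ell^2}.
\]
After the automorphism factor $\tfrac12$ and the $\ell\,d\ell$ from the integration formula, this is $\lambda(\ell)\,d\ell$. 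The $\cosh$ comes from the \emph{two} hyperbolic sines attached to the two new boundaries, not from any parity of cycle edge-counts. Your edge-count heuristic is closer in spirit to the Janson--Louf argument, which the paper explicitly notes does not generalise to $n>1$.

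\textbf{Curves versus cycles.} Related to this: the integration formula applies to (free homotopy classes of) simple closed \emph{curves}, not to graph-theoretic cycles. The paper computes the main term for $\bar N^{\circ}$ (non-separating simple curves) and then proves separately that $\bar N^{\circ}-N^{\circ}=\bO(g^{-1/2})$; i.e.\ curves whose geodesic representative revisits an edge are asymptotically negligible. You have not identified this step, and without it the link between the volume-ratio computation and the cycle statistic $\Lambda(\bm G_{g,L})$ is missing.

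\textbf{The error terms.} Your final paragraph correctly flags the exceptional configurations as the hard part, but offers no mechanism. The paper's device is an auxiliary function $M_{A,B;\Gamma}$ indexed by \emph{separating} stable graphs, which simultaneously dominates the separating, intersecting, and non-cycle contributions. Bounding $\sum_{\Gamma}\EE[M_{A,B;\Gamma}]=\bO(g^{-1/2})$ requires Aggarwal's uniform bound on $\psi$-class intersection numbers (not just the asymptotic) together with a careful combinatorial sum over all stable-graph topologies. This is the technical heart of the paper and is not something one can wave through with ``uniform estimates on Kontsevich volumes.''
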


Actually, we shall prove the following result which implies \Cref{thm:PPP} through the method of moments. For any non-empty interval $[a,b) \subset \RR_{\ge 0}$, denote by $N_{[a,b)}(\bm{G}_{g,L})$ the number of cycles in $\bm{G}_{g,L}$ of length falling within the interval $[a,b)$.

\begin{introthm} \label{thm:main}
	For any fixed $n$ and disjoint intervals $[a_1,b_1), \dots, [a_p,b_p) \subset \RR_{\ge 0}$, the random vector
	\begin{equation}
		\Bigl(
			N_{[a_1, b_1)}(\bm{G}_{g,L}),
			\dots,
			N_{[a_p, b_p)}(\bm{G}_{g,L})
		\Bigr)
	\end{equation}
	converges in distribution as $g \to \infty$ to a vector of independent Poisson variables of means
	\begin{equation}
		\left(
			\int_{a_1}^{b_1} \lambda(\ell) \, d\ell,
			\dots,
			\int_{a_p}^{b_p} \lambda(\ell) \, d\ell
		\right) .
	\end{equation}
\end{introthm}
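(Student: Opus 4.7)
The plan is to prove \Cref{thm:main} by the method of factorial moments. For a vector $(X_1,\dots,X_p)$ of independent Poisson variables with means $(\mu_1,\dots,\mu_p)$, the joint factorial moment identity
\[
\EE\Bigl[\prod_{i=1}^p (X_i)_{k_i}\Bigr] = \prod_{i=1}^p \mu_i^{k_i},
\qquad (x)_k \coloneqq x(x-1)\cdots(x-k+1),
\]
uniquely characterises the distribution. It therefore suffices to establish the analogous asymptotic identity with $X_i$ replaced by $N_{[a_i,b_i)}(\bm{G}_{g,L})$ and $\mu_i$ by $\int_{a_i}^{b_i}\lambda(\ell)\,d\ell$. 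Unravelling the definitions, the left-hand side is precisely the expected number of ordered $K$-tuples (with $K=\sum_i k_i$) of pairwise distinct cycles in $\bm{G}_{g,L}$ whose lengths are distributed across the intervals according to $(k_1,\dots,k_p)$. Independence in the limit, as well as the disjointness of intervals, is then built into this combinatorial reformulation, so I may focus on the joint moment.

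The main computational input is a Mirzakhani-style integration formula adapted to the combinatorial (Kontsevich) model of moduli space. For each topological type $\sigma$ of a $K$-tuple of pairwise disjoint essential simple cycles on a genus-$g$, $n$-faced surface, cutting along $\sigma$ yields a (possibly disconnected) combinatorial surface with $n+2K$ boundary components, and summing over the mapping class group orbit of $\sigma$ gives, for any test function $f$,
\[
\EE\!\Bigl[\sum_{(\gamma_j)\sim\sigma} f\bigl(\ell(\gamma_1),\dots,\ell(\gamma_K)\bigr)\Bigr]
=
\frac{1}{|\Aut(\sigma)|\,V_{g,n}(L)}
\int_{\RR_{\ge 0}^K}
f(\ell)\,\prod_{j=1}^K \ell_j\cdot V_{\sigma}(L,\ell,\ell)\,d\ell,
\]
where $V_{g,n}(L)$ denotes the Kontsevich volume and $V_\sigma$ the corresponding product over the components of the cut surface. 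This formula follows directly from the product structure of the Kontsevich measure under cutting along essential cycles. Choosing $f$ to be the indicator of the relevant product of intervals reduces the problem to an asymptotic analysis of Kontsevich volume ratios.

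The topological types split into the \emph{generic} one, in which the $K$ cycles are jointly non-separating so that the cut surface remains connected of type $(g-K,n+2K)$, and all other \emph{exceptional} types. For the generic type $|\Aut(\sigma)|=2^K$ (from the involutions swapping the two new boundary components at each cut), and the contribution to the moment specialises to
\[
\int_{\RR_{\geq 0}^K} f(\ell) \prod_{j=1}^K \frac{\ell_j}{2}\cdot
\frac{V_{g-K,n+2K}(L,\ell_1,\ell_1,\dots,\ell_K,\ell_K)}{V_{g,n}(L)}\,d\ell.
\]
To recover the intensity $\lambda(\ell) = (\cosh\ell-1)/\ell = 2\sinh^2(\ell/2)/\ell$, I need the large-genus asymptotic
\[
\frac{V_{g-K,n+2K}(L,\ell_1,\ell_1,\dots,\ell_K,\ell_K)}{V_{g,n}(L)}
\;\longrightarrow\;
\prod_{j=1}^K \frac{4\sinh^2(\ell_j/2)}{\ell_j^2},
\]
uniformly in $\ell$ on bounded sets, under the scaling condition~\eqref{eq:scaling}; multiplication by $\prod_j \ell_j/2$ then produces exactly $\prod_j \lambda(\ell_j)$, and independence across disjoint intervals comes from the fact that $f$ factorises.

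The main obstacle is twofold. First, establishing the precise large-genus behaviour of the Kontsevich volume ratios displayed above, uniformly in the boundary parameters $(L,\ell)$. This is the multi-faced refinement of the ratio estimates of Janson-Louf in the unicellular case, and the combinatorial analog of the Weil-Petersson volume asymptotics of Mirzakhani-Zograf and Aggarwal; the dependence on $n$ growing face lengths (through the scaling \eqref{eq:scaling}) is what makes the multi-faced case genuinely harder than the $n=0$ one. Second, one must show that each exceptional topological type — separating cycles, cycles bounding a disk containing some marked faces, or multicurves with disconnected complement — contributes only $o(1)$ to the factorial moment. This requires analogous but finer ratio estimates exhibiting a genus-dependent penalty in volume that defeats any combinatorial multiplicity, in the spirit of Mirzakhani-Petri's handling of separating curves on hyperbolic surfaces.
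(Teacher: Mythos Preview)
Your strategy---method of factorial moments, integration formula, volume ratio asymptotics, separating out a generic non-separating configuration---is exactly the one the paper follows, and your volume ratio asymptotic for the generic term matches \Cref{cor:VV}. But there is a genuine gap in your reduction from the factorial moment to a sum over multicurve types.

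The factorial moment counts ordered $K$-tuples of \emph{distinct} cycles, not of \emph{simple, pairwise disjoint} cycles. A cycle in the sense of this paper (an edge-path visiting each edge at most once) need not be a simple closed curve on the surface, and two distinct cycles can intersect. The integration formula you invoke only applies to primitive multicurves, so it does not directly compute the contribution of tuples containing a non-simple cycle or an intersecting pair. Your list of ``exceptional types'' only mentions topologies of the complement (separating, bounding marked faces, disconnected cut surface); it omits precisely this non-simple/intersecting case, which is the term $N_{g,L,I}^{\times}$ in the paper's decomposition \eqref{eq:trinity}. Handling it is the crux: the paper passes from the intersecting tuple to the boundary multicurve of its tubular neighbourhood (Proposition~\ref{prop:timesNeqSumM}), and the bound on how many cycle-tuples share a given neighbourhood uses in an essential way that a cycle visits each edge at most once---this is where the cycle condition enters, and why the result fails for arbitrary closed curves (\Cref{thm:turtle:neck}).

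There is a second, smaller gap: your ``generic'' integration-formula computation actually evaluates the contribution of all simple non-separating closed \emph{curves} (the quantity $\bar{N}^{\circ}$), not only those that happen to be cycles on the particular ribbon graph. You still need the paper's Claim~\ref{itm:D} (Proposition~\ref{lem:simpleNeqSumM}) to show that simple non-separating curves which are not cycles contribute $o(1)$.
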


As an application, we obtain the law of the length of the shortest cycle, known as \emph{girth} or \emph{systole}, on a random metric ribbon graph of large genus.

\begin{introcor} \label{cor:girth}
	For any fixed $n$, the girth converges in distribution to a non-homogeneous exponential distribution with rate function $\lambda$. In other words, we have
	\begin{equation}
		\lim_{g \to \infty} \PP\bigl[ \mathrm{girth}(\bm{G}_{g,L}) \leq t \bigr]
		=
		1 - \exp \mathopen{}\left( - \int_0^t \lambda(\ell) \, d\ell \right)\mathclose{}.
	\end{equation}
\end{introcor}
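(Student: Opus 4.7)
The plan is to deduce \Cref{cor:girth} as an immediate packaging of \Cref{thm:main} for a single half-line interval. The key observation is that the girth exceeds $t$ if and only if no cycle has length strictly less than $t$; equivalently,
$$
	\bigl\{ \mathrm{girth}(\bm{G}_{g,L}) > t \bigr\}
	= \bigl\{ N_{[0,t)}(\bm{G}_{g,L}) = 0 \bigr\}.
$$
(For $g \geq 1$ the first Betti number of the underlying graph is $2g + n - 1 \geq 1$, so cycles exist almost surely and the girth is well-defined.)

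Next, I would apply \Cref{thm:main} with $p = 1$ to the single interval $[0,t) \subset \RR_{\geq 0}$. This yields that $N_{[0,t)}(\bm{G}_{g,L})$ converges in distribution as $g \to \infty$ to a Poisson random variable $Z$ of mean $\mu(t) \coloneqq \int_0^t \lambda(\ell) \, d\ell$. Since both the prelimit and the limit take values in $\NN$, weak convergence is equivalent to pointwise convergence of the probability mass functions; in particular,
$$
	\lim_{g \to \infty} \PP\bigl[ N_{[0,t)}(\bm{G}_{g,L}) = 0 \bigr]
	= \PP[Z = 0]
	= e^{-\mu(t)}.
$$
Combining with the first observation gives $\lim_{g\to\infty}\PP[\mathrm{girth}(\bm{G}_{g,L}) > t] = e^{-\mu(t)}$, and passing to the complementary event produces exactly the CDF stated in the corollary.

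There is essentially no obstacle: the corollary is a direct consequence of \Cref{thm:main} restricted to a half-line anchored at $0$. The only mild points to check are that the interval $[0,t)$ is covered by \Cref{thm:main} (which it is, since the hypothesis ranges over arbitrary non-empty half-open subintervals of $\RR_{\geq 0}$) and that weak convergence of $\NN$-valued random variables transfers to convergence of atomic probabilities (a standard fact, e.g.\ via the portmanteau theorem applied to the continuity set $\{0\}$). Once \Cref{thm:main} is in hand, the corollary follows in the few lines above.
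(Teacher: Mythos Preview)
Your proposal is correct and follows exactly the intended route: the paper does not give a separate proof of this corollary, treating it as an immediate consequence of \Cref{thm:main}, and your argument---identifying $\{\mathrm{girth}>t\}$ with $\{N_{[0,t)}=0\}$ and reading off the Poisson atom at $0$---is precisely that deduction spelled out.
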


We remark that all results presented here hold in the more general setting where the boundary lengths are subjected to the scaling condition $L_1 + \dots + L_n \sim \mu 12g$ for some $\mu > 0$. In this case, the intensity is given by by $\lambda_{\mu}(\ell) \coloneqq (\cosh(\ell/\mu) - 1)/\ell$. Besides, all results are still valid when replacing ``cycles'' by ``closed walks that do not traverse the same edge more than $D$ times'', with $D$ a fixed positive integer.

In comparison to the analogous results for hyperbolic surfaces due to Mirzakhani and Petri \cite{MP19}, a natural comment is due. The length spectrum of a hyperbolic surface (or more generally, any Riemannian manifold) is commonly defined as the multiset of lengths of the shortest primitive closed curve in each free homotopy class. In this combinatorial setting, it would make sense to consider the length spectrum defined analogously, instead of restricting it to only cycles. However, \Cref{thm:main} fails to hold true when all closed curves are considered, and it fails even when restricted to all simple closed curves.
More precisely, let $\bar{N}_{[a,b)}(G)$ denote the number of (free homotopy classes of primitive) closed curves in $G$ with length falling within the interval $[a,b)$, and let $\bar{N}_{[a,b)}^\circ(G)$ denote the number of simple closed curves in $G$.

\begin{introthm} \label{thm:turtle:neck}
	For any fixed $(g,n)$, boundary lengths $L \in \RR_{>0}^n$, and $[a,b) \subset \RR_{\ge 0}$, we have:
	\begin{itemize} \setlength\itemsep{.5em}
		\item $\EE\bigl[ \bar{N}_{[a, b)}(\bm{G}_{g,L}) \bigr] = \infty$,

		\item $\EE\bigl[ \bar{N}^{\circ}_{[a,b)}(\bm{G}_{g,L}) \bigr] < \infty$ and  $\EE\bigl[ \bar{N}^{\circ}_{[a,b)}(\bm{G}_{g,L})^k \bigr] = \infty$ for any $k > 3/2$.
	\end{itemize}
\end{introthm}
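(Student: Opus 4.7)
The plan addresses the three assertions in sequence.

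\textbf{Divergence of $\EE[\bar{N}_{[a,b)}(\bm{G}_{g,L})]$.}
The plan is to witness the divergence by conditioning on a favourable combinatorial type. I fix any combinatorial ribbon graph $G_0$ of positive weight in the ensemble that carries a self-loop $\alpha$ at some vertex $v$ together with another cycle $\beta$ through $v$ (such $G_0$ exist whenever the moduli space has positive dimension). The words $\alpha^p\beta$ for $p \geq 1$ give pairwise distinct primitive conjugacy classes in $\pi_1(G_0, v)$ of geometric length $p \ell_\alpha + \ell_\beta$. Conditioning on $G_{\mathrm{comb}} = G_0$, the edge lengths are Lebesgue-distributed on a compact polytope, and the marginal density of the single-edge variable $\ell_\alpha$ is strictly positive at the origin. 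Hence $\PP[p \ell_\alpha + \ell_\beta \in [a,b)] \gtrsim 1/p$ for large $p$, and summing yields the divergent harmonic series.

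\textbf{Finiteness of $\EE[\bar{N}^{\circ}_{[a,b)}(\bm{G}_{g,L})]$.}
For this one I would invoke the combinatorial analogue of Mirzakhani's integration formula. For each of the finitely many $\MCG$-orbits $[\gamma]$ of primitive simple closed curves on $\Sigma_{g,n}$, the contribution to the expected count is
\[
\frac{1}{|\Aut|} \int_a^b \frac{V_{\Sigma \setminus \gamma}(\ell, L)}{V_{g,n}(L)} \, d\ell,
\]
with $V_{\Sigma \setminus \gamma}$ a product of Kontsevich volumes of the pieces of $\Sigma \setminus \gamma$. The integrand is polynomial on the bounded interval $[a,b)$, so each integral is finite; summing over the finitely many orbits gives the claim.

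\textbf{Divergence of $\EE[\bar{N}^{\circ}_{[a,b)}(\bm{G}_{g,L})^k]$ for $k > 3/2$.}
I will establish the tail bound $\PP[\bar{N}^{\circ}_{[a,b)} \geq t] \gtrsim t^{-3/2}$, whence every moment of order strictly above $3/2$ diverges. The driving mechanism is a \emph{thin} sub-once-punctured torus $T \cong \Sigma_{1,1}$ bounded by a separating simple closed curve $\partial T$ of length $\asymp \delta$. On such a thin $T$, a Farey-type lattice-point count, based on the piecewise-linear length formula for simple closed curves on a trivalent ribbon graph of type $(1,1)$, produces $\asymp \delta^{-2}$ primitive simple closed curves of length in $[a,b)$ supported on $T$. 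The combinatorial Mirzakhani formula, applied with $V_{1,1}(L) = L^2/48$, yields $\PP[\ell_{\partial T} \leq \delta] \asymp \delta^{3}$. Combining these two estimates,
\[
\PP\bigl[\bar{N}^{\circ}_{[a,b)} \geq t\bigr] \geq \PP\bigl[\ell_{\partial T} \lesssim t^{-1/2}\bigr] \asymp t^{-3/2}.
\]
The main technical obstacle will be the $\delta^{-2}$ lattice-point count on thin sub-$\Sigma_{1,1}$'s, which requires making the piecewise-linear length function explicit in terms of the three internal edge lengths and tracking the correct exponent in the combinatorial Mirzakhani density; once this is aligned, the moment divergence follows immediately from the tail bound.
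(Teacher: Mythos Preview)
Your argument for the finiteness of $\EE[\bar{N}^{\circ}_{[a,b)}]$ (integration formula plus polynomiality of the Kontsevich volumes, summed over the finitely many mapping class group orbits of simple closed curves) is correct; the paper does not actually supply a proof of this clause.

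For the divergence of the higher moments of $\bar{N}^{\circ}_{[a,b)}$, your strategy coincides with the paper's: both localise to a thin one-holed sub-torus, use the quadratic growth of simple closed curves there to get $\bar{N}^{\circ}\gtrsim\ell_{\partial T}^{-2}$, and then control the law of $\ell_{\partial T}$ via the integration formula. However, your probability computation drops a power of $\delta$. The integration formula carries the measure $\ell\,d\ell$ (coming from the twist), so
\[
\PP[\ell_{\partial T}\le\delta]\ \asymp\ \frac{1}{V_{g,n}(L)}\int_0^{\delta} V_{1,1}(\ell)\,V_{g-1,n+1}(L,\ell)\,\ell\,d\ell\ \asymp\ \int_0^{\delta}\ell^{3}\,d\ell\ \asymp\ \delta^{4},
\]
not $\delta^{3}$. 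With this correction your tail bound becomes $\PP[\bar{N}^{\circ}\ge t]\gtrsim t^{-2}$, which yields divergence only for $k\ge 2$, not $k>3/2$. The paper writes down exactly the same integral (with the $\ell\,d\ell$) and asserts the threshold $k>3/2$, so you are in good company; but the integral as written does not support that threshold.

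For the divergence of $\EE[\bar{N}_{[a,b)}]$, your route is genuinely different from the paper's and more elementary in spirit: the paper again passes through a thin sub-torus and invokes the \emph{exponential} growth of all closed curves there (its Lemma on $\bar N_{1,L,[a,b)}\ge C L\,e^{c/L}$), whereas you stay in a single top-dimensional cell and extract a harmonic series from the family $\alpha^{p}\beta$. Two points need repair. First, at a trivalent vertex $v$ carrying a self-loop $\alpha$ the third half-edge is the unique non-loop exit, so any closed path through $v$ other than a power of $\alpha$ traverses that edge at least twice; there is no ``cycle $\beta$ through $v$'' in the paper's sense, and you must allow $\beta$ to be a closed curve. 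Second, and more seriously, the estimate $\PP[p\ell_\alpha+\ell_\beta\in[a,b)]\gtrsim 1/p$ requires the joint density of $(\ell_\alpha,\ell_\beta)$ to be bounded below on a neighbourhood of $\{0\}\times[a,b)$. For a \emph{fixed} $\beta$ this can fail outright: if $|L|$ is small relative to $a$, every finite edge-path in $G_0$ is shorter than $a$ and the support of $\ell_\beta$ misses $[a,b)$. One can rescue the argument by letting $\beta$ depend on $L$ and $[a,b)$ (e.g.\ replacing $\beta$ by a long enough non-backtracking word such as $(\alpha\beta)^{q}$), but this must be said. The paper's sub-torus approach sidesteps the issue because the rescaling built into its torus lemma automatically aligns the word-length window with $[a,b)$.
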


The intuitive reason behind the above failing is the fact that metric ribbon graphs have zero curvature, and as such they can be scaled. Hence, the number of short closed curves on a metric ribbon graph $G$ can blow up when $G$ approaches the boundary of the moduli space. See \Cref{sec:wrong} for a detailed discussion.

\Cref{thm:PPP} is supported by evidence from numerical simulations, as discussed in \Cref{sec:numerics}. \Cref{fig:simulation} illustrates cycle length statistics derived from three samples of $10^3$ uniform random one-faced metric ribbon graphs of genera $2$, $8$, and $64$ respectively. The theoretical prediction is depicted in lime.

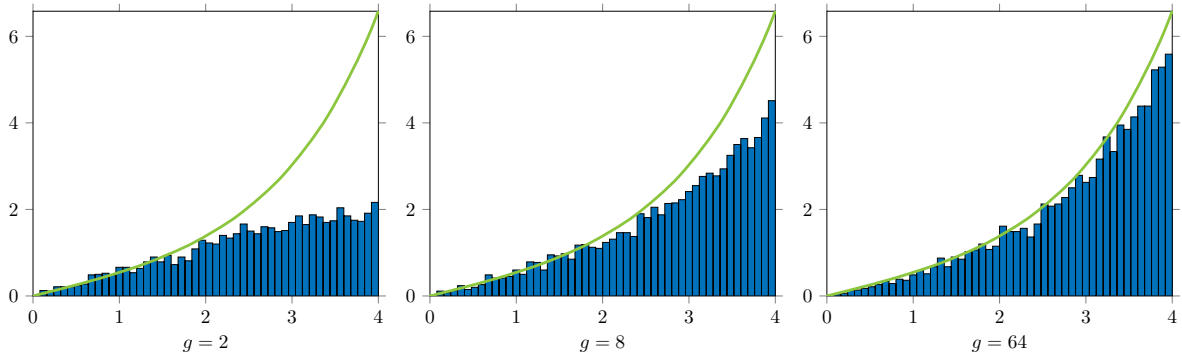
\begin{figure}
	\centering
	\begin{subfigure}[b]{.32\textwidth}
		\centering
        \begin{adjustbox}{width=\textwidth}
        \begin{tikzpicture}
		\begin{axis}[
			xmin=0, xmax=4, ymin=0, ymax=6.58,
			tick align=outside,
			xlabel = {\(g = 2\)}
			]
			\addplot[ybar interval,mark=no,fill=RoyalBlue] plot coordinates {
				( 0.08 , 0.1250 ) ( 0.16 , 0.1125 ) ( 0.24 , 0.2125 ) ( 0.32 , 0.2125 ) ( 0.40 , 0.2125 ) ( 0.48 , 0.2375 ) ( 0.56 , 0.2750 ) ( 0.64 , 0.4875 ) ( 0.72 , 0.5000 ) ( 0.80 , 0.5250 ) ( 0.88 , 0.4875 ) ( 0.96 , 0.6625 ) ( 1.04 , 0.6625 ) ( 1.12 , 0.5375 ) ( 1.20 , 0.6375 ) ( 1.28 , 0.7875 ) ( 1.36 , 0.9000 ) ( 1.44 , 0.7875 ) ( 1.52 , 0.9375 ) ( 1.60 , 0.7250 ) ( 1.68 , 0.9000 ) ( 1.76 , 0.8125 ) ( 1.84 , 1.0875 ) ( 1.92 , 1.2875 ) ( 2.00 , 1.2250 ) ( 2.08 , 1.2000 ) ( 2.16 , 1.4000 ) ( 2.24 , 1.3375 ) ( 2.32 , 1.4375 ) ( 2.40 , 1.6625 ) ( 2.48 , 1.5000 ) ( 2.56 , 1.4375 ) ( 2.64 , 1.6000 ) ( 2.72 , 1.5750 ) ( 2.80 , 1.4875 ) ( 2.88 , 1.5125 ) ( 2.96 , 1.7000 ) ( 3.04 , 1.8500 ) ( 3.12 , 1.6500 ) ( 3.20 , 1.8750 ) ( 3.28 , 1.8250 ) ( 3.36 , 1.7000 ) ( 3.44 , 1.7375 ) ( 3.52 , 2.0375 ) ( 3.60 , 1.8500 ) ( 3.68 , 1.7500 ) ( 3.76 , 1.7250 ) ( 3.84 , 1.9125 ) ( 3.92 , 2.1625 ) ( 4.00 , 2.2500 )
			};
			\addplot[LimeGreen, ultra thick, smooth, samples=20] {(cosh(x) - 1)/x};
		\end{axis}
		\end{tikzpicture}
        \end{adjustbox}
	\end{subfigure}
	\begin{subfigure}[b]{.32\textwidth}
		\centering
        \begin{adjustbox}{width=\textwidth}
		\begin{tikzpicture}
		\begin{axis}[
			xmin=0, xmax=4, ymin=0, ymax=6.58,
			tick align=outside,
			xlabel = {\(g = 8\)}
			]
			\addplot[ybar interval,mark=no,fill=RoyalBlue] plot coordinates {
				( 0.08 , 0.1125 ) ( 0.16 , 0.0750 ) ( 0.24 , 0.1250 ) ( 0.32 , 0.2375 ) ( 0.40 , 0.1500 ) ( 0.48 , 0.2000 ) ( 0.56 , 0.2625 ) ( 0.64 , 0.4875 ) ( 0.72 , 0.4125 ) ( 0.80 , 0.4125 ) ( 0.88 , 0.4500 ) ( 0.96 , 0.6000 ) ( 1.04 , 0.5000 ) ( 1.12 , 0.7875 ) ( 1.20 , 0.7750 ) ( 1.28 , 0.6000 ) ( 1.36 , 0.9500 ) ( 1.44 , 0.9250 ) ( 1.52 , 0.9750 ) ( 1.60 , 0.8500 ) ( 1.68 , 1.1750 ) ( 1.76 , 1.1875 ) ( 1.84 , 1.1250 ) ( 1.92 , 1.1000 ) ( 2.00 , 1.2375 ) ( 2.08 , 1.3125 ) ( 2.16 , 1.4625 ) ( 2.24 , 1.4625 ) ( 2.32 , 1.3750 ) ( 2.40 , 1.9000 ) ( 2.48 , 1.8125 ) ( 2.56 , 2.0500 ) ( 2.64 , 1.8750 ) ( 2.72 , 2.1375 ) ( 2.80 , 2.1500 ) ( 2.88 , 2.2250 ) ( 2.96 , 2.4125 ) ( 3.04 , 2.5500 ) ( 3.12 , 2.7625 ) ( 3.20 , 2.8375 ) ( 3.28 , 2.7750 ) ( 3.36 , 2.9375 ) ( 3.44 , 3.2500 ) ( 3.52 , 3.5000 ) ( 3.60 , 3.6375 ) ( 3.68 , 3.4250 ) ( 3.76 , 3.6625 ) ( 3.84 , 4.1125 ) ( 3.92 , 4.5125 ) ( 4.00 , 4.3625 )
			};
			\addplot[LimeGreen, ultra thick, smooth, samples=20] {(cosh(x) - 1)/x};
		\end{axis}
		\end{tikzpicture}
        \end{adjustbox}
	\end{subfigure}
	\begin{subfigure}[b]{.32\textwidth}
		\centering
        \begin{adjustbox}{width=\textwidth}
		\begin{tikzpicture}
		\begin{axis}[
			xmin=0, xmax=4, ymin=0, ymax=6.58,
			tick align=outside,
			xlabel = {\(g = 64\)}
			]
			\addplot[ybar interval,mark=no,fill=RoyalBlue] plot coordinates {
				( 0.08 , 0.0500 ) ( 0.16 , 0.0625 ) ( 0.24 , 0.1250 ) ( 0.32 , 0.1375 ) ( 0.40 , 0.1750 ) ( 0.48 , 0.2125 ) ( 0.56 , 0.2625 ) ( 0.64 , 0.3375 ) ( 0.72 , 0.2875 ) ( 0.80 , 0.3875 ) ( 0.88 , 0.3625 ) ( 0.96 , 0.4875 ) ( 1.04 , 0.5875 ) ( 1.12 , 0.5125 ) ( 1.20 , 0.6750 ) ( 1.28 , 0.8750 ) ( 1.36 , 0.6750 ) ( 1.44 , 0.9000 ) ( 1.52 , 0.8500 ) ( 1.60 , 1.0250 ) ( 1.68 , 1.0625 ) ( 1.76 , 1.2000 ) ( 1.84 , 1.0750 ) ( 1.92 , 1.1500 ) ( 2.00 , 1.6125 ) ( 2.08 , 1.4875 ) ( 2.16 , 1.4875 ) ( 2.24 , 1.5625 ) ( 2.32 , 1.3625 ) ( 2.40 , 1.6625 ) ( 2.48 , 2.1250 ) ( 2.56 , 2.0750 ) ( 2.64 , 2.1250 ) ( 2.72 , 2.2750 ) ( 2.80 , 2.5000 ) ( 2.88 , 2.7875 ) ( 2.96 , 2.6250 ) ( 3.04 , 2.7375 ) ( 3.12 , 3.1625 ) ( 3.20 , 3.6750 ) ( 3.28 , 3.3375 ) ( 3.36 , 3.9500 ) ( 3.44 , 3.8500 ) ( 3.52 , 4.1375 ) ( 3.60 , 4.3875 ) ( 3.68 , 4.3875 ) ( 3.76 , 5.2250 ) ( 3.84 , 5.2875 ) ( 3.92 , 5.5875 ) ( 4.00 , 5.7875 )
			};
			\addplot[LimeGreen, ultra thick, smooth, samples=20] {(cosh(x) - 1)/x};
		\end{axis}
		\end{tikzpicture}
        \end{adjustbox}
	\end{subfigure}
	\caption{
		In blue, the cycle length statistics of random  unicellular metric maps of genus $g = 2$, $8$, and $64$, sampled over $10^3$ units and properly rescaled. The predicted intensity $\lambda$ is depicted in lime.
	}
	\label{fig:simulation}
\end{figure}

\subsection{Related works and proof strategy}
In \cite{MP19}, Mirzakhani and Petri study the length spectrum of a random closed hyperbolic surface $\bm{S}_g$ of genus $g$, sampled according to the Weil--Petersson measure. They prove that the length spectrum $\Lambda(\bm{S}_g)$ converges in distribution as $g \to \infty$ to a Poisson point process with intensity $\lambda$ defined as in \Cref{eq:intensity}. In \cite{JL23}, Janson and Louf consider uniform random unicellular (i.e.\ one-faced) maps $\bm{U}_{v,g}$ of genus $g$ with $v$ vertices, metrised by assigning length $1$ to all edges. They prove that, as $g$, $v \to \infty$ with $g = \lo(v)$, the normalised length spectrum $\sqrt{12g/v} \cdot \Lambda(\bm{U}_{v,g})$ converges in distribution to a Poisson point process with exactly the same intensity $\lambda$. The convergence of the length spectrum of large random hyperbolic surfaces or random maps to a Poisson point process is not entirely unexpected (such events follow the \textit{Poisson paradigm}, see \cite{Wor99,MWW04,Pet17,Roi} for results along these lines). However, the precise matching of intensity functions is somehow miraculous.

While both \cite{MP19} and \cite{JL23} employ the moment method in their proofs, their approaches are of completely different natures. On the one hand, Mirzakhani and Petri compute moments by means of an integration formula developed by Mirzakhani in her thesis \cite{Mir07a}, and the large genus asymptotic analysis relies on the work of Mirzakhani and Zograf on Weil--Petersson volumes \cite{Mir13,MZ15}. On the other hand, the approach adopted by Janson and Louf is entirely combinatorial. In their proof, a bijection due to Chapuy, Féray, and Fusy \cite{CFF13} between unicellular maps and trees decorated by permutations plays a crucial role, enabling them to proceed using results on random trees and random permutations, both extensively explored subjects. We emphasise that, as the Chapuy--Féray--Fusy bijection is tied to the unicellular case, the method of Janson and Louf does not extend to the multi-faced case.

The current paper follows a Teichmüller theory approach, and the proof strategy is similar to that of \cite{MP19}. More precisely, our proof makes use of the framework established in \cite{ABCGLW}, which brings several tools from hyperbolic geometry into combinatorics, as well as the recent result by Aggarwal \cite{Agg21} on the large genus asymptotics of $\psi$-class intersection numbers. This combination of techniques allows us to extend the results of \cite{JL23} to the multi-faced cases.

Let us briefly mention why the model considered in \cite{JL23} coincides with the one discussed in the present work for $n = 1$. Curien, Janson, Kortchemski, Louf, and Marzouk proposed an alternative model to random metric unicellular maps which behave as $\bm{U}_{v,g}$ (see \cite{Lou23}).
Let $\bm{V}_g$ be a uniform random unicellular trivalent map of genus $g$, metrised by assigning to all $6g-3$ edges i.i.d.\ $\mathrm{Exp}(1)$ random lengths $\ell_i$ (exponential distribution of parameter $1$). Denote by $L \coloneqq 2(\ell_1 + \cdots + \ell_{6g-3})$ the length of the unique face. It is a standard fact that if $\ell_1, \dots, \ell_{6g-3}$ are i.i.d.\ $\mathrm{Exp}(1)$ variables, then the random vector $X \coloneqq(2\ell_1/L, \dots, 2\ell_{6g-3}/L)$ is $\mathrm{Dir}(1^{6g-3})$ distributed (Dirichlet distribution of order $6g-3$ of parameters $(1, 1, \dots, 1)$), and $L$ and $X$ are independent thanks to Lukács's proportion-sum independence theorem. Such a model, conditioned on $L$ being fixed, is nothing but $\bm{G}_{g,L}$.

\subsection{Outlook}
As highlighted in \cite[Section~1.4]{JL23}, random metric ribbon graphs and random hyperbolic surfaces exhibit strikingly similar behaviours. The former has the advantage of being both combinatorial and topological in nature, bringing many tools and insights from combinatorics and graph theory into the realm of 2D geometry. Besides, ribbon graphs are prone to numerical tests, as demonstrated in \Cref{fig:simulation}.

These advantages were previously emphasised in \cite{ABCGLW}, where, for instance, the computation of a combinatorial versions of Mirzakhani's kernels $\mathcal{R}$ and $\mathcal{D}$ reduced to a straightforward combinatorial check rather than intricate hyperbolic geometry computations. Another illustration is the computation of the expectation value of cycles with length in $[a,b)$ and self-intersection one, where a simple combinatorial consideration implies that in the $g \to \infty$ limit, the expectation value reads
\begin{equation}
	\frac{1}{g}
	\int_a^b \Bigl(
		\tfrac{1}{3} \, \ell \cosh(\ell)
		-
		\tfrac{2}{3} \, \sinh(\ell)
		-
		\tfrac{11}{12} \, \ell \cosh \mathopen{}\left( \tfrac{\ell}{2} \right)\mathclose{}
		+
		\left(
			\tfrac{\ell^2}{96} + 4
		\right)
		\sinh \mathopen{}\left( \tfrac{\ell}{2} \right)\mathclose{}
		-
		\tfrac{4}{3} \, \ell
	\Bigr) d\ell
	+
	\bO \mathopen{}\left( \frac{1}{g^2} \right)\mathclose{}.
\end{equation}
This is an example of a Friedman--Ramanujan function, a class of functions that play a central role in the spectral gap problem in the hyperbolic setting \cite{AM}.

A plausible explanation for the similarity of the two models could be derived from the spine construction of \cite{BE88}. This direction has already been investigated for various quantities associated with both the hyperbolic and combinatorial moduli spaces, such as the symplectic structures in \cite{Do}, naturally defined functions in \cite{ABCGLW}, and shapes of complementary subsurfaces in \cite{AC}. We intend to revisit this direction in future works.

With this perspective in mind, it is then natural to consider the following question.

\begin{problem}
	What does the Laplacian spectrum of $\bm{G}_{g,L}$ look like as $g \to \infty$?
	And how does it relate to the Laplacian spectrum of random hyperbolic surfaces?
\end{problem}

To tackle this problem, we may start by identifying the \emph{local limit}, also known as the \emph{Benjamini--Schramm limit} \cite{BS01}, of $\bm{G}_{g,L}$ when $g \to \infty$.
It is well-known that the spectra of a sequence of graphs are closely related to the Benjamini--Schramm limit of the sequence.
More specifically, the Benjamini--Schramm convergence implies the convergence of spectral measures, see e.g. \cite{ATV}.
This principle has recently been extended to metric graphs (or quantum graphs) in \cite{AS19, AISW}. Hence, we consider the following question of particular interest.

\begin{problem}
	Understand the local limit of $\bm{G}_{g,L}$ as $g \to \infty$.
\end{problem}

It is reasonable to expect that the local limit of a random trivalent unicellular map is the random infinite trivalent metric tree $\bm{T}$ with i.i.d.\ $\mathrm{Exp}(1)$ distributed edge lengths. Thus, we naturally conjecture that $\bm{G}_{g,L}$ converges in the Benjamini--Schramm sense to $\bm{T}$. With this in mind, it would be interesting to explore in parallel the following question.

\begin{problem}
	Understand the Laplacian spectrum of $\bm{T}$. How does it compare to the Laplacian spectrum of the hyperbolic plane?
\end{problem}

The local limit of unicellular maps (not necessarily trivalent) when the genus grows in proportion to the number of edges has been identified by Angel, Chapuy, Curien and Ray in \cite{ACCR13}, which is a supercritial Galton--Watson tree conditional to be infinite.
More recently, the case of moderate genus growth has been studied by Curien, Kortchemski, and Marzouk \cite{CKM22}.
In particular, they prove that the mesoscopic scaling limit of the core of such maps is the infinite trivalent tree whose edge lengths are i.i.d.\ exponential variables.

\subsection*{Acknowledgement}
The authors would like to thank Nalini~Anantharaman, Jérémie~Bouttier, Nicolas~Curien, Vincent~Delecroix, David~Fisac, Sébastien~Labbé, Baptiste~Louf, Nina~Morishige, Hugo~Parlier, Bram~Petri, Yunhui~Wu, and Anton~Zorich for helpful discussions.
The last author would like to thank Camille~Deperraz, Louis~Deperraz, Thierry~Deperraz, and Coralie~Oudot for their warm hospitality, making it possible to visit the other two authors multiple times in Paris.
This project started during a ``mini-rencontre ANR MoDiff''; it is a pleasure to thank its organisers.

S.B.\ and A.G.\ are supported by the ERC-SyG project ``Recursive and Exact New Quantum Theory'' (ReNewQuantum), which received funding from the European Research Council under the European Union's Horizon 2020 research and innovation programme under grant agreement N\textsuperscript{\underline{\scriptsize o}} 810573.
M.L.\ is supported by the Luxembourg National Research Fund OPEN grant O19/13865598.

\section{Background}
\label{sec:background}
In this section, we recall some background material about the geometry of the combinatorial Teichmüller and moduli spaces (see \cite{ABCGLW} for more details), as well as some probabilistic tools that will be used in order to prove the main result of the paper.

\subsection{Combinatorial Teichmüller and moduli spaces}
A ribbon graph is a finite graph $G$ together with a cyclic order of the edges at each vertex. By replacing each edge by a closed ribbon and glueing them at each vertex according to the cyclic order, we obtain a topological, oriented, compact surface called the geometric realisation of $G$. Notice that the graph is a deformation retract of its geometric realisation. We will assume that $G$ is connected and all vertices have valency $\ge 3$.

The geometric realisation of a ribbon graph $G$ will have $n \ge 1$ boundary components, also called faces, and we always assume they are labelled as $\partial_1 G, \dots , \partial_n G$. Denote by $V(G)$, $E(G)$, $F(G)$ the set of vertices, edges, and faces, respectively. We define the genus $g \ge 0$ to be the genus of the geometric realisation. Thus, $|V(G)| - |E(G)| + |F(G)| = 2 - 2g$. The datum $(g, n)$ is called the type of $G$.

A metric ribbon graph is the data $(G,\ell)$ of a ribbon graph $G$ together with the assignment of a positive real number for each edge, that is, $\ell \colon E(G) \to \RR_{>0}$. In the following, we will omit the map $\ell$ from the notation, and simply denote it by $\ell_G$ when needed. For a given metric ribbon graph $G$ and a non-trivial edge-path $\gamma$, we can define the length $\ell_G(\gamma)$ as the sum of the length of edges (with multiplicity) visited by $\gamma$. In particular, we can talk about length of the boundary components $\ell_G(\partial_i G)$.

Fix now a connected, compact, oriented surface $\Sigma$ of genus $g \ge 0$ with $n \ge 1$ labelled boundaries, denoted $\partial_1\Sigma, \dots , \partial_n\Sigma$. Fix $L \in \RR_{>0}^n$. Define the combinatorial Teichmüller space as the space parametrising metric ribbon graphs of type $(g,n)$ with fixed boundary lengths embedded into $\Sigma$, up to isotopy:
\begin{equation}
	\cT^{\textup{comb}}_{\Sigma}(L)
	\coloneqq
	\Set{
		(G,f) | \substack{
			\displaystyle G \text{ is a metric ribbon graph} \\[.25em]
			\displaystyle \text{with boundary lengths }\ell_G(\partial_i G) = L_i \\[.25em]
			\displaystyle f \colon G \hookrightarrow \Sigma \text{ is a retract}
		}
	} \Bigg/ \sim
\end{equation}
where the equivalence relation is given by
\begin{equation}
	(G,f) \sim (G',f')
	\qquad\text{if and only if}\qquad
	\substack{
		\displaystyle \exists \, \varphi \colon  G \to G' \text{ an isometry such that } f' \circ \varphi = f \\
		\displaystyle \text{and $f$ and $f'$ are isotopic.}
	}
\end{equation}
Notice that, as $G$ is a retract of $\Sigma$, it has the same genus and number of boundary components as $\Sigma$. Often, we will denote elements of $\cT^{\textup{comb}}_{\Sigma}(L)$ by $\GG$.

It can be shown that $\cT^{\textup{comb}}_{\Sigma}(L)$ is a real polytopal complex of dimension $6g - 6 + 2n$. The cells are labelled by embedded ribbon graphs, and they parametrise all possible metrics with fixed boundary lengths on the corresponding embedded graph; the cells are glued together via edge degeneration (see \Cref{fig:comb:Teich:torus} for an example). The pure mapping class group $\MCG_{\Sigma}$ of isotopy classes of orientation preserving homeomorphisms of $\Sigma$ preserving the boundary components naturally acts on $\cT^{\textup{comb}}_{\Sigma}(L)$. The quotient space
\begin{equation}
	\cM^{\textup{comb}}_{g,n}(L)
	\coloneqq
	\cT^{\textup{comb}}_{\Sigma}(L) \big/ \MCG_{\Sigma}
\end{equation}
is called the combinatorial moduli space. It parametrises metric ribbon graphs of type $(g,n)$ with fixed boundary lengths. It is a real polytopal orbicomplex of dimension $6g - 6 + 2n$. The orbicells are labelled by ribbon graphs, and they parametrise all possible metrics with fixed boundary lengths on the corresponding ribbon graph, up to automorphism (see \Cref{fig:comb:mod:torus} for an example).

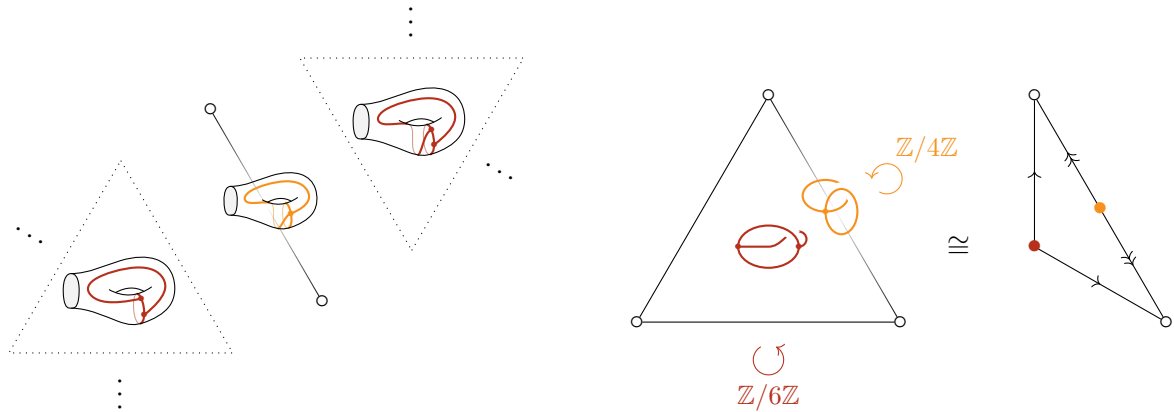
\begin{figure}
	\centering
	\begin{subfigure}[t]{.45\textwidth}
	\centering
	\begin{tikzpicture}[scale=1.7]
		
		\draw [dotted] (90:1) -- (-30:1) -- (-150:1) -- cycle;

		\node [rotate=90] at (-90:.8) {$\cdots$};
		\node [rotate=-30] at (150:.8) {$\cdots$};

		\begin{scope}[x=1pt,y=1pt,scale=.12,xshift=-7.7cm,yshift=-26cm]
			\draw [fill,fill opacity=.05] (128, 768) .. controls (112, 768) and (112, 736) .. (114.6667, 720) .. controls (117.3333, 704) and (122.6667, 704) .. (128, 704) .. controls (133.3333, 704) and (138.6667, 704) .. (141.3333, 720) .. controls (144, 736) and (144, 768) .. (128, 768);
			\draw (209.404, 732.847) .. controls (228.388, 743.2543) and (245.7957, 744.001) .. (261.627, 735.087);
			\draw [BrickRed,thick] (248.106, 730.251) .. controls (258.861, 733.847) and (269.878, 680.516) .. (254.529, 675.325);
			\draw [BrickRed,opacity=.5] (254.529, 675.325) .. controls (241.304, 673.617) and (238.33, 728.644) .. (248.106, 730.251);
			\draw [BrickRed,thick] (262.033, 691.578) .. controls (304.739, 725.931) and (304.677, 745.5385) .. (296.6465, 759.1363) .. controls (288.616, 772.734) and (272.617, 780.322) .. (247.5522, 779.2083) .. controls (222.4873, 778.0947) and (188.3567, 768.2793) .. (171.6688, 757.4658) .. controls (154.981, 746.6523) and (155.736, 734.8407) .. (166.9005, 725.4618) .. controls (178.065, 716.083) and (199.639, 709.137) .. (256.785, 721.802);
			\draw (197.435, 738.371) .. controls (224.2723, 723.5677) and (250.4063, 725.2853) .. (275.837, 743.524);
			\draw (128, 768) .. controls (160, 768) and (192, 784) .. (218.6667, 792) .. controls (245.3333, 800) and (266.6667, 800) .. (282.6667, 794.6667) .. controls (298.6667, 789.3333) and (309.3333, 778.6667) .. (314.6667, 762.6667) .. controls (320, 746.6667) and (320, 725.3333) .. (309.3333, 709.3333) .. controls (298.6667, 693.3333) and (277.3333, 682.6667) .. (261.3333, 677.3333) .. controls (245.3333, 672) and (234.6667, 672) .. (213.3333, 680) .. controls (192, 688) and (160, 704) .. (128, 704);
			\node [BrickRed] at (256.45, 721.746) {\scalebox{.5}{$\bullet$}};
			\node [BrickRed] at (262.033, 691.578) {\scalebox{.5}{$\bullet$}};
		\end{scope}
		
		
		\draw [black,path fading=east] (30:1.3) -- ($(30:1.3) + (120:0.866)$);
		\draw [black,path fading=west] (30:1.3) -- ($(30:1.3) + (-60:0.866)$);

		\node [white] at ($(30:.8) + (90:1)$) {$\bullet$};
		\node [white] at ($(30:.8) + (-30:1)$) {$\bullet$};
		\node at ($(30:.8) + (90:1)$) {$\circ$};
		\node at ($(30:.8) + (-30:1)$) {$\circ$};

		\begin{scope}[x=1pt,y=1pt,scale=.1,xshift=4cm,yshift=-19cm]
			\draw [BurntOrange,thick] (248.106, 730.251) .. controls (258.861, 733.847) and (269.878, 680.516) .. (254.529, 675.325);
			\draw [BurntOrange,opacity=.5] (254.529, 675.325) .. controls (241.304, 673.617) and (238.33, 728.644) .. (248.106, 730.251);
			\draw [BurntOrange,thick] (260.839, 707.65) .. controls (304.739, 725.931) and (304.677, 745.5385) .. (296.6465, 759.1363) .. controls (288.616, 772.734) and (272.617, 780.322) .. (247.5522, 779.2083) .. controls (222.4873, 778.0947) and (188.3567, 768.2793) .. (171.6688, 757.4658) .. controls (154.981, 746.6523) and (155.736, 734.8407) .. (168.1023, 731.8956) .. controls (180.4685, 728.9505) and (204.446, 734.872) .. (213.157, 731.563);
			\draw [BurntOrange,opacity=.5] (213.157, 731.563) .. controls (223.556, 729.146) and (222.7375, 716.9415) .. (222.474, 703.8698) .. controls (222.2105, 690.798) and (222.502, 676.859) .. (231.579, 674.591);
			\draw [BurntOrange,thick] (231.579, 674.591) .. controls (239.505, 673.659) and (247.229, 704.233) .. (260.839, 707.65);

			\draw [fill,fill opacity=.05] (128, 768) .. controls (112, 768) and (112, 736) .. (114.6667, 720) .. controls (117.3333, 704) and (122.6667, 704) .. (128, 704) .. controls (133.3333, 704) and (138.6667, 704) .. (141.3333, 720) .. controls (144, 736) and (144, 768) .. (128, 768);
			\draw (209.404, 732.847) .. controls (228.388, 743.2543) and (245.7957, 744.001) .. (261.627, 735.087);
			\draw (197.435, 738.371) .. controls (224.2723, 723.5677) and (250.4063, 725.2853) .. (275.837, 743.524);
			\draw (128, 768) .. controls (160, 768) and (192, 784) .. (218.6667, 792) .. controls (245.3333, 800) and (266.6667, 800) .. (282.6667, 794.6667) .. controls (298.6667, 789.3333) and (309.3333, 778.6667) .. (314.6667, 762.6667) .. controls (320, 746.6667) and (320, 725.3333) .. (309.3333, 709.3333) .. controls (298.6667, 693.3333) and (277.3333, 682.6667) .. (261.3333, 677.3333) .. controls (245.3333, 672) and (234.6667, 672) .. (213.3333, 680) .. controls (192, 688) and (160, 704) .. (128, 704);
			\node [BurntOrange] at (260.8388, 707.6496) {\scalebox{.5}{$\bullet$}};
		\end{scope}

		
		\draw [dotted] ($(30:2.6) + (-90:1)$) -- ($(30:2.6) + (30:1)$) -- ($(30:2.6) + (150:1)$) -- cycle;

		\node [rotate=90] at ($(30:2.6) + (90:.8)$) {$\cdots$};
		\node [rotate=-30] at ($(30:2.6) + (-30:.8)$) {$\cdots$};

		\begin{scope}[x=1pt,y=1pt,scale=.12,xshift=11cm,yshift=-15cm]
			\draw [BrickRed,thick] (248.106, 730.251) .. controls (258.861, 733.847) and (269.878, 680.516) .. (254.529, 675.325);
			\draw [BrickRed,opacity=.5] (254.529, 675.325) .. controls (241.304, 673.617) and (238.33, 728.644) .. (248.106, 730.251);
			\draw [BrickRed,thick] (262.091, 693.38) .. controls (304.739, 725.931) and (304.677, 745.5385) .. (296.6465, 759.1363) .. controls (288.616, 772.734) and (272.617, 780.322) .. (247.5522, 779.2083) .. controls (222.4873, 778.0947) and (188.3567, 768.2793) .. (171.6688, 757.4658) .. controls (154.981, 746.6523) and (155.736, 734.8407) .. (168.1023, 731.8956) .. controls (180.4685, 728.9505) and (204.446, 734.872) .. (213.157, 731.563);
			\draw [BrickRed,opacity=.5] (213.157, 731.563) .. controls (223.556, 729.146) and (222.7375, 716.9415) .. (222.474, 703.8698) .. controls (222.2105, 690.798) and (222.502, 676.859) .. (231.579, 674.591);
			\draw [BrickRed,thick] (231.579, 674.591) .. controls (239.505, 673.659) and (242.7, 717.524) .. (257.365, 720.38);

			\draw [fill,fill opacity=.05] (128, 768) .. controls (112, 768) and (112, 736) .. (114.6667, 720) .. controls (117.3333, 704) and (122.6667, 704) .. (128, 704) .. controls (133.3333, 704) and (138.6667, 704) .. (141.3333, 720) .. controls (144, 736) and (144, 768) .. (128, 768);
			\draw (209.404, 732.847) .. controls (228.388, 743.2543) and (245.7957, 744.001) .. (261.627, 735.087);
			\draw (197.435, 738.371) .. controls (224.2723, 723.5677) and (250.4063, 725.2853) .. (275.837, 743.524);
			\draw (128, 768) .. controls (160, 768) and (192, 784) .. (218.6667, 792) .. controls (245.3333, 800) and (266.6667, 800) .. (282.6667, 794.6667) .. controls (298.6667, 789.3333) and (309.3333, 778.6667) .. (314.6667, 762.6667) .. controls (320, 746.6667) and (320, 725.3333) .. (309.3333, 709.3333) .. controls (298.6667, 693.3333) and (277.3333, 682.6667) .. (261.3333, 677.3333) .. controls (245.3333, 672) and (234.6667, 672) .. (213.3333, 680) .. controls (192, 688) and (160, 704) .. (128, 704);
			\node [BrickRed] at (262.0913, 693.3805) {\scalebox{.5}{$\bullet$}};
			\node [BrickRed] at (257.3645, 720.38) {\scalebox{.5}{$\bullet$}};
		\end{scope}	
	\end{tikzpicture}
	\subcaption{The combinatorial Teichmüller space of a one-holed torus. It has infinitely many cells 2D cells and 1D cells, corresponding to the different embeddings of the two ribbon graphs of type $(1,1)$.}
	\label{fig:comb:Teich:torus}
	\end{subfigure}
	\hfill%
	\begin{subfigure}[t]{.5\textwidth}
	\centering
	\begin{tikzpicture}[scale=1]
		\draw [black,path fading=west] (-30:2) -- (30:1);
		\draw [black,path fading=east] (90:2) -- (30:1);
		\draw (90:2) -- (-150:2) -- (-30:2);

		\draw [BrickRed,->] ($(0,-1.5) + (120:.2)$) arc (120:420:.2);
		\node [BrickRed] at (0,-2) {\small$\ZZ/6\ZZ$};

		\draw [BurntOrange,->] ($(30:1.8) + (-120:.2)$) arc (-120:180:.2);
		\node [BurntOrange,thick] at (25:2.3) [above] {\small$\ZZ/4\ZZ$};

		\node [white] at (90:2) {$\bullet$};
		\node [white] at (-30:2) {$\bullet$};
		\node [white] at (-150:2) {$\bullet$};
		\node at (90:2) {$\circ$};
		\node at (-30:2) {$\circ$};
		\node at (-150:2) {$\circ$};

		\begin{scope}[scale=.2]
			\draw [thick,BrickRed] (2,0) to[out=0,in=-90] (2.5,.5) to[out=90,in=0] (2,1) to[out=180,in=0] (0,0) -- (-2,0);

			\draw [line width=5pt,white] ($(0,0) + (30:2cm and 1.4cm)$) arc (30:60:2cm and 1.4cm);

			\draw [BrickRed,thick] (0,0) ellipse (2cm and 1.4cm);
			\node [BrickRed] at (2,0) {\scalebox{.5}{$\bullet$}};
			\node [BrickRed] at (-2,0) {\scalebox{.5}{$\bullet$}};	
		\end{scope}
		\begin{scope}[scale=.15,xshift=5cm,yshift=4.5cm]
			\draw [thick,BurntOrange] (0,0) ellipse (2cm and 1.4cm);

			\draw [line width=5pt,white] ($(1.4,-1.4) + (0:1.4cm and 2cm)$) arc (0:120:1.4cm and 2cm);

			\draw [thick,BurntOrange] (1.4,-1.4) ellipse (1.4cm and 2cm);
			\node [BurntOrange] at (0,-1.4) {\scalebox{.5}{$\bullet$}};
		\end{scope}

		\node at (2.5,0) {$\cong$};

		\begin{scope}[xshift=3.5cm]
			\draw [decoration={markings,mark=at position 0.5 with {\arrow{>}}},postaction={decorate}] (0,0) -- (90:2);
			\draw [decoration={markings,mark=at position 0.5 with {\arrow{>}}},postaction={decorate}] (0,0) -- (-30:2);
			\draw [decoration={markings,mark=at position 0.5 with {\arrow{>>}}},postaction={decorate}] (30:1) -- (90:2);
			\draw [decoration={markings,mark=at position 0.5 with {\arrow{>>}}},postaction={decorate}] (30:1) -- (-30:2);

			\node [white] at (90:2) {$\bullet$};
			\node [white] at (-30:2) {$\bullet$};
			\node at (90:2) {$\circ$};
			\node at (-30:2) {$\circ$};

			\node [BrickRed] at (0,0) {$\bullet$};
			\node [BurntOrange] at (30:1) {$\bullet$};
		\end{scope}
	\end{tikzpicture}
	\subcaption{
		The combinatorial moduli space of type $(1,1)$. It has two orbicells, corresponding to the two ribbon graphs of type $(1,1)$. Every point has stabiliser $\ZZ/2\ZZ$ given by the elliptic involution, except for the $3$-valent metric ribbon graph with all edge lengths being equal (with $\ZZ/6\ZZ$-stabiliser), and the $4$-valent metric ribbon graph with all edge lengths being equal (with $\ZZ/4\ZZ$-stabiliser).
	}
	\label{fig:comb:mod:torus}
	\end{subfigure}
	\caption{The combinatorial Teichmüller space of a one-holed torus (left), and the corresponding moduli space.}
	\label{fig:comb:torus}
\end{figure}

\subsection{The symplectic structure, the length function, and the integration formula}
\label{subsec:length}
As showed by Kontsevich \cite{Kon92}, the moduli space $\cM^{\textup{comb}}_{g,n}(L)$ carries a natural symplectic form $\omega$ that we call Kontsevich form. The associated cohomology class has deep connections with the moduli space of Riemann surfaces, as recalled in \Cref{subsec:int:numbers}.

Denote by $dG \coloneqq \frac{\omega^{3g-3+n}}{(3g-3+n)!}$ the volume form associated to $\omega$. The symplectic volumes
\begin{equation}
	V_{g,n}(L)
	\coloneqq
	\int_{\cM^{\textup{comb}}_{g,n}(L)} dG
\end{equation}
are finite, and they have been computed recursively by Kontsevich using matrix model techniques. It is worth mentioning that the symplectic volume form is proportional to the Lebesgue measure defined on each top-dimensional cell forming the combinatorial moduli space, and as such the volumes coincide with the asymptotic counting of metric ribbon graphs with edge lengths in $\frac{1}{k} \ZZ$ as $k \to \infty$. A geometric proof of Kontsevich's recursion, based on the existence of Fenchel--Nielsen coordinates and a Mirzakhani-type recursion for the constant function $1$ on the combinatorial Teichmüller space, can be found in \cite{ABCGLW}.

Let us recall the notion of combinatorial Fenchel--Nielsen coordinates. Fix an embedded metric ribbon graph $\GG \in \cT^{\textup{comb}}_{\Sigma}(L)$ and a free homotopy class $\gamma$ of a (non-null) simple closed curve in $\Sigma$. Consider the unique representative of $\gamma$ that has been homotoped to the embedded graph as a non-backtracking edge-path. We will refer to it as the geodesic representative. The geodesic length $\ell_{\GG}(\gamma)$ is defined by adding up the lengths of the edges visited by its geodesic representative. Thus, every free homotopy class $\gamma$ of non-trivial simple closed curves on $\Sigma$ defines a function $\ell(\gamma) \colon \cT^{\textup{comb}}_{\Sigma}(L) \to \RR_{>0}$ that assigns to $\GG$ the geodesic length $\ell_{\GG}(\gamma)$.

Consider now a pants decomposition $\mathcal{P}$ of $\Sigma$, that is a collection $(\gamma_m)_{m = 1}^{3g-3+n}$ of simple closed curves that cut $\Sigma$ into a disjoint union of pairs of pants. For a given embedded metric ribbon graph $\GG \in \cT^{\textup{comb}}_{\Sigma}(L)$, we can assign to each curve in $\mathcal{P}$ the data of two real numbers in $\RR_{>0} \times \RR$: the length $\ell_{\GG}(\gamma_m)$ and the twist $\tau_{\GG}(\gamma_m)$ of the gluing. Thus, we get a map
\begin{equation}
	\cT^{\textup{comb}}_{\Sigma}(L) \longrightarrow (\RR_{>0} \times \RR)^{\mathcal{P}},
	\qquad
	\GG \longmapsto \bigl( \ell_{\GG}(\gamma_m), \tau_{\GG}(\gamma_m) \bigr)_{m=1}^{3g-3+n} 
\end{equation}
called the combinatorial Fenchel--Nielsen coordinates. They are the analogue of Fenchel--Nielsen coordinates in hyperbolic geometry, and Dehn--Thurston coordinates in the theory of measured foliations. Lengths and twists form a global coordinate system on the combinatorial Teichmüller space that is Darboux for the Kontsevich symplectic form (canonically lifted to a mapping class group invariant form on the moduli space). This is the combinatorial analogue of Wolpert's magic formula \cite{Wol85} in the hyperbolic setting.

\begin{theorem}[Combinatorial Wolpert's formula {\cite{ABCGLW}}]
	For every pants decomposition, the Kontsevich form on $\cT^{\textup{comb}}_{\Sigma}(L)$ is canonically given by
	\begin{equation}
		\omega = \sum_{m=1}^{3g-3+n} d\ell_m \wedge d\tau_m .
	\end{equation}
\end{theorem}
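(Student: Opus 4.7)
The plan is to imitate Wolpert's classical proof of the magic formula for the Weil--Petersson symplectic form. The main step is to establish the \emph{duality identity}
\begin{equation}
\iota_{\partial/\partial\tau_m} \omega \;=\; d\ell_m
\end{equation}
for each $m = 1, \dots, 3g-3+n$, where $\partial/\partial\tau_m$ denotes the twist vector field along $\gamma_m$. Granted this, one immediately obtains $\{\ell_m,\tau_{m'}\} = \delta_{mm'}$; moreover $\{\ell_m,\ell_{m'}\} = 0$ since $\ell_m$ is invariant under twisting along any curve of $\mathcal{P}$ disjoint from $\gamma_m$; and $\{\tau_m,\tau_{m'}\}=0$ by flatness of the twist action, i.e.\ the commutativity of twist flows along disjoint curves. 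Together these three Poisson relations force $\omega = \sum_m d\ell_m \wedge d\tau_m$ in the $(\ell_m,\tau_m)$ coordinates.

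To prove the duality identity, I would work locally in a top-dimensional cell labelled by a trivalent embedded ribbon graph $\mathbf{G}$. On such a cell, Kontsevich's form is an explicit bilinear $2$-form in the edge-length coordinates, built from a sum over faces, each contribution encoded by the cyclic ordering of edges along that face. The infinitesimal twist along $\gamma_m$ can be described combinatorially: it modifies the lengths of precisely those edges traversed by the geodesic representative of $\gamma_m$, in a piecewise-linear fashion whose coefficients are determined by the side on which each edge lies with respect to $\gamma_m$. Contracting this explicit infinitesimal deformation into $\omega$ and simplifying the resulting sum should reduce, by a telescoping identity along the non-backtracking edge-path representing $\gamma_m$, to exactly $d\ell_m$.

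The main obstacle is the interface between cells: as one twists, the geodesic representative may sweep across a vertex, forcing a jump from one trivalent cell to an adjacent one via an edge collapse and re-expansion. I would verify that the duality identity extends continuously across these codimension-one walls, either by a direct computation in local coordinates adapted to the cell transition (a well-definedness check already implicit in Kontsevich's original construction of $\omega$), or more conceptually by cutting $\Sigma$ along the pants decomposition $\mathcal{P}$. The latter reduces the global statement to a gluing statement for the Kontsevich form along pants curves plus the case of an individual pair of pants, for which $\cT^{\textup{comb}}_{0,3}(L)$ is a point and the relations hold trivially; all the geometric content is then concentrated at the gluing curves, where the duality identity is a local computation in a tubular neighborhood of $\gamma_m$.
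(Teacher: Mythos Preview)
The paper does not contain a proof of this theorem: it is stated as a cited result from \cite{ABCGLW}, with no argument given. There is therefore nothing in the present paper to compare your proposal against.

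That said, your outline is broadly the right strategy and is indeed the one pursued in \cite{ABCGLW}: establish the duality $\iota_{\partial/\partial\tau_m}\omega = d\ell_m$ by an explicit edge-length computation on top-dimensional cells, and handle cell transitions. One caution: the reduction ``$\{\ell_m,\ell_{m'}\}=0$, $\{\ell_m,\tau_{m'}\}=\delta_{mm'}$, $\{\tau_m,\tau_{m'}\}=0$ therefore $\omega=\sum d\ell_m\wedge d\tau_m$'' presupposes that the $(\ell_m,\tau_m)$ are genuine global coordinates on $\cT^{\comb}_\Sigma(L)$, which is itself a nontrivial theorem in this setting (the combinatorial Fenchel--Nielsen coordinates). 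Your sketch silently assumes this; in a complete proof it must be established separately, and in \cite{ABCGLW} the gluing/cutting description of the combinatorial Teichm\"uller space is developed precisely for this purpose. The telescoping computation you allude to for the duality identity is also more delicate than in the hyperbolic case, because the twist flow is only piecewise linear and the geodesic representative of $\gamma_m$ can change combinatorial type; the actual argument requires a careful local analysis near the switches.
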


The above formula allows for the integration of natural geometric functions defined on the combinatorial moduli space. This fact, which is the combinatorial analogue of Mirzakhani's integration formula \cite{Mir07a}, is one of the main ingredients in the geometric proof of the volume recursion. In order to state the formula, let us introduce some notation.

A stable graph consists of the data $\Gamma = ( \ms{V}(\Gamma), \ms{H}(\Gamma), (g_v)_{v \in \ms{V}(\Gamma)}, \nu, \iota )$ satisfying the following properties.
\begin{enumerate}
	\item
	$\ms{V}(\Gamma)$ is the set of vertices, equipped with the assignment of non-negative integers $(g_v)_{v \in \ms{V}(\Gamma)}$ called the genus decoration.
	
	\item
	$\ms{H}(\Gamma)$ is the set of half-edges, the map $\nu \colon \ms{H}(\Gamma) \to \ms{V}(\Gamma)$ associates to each half-edge the vertex it is incident to, and $\iota \colon \ms{H}(\Gamma) \to \ms{H}(\Gamma)$ is an involution that pairs half-edges together.

	\item
	The set of $2$-cycles of $\iota$ is the set of edges, denoted $\ms{E}(\Gamma)$ (self-loops are permitted).
	
	\item
	The set of $1$-cycles (i.e.\ fixed points) of $\iota$ is the set of leaves, denoted $\ms{\Lambda}(\Gamma)$. We require that leaves are labelled: there is a bijection $\ms{\Lambda}(\Gamma) \to \{1,\dots,n\}$, where $n \coloneqq |\ms{\Lambda}(\Gamma)|$.
	
	\item
	The pair $( \ms{V}(\Gamma), \ms{E}(\Gamma) )$ defines a connected graph.
	
	\item
	If $v$ is a vertex, denote by $n_v \coloneqq |\nu^{-1}(v)|$ its valency. We require that for each vertex $v$, the stability condition $2g_v - 2 + n_v > 0$ holds.
\end{enumerate}
For a given stable graph $\Gamma$, define its genus as
\begin{equation}
	g(\Gamma) \coloneqq \sum_{v \in \ms{V}(\Gamma)} g(v) + h^1(\Gamma),
\end{equation}
where $h^1(\Gamma)$ is the first Betti number of $\Gamma$. We denote by $\ms{G}_{g,n}$ the set of stable graphs of genus $g$ with $n$ leaves. An automorphism of $\Gamma$ consists of bijections of the sets $\ms{V}(\Gamma)$ and $\ms{H}(\Gamma)$ which leave invariant the structures $(g)_{v \in \ms{V}(\Gamma)}$, $\nu$, $\iota$, and the leaves labelling. We denote by $\Aut(\Gamma)$ the automorphism group of $\Gamma$.

Stable graphs naturally appear as mapping class group orbits of primitive multicurves. Let $\gamma = (\gamma_1,\dots,\gamma_r)$ be an ordered primitive multicurve, that is an $r$-tuple of free homotopic classes of simple closed curves on $\Sigma$ that are non-null, non-peripheral, and distinct. Denote by $\Gamma$ the mapping class group orbit $\MCG_{\Sigma} \cdot \gamma$. Then $\Gamma$ is identified with a stable graph with $r$ labelled edges, where the vertices and the genus decoration correspond to the connected components of $\Sigma \setminus \gamma$, the $s$-th edge corresponds to the curve $\gamma_s$, and the $i$-th leaf corresponds to the boundary component $\partial_i \Sigma$ (see \Cref{fig:stable:graph} for an example). Notice that, in contrast to the previous definition, stable graphs have now labelled edges.

For a given function $F \colon \RR_{>0}^n \times \RR_{>0}^r \to \RR$, define $F_{\Gamma} \colon \cT^{\textup{comb}}_{\Sigma}(L) \to \RR$ as
\begin{equation}
	F_{\Gamma}(\GG)
	\coloneqq
	 \sum_{ \alpha \in \Gamma }
		F\bigl( L,\ell_{\GG}(\alpha) \bigr) ,
\end{equation}
where $\ell_{\GG}(\alpha) = (\ell_{\GG}(\alpha_1),\dots,\ell_{\GG}(\alpha_r))$. The function $F_{\Gamma}$ descends naturally to a function on the moduli space $\cM^{\textup{comb}}_{g,n}(L)$, that we denote with the same symbol. Its integral is given by the following formula.

\begin{figure}
	\centering
	\begin{tikzpicture}[x=1pt,y=1pt,scale=.7]
		\draw[ForestGreen, thick] (123.0986, 715.8997) .. controls (133.4315, 711.7587) and (148.7158, 727.8794) .. (155.4745, 747.9229) .. controls (162.2332, 767.9664) and (160.4664, 791.9328) .. (142.596, 791.6668);
		\draw[ForestGreen, thick, opacity=.5] (142.596, 791.6668) .. controls (128.7207, 791.8959) and (122.3604, 773.948) .. (118.6818, 755.1069) .. controls (115.0033, 736.2659) and (114.0066, 716.5318) .. (123.0986, 715.8997);
		\draw[ForestGreen, thick] (125.2051, 647.9859) .. controls (133.4517, 648.2265) and (136.7259, 658.1132) .. (136.5737, 666.7475) .. controls (136.4216, 675.3817) and (132.8431, 682.7634) .. (125.9585, 683.178);
		\draw[ForestGreen, thick, opacity=.5] (125.9585, 683.178) .. controls (119.2801, 684.365) and (115.6401, 676.1825) .. (115.1338, 666.9978) .. controls (114.6275, 657.8131) and (117.255, 647.6263) .. (125.2051, 647.9859);
		\draw[ForestGreen, thick] (196.7013, 747.6512) .. controls (202.3132, 746.9713) and (205.1566, 761.4856) .. (204.0392, 773.3755) .. controls (202.9219, 785.2654) and (197.8438, 794.5309) .. (189.7564, 793.7345);
		\draw[ForestGreen, thick, opacity=.5] (189.7564, 793.7345) .. controls (183.5873, 793.214) and (181.7937, 782.607) .. (183.428, 770.7971) .. controls (185.0623, 758.9872) and (190.1245, 745.9743) .. (196.7013, 747.6512);

		\draw [thick,fill,fill opacity=.05] (34.6667, 784) .. controls (37.3333, 789.3333) and (42.6667, 794.6667) .. (48, 797.3333) .. controls (53.3333, 800) and (58.6667, 800) .. (61.3333, 797.3333) .. controls (64, 794.6667) and (64, 789.3333) .. (61.3333, 784) .. controls (58.6667, 778.6667) and (53.3333, 773.3333) .. (48, 770.6667) .. controls (42.6667, 768) and (37.3333, 768) .. (34.6667, 770.6667) .. controls (32, 773.3333) and (32, 778.6667) .. cycle;
		\draw [thick,fill,fill opacity=.05] (226.6667, 784) .. controls (229.3333, 778.6667) and (234.6667, 773.3333) .. (240, 770.6667) .. controls (245.3333, 768) and (250.6667, 768) .. (253.3333, 770.6667) .. controls (256, 773.3333) and (256, 778.6667) .. (253.3333, 784) .. controls (250.6667, 789.3333) and (245.3333, 794.6667) .. (240, 797.3333) .. controls (234.6667, 800) and (229.3333, 800) .. (226.6667, 797.3333) .. controls (224, 794.6667) and (224, 789.3333) .. cycle;
		\draw [thick,fill,fill opacity=.05] (34.6667, 656) .. controls (37.3333, 650.6667) and (42.6667, 645.3333) .. (48, 642.6667) .. controls (53.3333, 640) and (58.6667, 640) .. (61.3333, 642.6667) .. controls (64, 645.3333) and (64, 650.6667) .. (61.3333, 656) .. controls (58.6667, 661.3333) and (53.3333, 666.6667) .. (48, 669.3333) .. controls (42.6667, 672) and (37.3333, 672) .. (34.6667, 669.3333) .. controls (32, 666.6667) and (32, 661.3333) .. cycle;
		\draw [thick,fill,fill opacity=.05] (240, 642.6667) .. controls (245.3333, 645.3333) and (250.6667, 650.6667) .. (253.3333, 656) .. controls (256, 661.3333) and (256, 666.6667) .. (253.3333, 669.3333) .. controls (250.6667, 672) and (245.3333, 672) .. (240, 669.3333) .. controls (234.6667, 666.6667) and (229.3333, 661.3333) .. (226.6667, 656) .. controls (224, 650.6667) and (224, 645.3333) .. (226.6667, 642.6667) .. controls (229.3333, 640) and (234.6667, 640) .. cycle;

		\draw [thick] (55.9804, 799.3293) .. controls (114.6601, 789.1098) and (173.3518, 789.11) .. (232.0555, 799.3301);
		\draw [thick] (56.1994, 640.6785) .. controls (114.7331, 650.8928) and (173.3342, 650.8904) .. (232.0025, 640.6712);
		\draw [thick] (254.7771, 667.0311) .. controls (244.9257, 702.3437) and (244.9417, 737.7005) .. (254.8251, 773.1016);
		\draw [thick] (33.1768, 666.9038) .. controls (48.3923, 702.3013) and (48.3724, 737.7574) .. (33.1171, 773.2723);
		\draw [thick] (72, 760) .. controls (80, 740) and (112, 756) .. (104, 780);
		\draw [thick] (184, 752) .. controls (184, 724) and (204, 724) .. (216, 740);
		\draw [thick] (84.0585, 750.9585) .. controls (76, 768) and (92, 776) .. (105.2047, 770.7132);
		\draw [thick] (184.9058, 742.6833) .. controls (196, 752) and (212, 748) .. (209.224, 733.2131);
		\draw [thick] (108, 728) .. controls (88, 708) and (94, 694) .. (111, 687) .. controls (128, 680) and (156, 680) .. (160, 700);
		\draw [thick] (96.9318, 712.4965) .. controls (116, 720) and (128, 716) .. (136, 710) .. controls (144, 704) and (148, 696) .. (146.2236, 684.8747);

		\node at (38, 784) [above left] {$\partial_1 \Sigma$};
		\node at (250, 784) [above right] {$\partial_2 \Sigma$};
		\node at (250, 656) [below right] {$\partial_3 \Sigma$};
		\node at (38, 656) [below left] {$\partial_4 \Sigma$};
		\node [ForestGreen] at (142.596, 791.667) [above] {$\gamma_1$};
		\node [ForestGreen] at (125.205, 647.986) [below] {$\gamma_2$};
		\node [ForestGreen] at (189.756, 793.735) [above] {$\gamma_3$};

		\begin{scope}[xshift=2cm]
			\draw [thick] (352, 720) -- (323, 739);
			\draw [thick] (352, 720) -- (323, 701);
			\draw [thick] (464, 720) -- (493, 739);
			\draw [thick] (464, 720) -- (493, 701);

			\draw [thick,ForestGreen] (352, 720) .. controls (389.3333, 740.6667) and (426.6667, 740.6667) .. (464, 720);
			\draw [thick,ForestGreen] (352, 720) .. controls (389.3333, 699.3333) and (426.6667, 699.3333) .. (464, 720);
			\draw [thick,ForestGreen] (464, 720) .. controls (448, 736) and (456, 754) .. (464, 754) .. controls (472, 754) and (480, 736) .. (464, 720);
			
			\filldraw [thick,fill=white] (352, 720) circle[radius=10];
			\filldraw [thick,fill=white] (464, 720) circle[radius=10];

			\node at (323, 739) [above left] {$1$};
			\node at (493, 739) [above right] {$2$};
			\node at (493, 701) [below right] {$3$};
			\node at (323, 701) [below left] {$4$};
			\node [ForestGreen] at (408, 738) [above] {$\gamma_1$};
			\node [ForestGreen] at (408, 702) [below] {$\gamma_2$};
			\node [ForestGreen] at (464, 754) [above] {$\gamma_3$};
			\node at (352, 720) {$1$};
			\node at (464, 720) {$0$};
		\end{scope}
	\end{tikzpicture}
	\caption{The stable graph corresponding to an ordered tuple of curves.}
	\label{fig:stable:graph}
\end{figure}
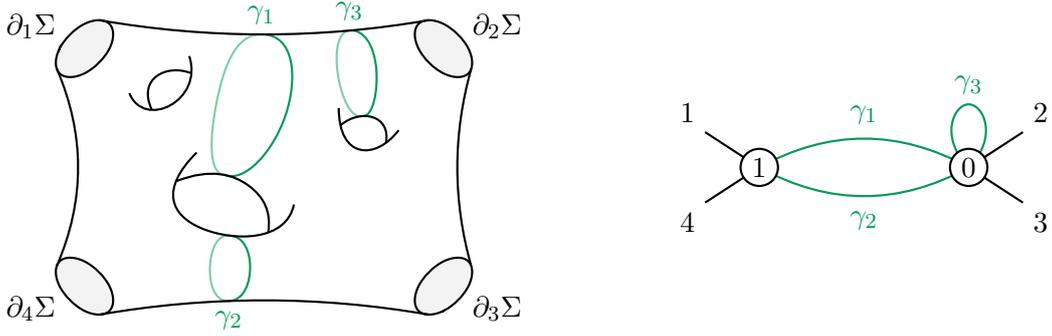

\begin{theorem}[Integration formula {\cite{ABCGLW}}] \label{thm:int:formula}
	Under suitable integrability conditions, the integral of $F_{\Gamma}$ over the combinatorial moduli space is given by
	\begin{equation} \label{eq:intFor}
		\int_{\cM^{\textup{comb}}_{g,n}(L)} F_{\Gamma} \, dG
		=
		\frac{1}{|\Aut(\Gamma)|} \int_{\RR_{>0}^r} F(L,\ell) \, V_{\Gamma}(L,\ell) \prod_{s=1}^r \ell_s \, d\ell_s ,
	\end{equation}
	where $V_{\Gamma}(L,\ell) \coloneqq \prod_{v \in \ms{V}(\Gamma)} V_{g_v,n_v}(L_v, \ell_v)$. Here $L_v$ (resp. $\ell_v$) denotes the tuple of lengths associated to the leaves (resp. half-edges that are not leaves) attached to $v$.
\end{theorem}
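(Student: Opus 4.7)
The plan is to mimic Mirzakhani's proof of her integration formula in the hyperbolic setting, adapted to the combinatorial framework by using the combinatorial Wolpert formula stated just above. The argument breaks into an unfolding step, a change to Fenchel--Nielsen coordinates, an analysis of the stabiliser of a fixed multicurve, and an iterated integration.

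\textbf{Unfolding.} Fix once and for all an ordered primitive multicurve $\gamma = (\gamma_1, \dots, \gamma_r)$ whose $\MCG_\Sigma$-orbit is $\Gamma$. Let $\mathrm{Stab}(\gamma) \subseteq \MCG_\Sigma$ denote the subgroup fixing each $\gamma_s$ setwise. Since $F_\Gamma(\GG)$ is by definition a sum over the $\MCG_\Sigma$-orbit of $\gamma$, and using $\ell_\GG(\phi\cdot\gamma) = \ell_{\phi^{-1}\cdot\GG}(\gamma)$, the standard unfolding identity yields
\begin{equation*}
	\int_{\cM^{\textup{comb}}_{g,n}(L)} F_\Gamma \, dG
	\;=\;
	\int_{\cT^{\textup{comb}}_\Sigma(L)/\mathrm{Stab}(\gamma)} F\bigl(L, \ell_\GG(\gamma)\bigr) \, dG,
\end{equation*}
because a fundamental domain for $\mathrm{Stab}(\gamma)$ in $\cT^{\textup{comb}}_\Sigma(L)$ collects exactly one representative of each $\MCG_\Sigma$-translate of $\gamma$.

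\textbf{Fenchel--Nielsen coordinates and structure of the stabiliser.} Extend $\gamma$ to a combinatorial pants decomposition $\mathcal{P} = (\gamma_1, \dots, \gamma_{3g-3+n})$ of $\Sigma$. By the combinatorial Wolpert formula, the Liouville volume reads $dG = \prod_{m=1}^{3g-3+n} d\ell_m \, d\tau_m$ in the associated coordinates, and the integrand depends only on $(\ell_1,\dots,\ell_r)$. Cutting $\Sigma$ along $\gamma$ produces a possibly disconnected surface $\Sigma_\gamma$ whose components are in bijection with $\ms{V}(\Gamma)$, the $v$-th component having genus $g_v$ and $n_v$ boundary components of lengths $(L_v, \ell_v)$. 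The stabiliser fits into a short exact sequence
\begin{equation*}
	1 \longrightarrow \ZZ^r \rtimes \MCG_{\Sigma_\gamma} \longrightarrow \mathrm{Stab}(\gamma) \longrightarrow \Aut(\Gamma) \longrightarrow 1,
\end{equation*}
where the $\ZZ^r$ is generated by the Dehn twists along the $\gamma_s$ and acts by $\tau_s \mapsto \tau_s + \ell_s$, the factor $\MCG_{\Sigma_\gamma}$ acts on the remaining pants decomposition coordinates and fixes $(\ell_s, \tau_s)_{s=1}^r$, while the quotient $\Aut(\Gamma)$ implements the combinatorial symmetries of $\Gamma$ (permutations of edges and vertices and swaps of half-edges at each vertex).

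\textbf{Iterated integration.} Take the quotient in stages. Quotienting by the $\ZZ^r$ of Dehn twists integrates each $\tau_s$ over $\RR/\ell_s\ZZ$ and produces the factor $\prod_{s=1}^r \ell_s$. Quotienting by $\MCG_{\Sigma_\gamma}$ and integrating the remaining lengths and twists against $\prod_{m>r} d\ell_m \, d\tau_m$ produces, by definition of the symplectic volumes and Fubini, the factor
\begin{equation*}
	V_\Gamma(L,\ell) \;=\; \prod_{v \in \ms{V}(\Gamma)} V_{g_v,n_v}(L_v, \ell_v).
\end{equation*}
Finally, the residual finite quotient by $\Aut(\Gamma)$ contributes the overall prefactor $1/|\Aut(\Gamma)|$, assembling to \Cref{eq:intFor}.

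\textbf{Main obstacle.} The delicate point is the identification of $\mathrm{Stab}(\gamma)$ and in particular the bookkeeping of $\Aut(\Gamma)$: automorphisms swapping the two sides of a non-separating $\gamma_s$, or permuting topologically equivalent edges or vertices, must be quotiented only once and must not be absorbed a second time into $\MCG_{\Sigma_\gamma}$. This is the standard subtlety shared with any Mirzakhani-type integration formula; for the combinatorial Teichmüller space, the relevant covering theory and the Darboux property of the Fenchel--Nielsen chart were established in \cite{ABCGLW}, which is what the present paper invokes.
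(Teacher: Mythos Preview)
The paper does not give its own proof of this theorem: it is quoted verbatim as a result of \cite{ABCGLW} and used as a black box throughout. So there is nothing in the present paper to compare your argument against.

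That said, your sketch is essentially the standard Mirzakhani-type argument transported to the combinatorial setting, and it is the approach one expects \cite{ABCGLW} to take: unfold the orbit-sum to an integral over $\cT^{\comb}_\Sigma(L)/\mathrm{Stab}(\gamma)$, use the combinatorial Wolpert formula to write the measure in Fenchel--Nielsen coordinates, and then factor the integral using the structure of the stabiliser. Two small points of care: the extension $1 \to \ZZ^r \rtimes \MCG_{\Sigma_\gamma} \to \mathrm{Stab}(\gamma) \to \Aut(\Gamma) \to 1$ is morally right but the middle group is more precisely the image of $\MCG_{\Sigma\setminus\gamma}$ in $\MCG_\Sigma$ (one should be mindful of half-twists and of components of $\Sigma_\gamma$ with extra symmetries), and the passage from ``quotient by $\MCG_{\Sigma_\gamma}$'' to ``product of volumes $V_{g_v,n_v}$'' implicitly uses that combinatorial Fenchel--Nielsen coordinates restrict compatibly to the pieces, which is part of the content of \cite{ABCGLW}. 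You correctly flag the $\Aut(\Gamma)$ bookkeeping as the genuine subtlety.
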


\subsection{Connection with intersection numbers and Aggarwal's asymptotic formula}
\label{subsec:int:numbers}
A notable aspect of the combinatorial moduli space $\cM^{\textup{comb}}_{g,n}(L)$ is its homeomorphism to the moduli space of Riemann surfaces $\cM_{g,n}$ (both considered as real topological orbifolds). This fact was proved by Harer \cite{Har86} using meromorphic differentials (based on the seminal works of Jenkins and Strebel and unpublished works of Thurston and Mumford) and by Penner and Bowditch--Epstein \cite{Pen87,BE88} using hyperbolic geometry. For instance, the moduli space $\cM^{\textup{comb}}_{1,1}(L)$ in \Cref{fig:comb:mod:torus} is nothing but the moduli of elliptic curves $\cM_{1,1} = [\SL(2,\ZZ) \backslash \mathbb{H}^2]$.

Consequently, any topological invariant of $\cM^{\textup{comb}}_{g,n}(L)$ can be translated into a corresponding topological invariant of $\cM_{g,n}$ and vice versa. As mentioned in the introduction, Harer and Zagier \cite{HZ86} utilise this fact to compute the Euler characteristic of the moduli space of curves, and Kontsevich employs it to prove Witten's conjecture \cite{Wit91,Kon92}. The former is a consequence of the following result due to Kontsevich.

\begin{theorem}[{Symplectic volumes as intersection numbers \cite{Kon92}}] \label{thm:Kont}
	The symplectic volumes satisfy
	\begin{equation}
		V_{g,n}(L)
		=
		\sum_{\substack{(d_1,\dots,d_n) \in \ZZ_{\geq 0}^k \\ |d| = 3g-3+n}}
			\braket{\tau_{d_1} \cdots \tau_{d_n}}_{g,n}
			\prod_{i=1}^n \frac{L_i^{2d_i}}{2^{d_i} d_i!} ,
	\end{equation}
	where
	\begin{equation}
		\langle \tau_{d_1} \cdots \tau_{d_n} \rangle_{g,n}
		\coloneqq
		\int_{\overline{\cM}_{g,n}} \psi_1^{d_1} \cdots \psi_n^{d_n}
	\end{equation}
	are the Witten--Kontsevich intersection numbers over the Deligne--Mumford compactification of the moduli space of Riemann surfaces. Here $\psi_i \coloneqq c_1(\mathcal{L}_i) \in H^2(\overline{\cM}_{g,n},\QQ)$ is the first Chern class of the $i$-th tautological line bundle $\mathcal{L}_i$, i.e.\ the line bundle whose fibre over $[C,p_1,\dots,p_n] \in \overline{\cM}_{g,n}$ is the cotangent line at $p_i$ of $C$.
\end{theorem}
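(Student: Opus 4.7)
The plan is to express the Kontsevich symplectic form $\omega$ on $\cM^{\textup{comb}}_{g,n}(L)$ as a cohomological combination of $\psi$-classes and then integrate its top exterior power term by term. The bridge is the Strebel (equivalently Harer--Bowditch--Epstein--Penner) homeomorphism $\cM^{\textup{comb}}_{g,n}(L) \simeq \cM_{g,n}$, valid for any $L \in \RR_{>0}^n$, which sends a pointed Riemann surface to the critical graph of its Jenkins--Strebel quadratic differential whose horizontal trajectories are closed and foliate $n$ cylinders of circumferences $L_1,\dots,L_n$ around the punctures. Under this identification the parameter $L_i$ is naturally attached to the $i$-th marked point, so both sides of the desired formula can be compared on the same space.

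The core of the proof is the cohomological identity
\begin{equation}
    [\omega] = \frac{1}{2}\sum_{i=1}^n L_i^2 \, \psi_i
\end{equation}
in $H^2(\cM^{\textup{comb}}_{g,n}(L),\QQ)$. I would prove it by constructing combinatorial representatives $\psi_i^{\comb}$ of the $\psi$-classes directly on the polytopal complex: on each top-dimensional cell, pick a distinguished edge along the $i$-th boundary and record the cyclic position relative to that edge as a piecewise-linear function valued in $\RR / L_i \ZZ$; its differential, paired with the edge-length $1$-forms, yields a closed $2$-form whose cohomology class is independent of the choices and matches $\psi_i$ under the Strebel homeomorphism. Writing $\omega$ in combinatorial Darboux coordinates on each cell (in the spirit of \Cref{subsec:length}) and collecting terms, one verifies that $\omega - \frac{1}{2}\sum_i L_i^2 \psi_i^{\comb}$ is exact, with a primitive glueing consistently across codimension-one faces so that the class vanishes globally.

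Once the identity is secured, the conclusion follows by the multinomial expansion
\begin{equation}
    \frac{\omega^{3g-3+n}}{(3g-3+n)!}
    =
    \sum_{\substack{(d_1,\dots,d_n) \in \ZZ_{\geq 0}^n \\ d_1 + \dots + d_n = 3g-3+n}}
    \left( \prod_{i=1}^n \frac{L_i^{2d_i}}{2^{d_i} d_i!} \right) \psi_1^{d_1} \cdots \psi_n^{d_n},
\end{equation}
integrated against the fundamental class of $\overline{\cM}_{g,n}$. The main obstacle is twofold: first, verifying the cohomological identity cell by cell on the polytopal complex while keeping track of gluings between cells; and second, arguing that integration over the non-compact $\cM^{\textup{comb}}_{g,n}(L)$ agrees with integration over $\overline{\cM}_{g,n}$, which requires controlling the behaviour of $\omega$ along the strata where edge lengths degenerate to zero. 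Both difficulties are handled in Kontsevich's original proof via the explicit local normal form of the Kontsevich $2$-form together with a boundary extension argument ensuring that no spurious contributions arise in the Stokes-type computation.
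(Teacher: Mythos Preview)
The paper does not actually prove this theorem: it is stated as a background result and attributed to Kontsevich \cite{Kon92}, with no proof given in the text. Your proposal is a faithful outline of Kontsevich's original argument---the cohomological identity $[\omega] = \tfrac{1}{2}\sum_i L_i^2 \psi_i$ followed by multinomial expansion of the top power---so there is nothing to compare against in the paper itself. As a sketch of the cited proof your outline is accurate, though the verification that the combinatorial $\psi_i^{\comb}$ represents the actual $\psi$-class and the boundary extension argument are substantial and would need the detailed cell-by-cell analysis you allude to (carried out in \cite{Kon92} and clarified in later expositions).
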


The main tool to estimate volumes in the large genus limit is an asymptotic formula for the Witten--Kontsevich intersection numbers. The formula was conjectured by Delecroix--Goujard--Zograf--Zorich based on numerical data \cite{DGZZ21}, and proved shortly after by Aggarwal \cite{Agg21} through a combinatorial analysis of the associated Virasoro constraints. An alternative proof was recently given in \cite{GY22} and in \cite{EGGGL} through a combinatorial and resurgent analysis of determinantal formulae respectively.

\begin{theorem}[{Large genus asymptotic of intersection numbers \cite{Agg21}}] \label{thm:Agg}
    For any stable $(g,n)$ with $n^2 < g/800$ and $(d_1, \dots, d_n) \in \ZZ_{\geq 0}^{n}$ with $|d| = 3g-3+n$, we have as $g \to \infty$
    \begin{equation} \label{eq:psi:sim}
        \braket{\!\braket{ \tau_{d_1} \cdots \tau_{d_n} }\!}_{g,n}
        \coloneqq
        \braket{ \tau_{d_1} \cdots \tau_{d_n} }_{g,n} \prod_{i=1}^n (2d_i+1)!!
        =
        \frac{(6g-5+2n)!!}{g! \, 24^g} \bigl( 1 + \lo(1) \bigr)
    \end{equation}
    uniformly in $n$ and $(d_1, \dots, d_n)$.
\end{theorem}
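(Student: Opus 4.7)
The natural approach is via the Virasoro constraints (equivalently, the DVV recursion) that determine all Witten--Kontsevich intersection numbers from the initial condition $\braket{\tau_0^3}_{0,3} = 1$. After normalizing by the double factorials, i.e.\ working with the $\braket{\!\braket{\cdots}\!}$ brackets, the recursion takes a particularly clean form: it expresses a typical numerator at type $(g,n)$ as a sum of three types of terms, namely ``forget'' contributions of the same genus $g$, a single self-gluing contribution at genus $g-1$, and stable-splitting contributions written as products $\braket{\!\braket{\cdots}\!}_{g_1,\cdot}\braket{\!\braket{\cdots}\!}_{g_2,\cdot}$ with $g_1+g_2=g$. The plan is to induct on $g$, comparing the actual ratio $R_{g,n}(d) \coloneqq \braket{\!\braket{\tau_{d_1}\cdots\tau_{d_n}}\!}_{g,n}/A_{g,n}$ to $1$, where $A_{g,n} \coloneqq (6g-5+2n)!!/(g!\,24^g)$ is the conjectured asymptotic.

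Direct comparison of coefficients shows that in the normalized DVV recursion the total weight carried by the self-gluing term matches exactly the ratio $A_{g,n}/A_{g-1,n+2}$ asymptotically as $g\to\infty$, which is the key numerical coincidence powering the whole induction. If one can further show that the ``forget'' contributions each add a relative correction of size $\bO(1/g)$ (with $\bO(n)$ such terms), and that the stable-splitting terms \emph{taken together} contribute $\lo(1)$ to the ratio, then the recursion reduces, modulo error, to $R_{g,n}(d) = R_{g-1,n+2}(d') + \lo(1)$, which together with a trivial base case yields $R_{g,n}(d) \to 1$ uniformly in the constraints.

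The main obstacle will be bounding the stable-splitting terms uniformly in $d$ and in $n$. Each splitting gives a product of two intersection numbers of smaller genus, summed over all ways to distribute the indices $d_1,\dots,d_n$ and the genus $g=g_1+g_2$; a crude product bound is sharp only for very unbalanced splittings, and the count of splittings is enormous. What one really needs is a ``super-multiplicativity'' inequality of the form $\braket{\!\braket{\cdots}\!}_{g_1,\cdot}\braket{\!\braket{\cdots}\!}_{g_2,\cdot} \lesssim \braket{\!\braket{\cdots}\!}_{g_1+g_2,\cdot}/P(g)$ for a suitable polynomial $P$, and this is itself a delicate inductive result exploiting the nonnegativity of the Witten--Kontsevich intersection numbers. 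The hypothesis $n^2 < g/800$ enters precisely here: it is the regime in which a two-parameter induction on $g$ and $n$ closes up with uniform constants, avoiding the blow-up of error terms that occurs once the number of punctures becomes comparable to $\sqrt{g}$.
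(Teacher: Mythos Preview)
The paper does not prove this statement at all: it is quoted verbatim as a result of Aggarwal \cite{Agg21} and used as a black box in the subsequent volume estimates. So there is no ``paper's own proof'' to compare against.

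That said, your sketch is an accurate high-level summary of Aggarwal's actual argument in \cite{Agg21}: work with the normalised brackets $\braket{\!\braket{\cdots}\!}$, run the DVV/Virasoro recursion, observe that the self-gluing term carries essentially all of the mass (matching the ratio $A_{g,n}/A_{g-1,n+2}$), and show by a two-parameter induction that the forget terms contribute $\bO(n/g)$ while the stable-splitting terms contribute $\lo(1)$ in total. You have also correctly identified the genuine difficulty, namely the uniform control of the splitting sum, and the role of the hypothesis $n^2 < g/800$ in making the induction close.

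What your proposal does not do is execute any of this. The ``super-multiplicativity'' inequality you gesture at is not a single clean statement but a web of interlocking estimates (monotonicity in the $d_i$, string/dilaton-type bounds, and a careful case analysis on the splitting), and proving them is the bulk of Aggarwal's paper. As written, your text is a correct strategic outline rather than a proof; if the goal were to supply a self-contained argument, substantial work would remain.
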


The large genus asymptotics for the symplectic volumes read
\begin{equation} \label{eq:V}
\begin{split}
	V_{g,n}(L)
	\sim
	\frac{(6g-5+2n)!!}{g! \, 24^g} \, [z^{6g-6+3n}] \prod_{i=1}^n \frac{\sinh(L_i z)}{L_i} ,
\end{split}
\end{equation}
where $[z^d] \, f(z)$ denotes the coefficient of $z^d$ in the Taylor expansion of $f(z)$ around $z = 0$. We will come back to this observation in the next section, when discussing estimates on Kontsevich volumes. 

The asymptotic formula \labelcref{eq:psi:sim} becomes false if $n$ grows too rapidly compared to $g$ (for instance, consider the exact formula $\langle \tau_{d_1} \cdots \tau_{d_n} \rangle_{0,n} = \binom{n-3}{d_1,\ldots,d_n}$). However, the following estimate always holds.

\begin{theorem}[{Uniform bound on intersection numbers \cite{Agg21}}] \label{thm:uniBd}
	For any stable $(g,n)$ and $(d_1, \dots, d_n) \in \ZZ_{\geq 0}^{n}$ with $|d| = 3g-3+n$, we have
	\begin{equation}
		\braket{\!\braket{ \tau_{d_1} \cdots \tau_{d_n} }\!}_{g,n}
		\leq
		\left( \frac{3}{2} \right)^{n-1} \frac{(6g-5+2n)!!}{g! \, 24^g} .
	\end{equation}
\end{theorem}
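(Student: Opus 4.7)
The approach is strong induction on the moduli dimension $3g - 3 + n$, driven by the Virasoro/DVV recursion for the Witten--Kontsevich intersection numbers. The normalization $\braket{\!\braket{\cdots}\!}_{g,n} \coloneqq \braket{\cdots}_{g,n} \prod_i(2d_i + 1)!!$ is designed precisely so that the DVV recursion expresses $\braket{\!\braket{\tau_{k+1}\prod_i\tau_{d_i}}\!}_{g,n+1}$ as a sum of three families, all with rational coefficients independent of the individual $d_i$'s: a \emph{cutting} family living on $\overline{\cM}_{g,n}$, a \emph{genus-drop} family on $\overline{\cM}_{g-1,n+2}$, and a \emph{splitting} family of products $\braket{\!\braket{\cdots}\!}_{g_1,n_1+1}\,\braket{\!\braket{\cdots}\!}_{g_2,n_2+1}$ with $g_1+g_2=g$ and $n_1+n_2=n$. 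This $d_i$-independence is what makes a uniform bound of the form $\braket{\!\braket{\cdots}\!}_{g,n} \leq C_{g,n}$ natural to propagate.

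Setting $A(g,n) \coloneqq (6g-5+2n)!!/(g!\, 24^g)$, I would first verify the bound $\braket{\!\braket{\cdots}\!}_{g,n} \leq (3/2)^{n-1}A(g,n)$ directly on the base cases $(g,n) \in \{(0,3),(1,1)\}$ and then induct. Plugging the inductive hypothesis into each DVV term and dividing the desired inequality by the target $(3/2)^n A(g,n+1)$ reduces the claim to a combinatorial inequality of the shape
\begin{equation*}
	\tfrac{2}{3}\, S_{\mathrm{cut}} + \tfrac{3}{2}\, S_{\mathrm{drop}} + S_{\mathrm{split}} \;\leq\; 1,
\end{equation*}
where $S_{\mathrm{cut}}, S_{\mathrm{drop}}, S_{\mathrm{split}}$ are sums of DVV coefficients weighted by the appropriate ratios of $A$-values. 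The factors $2/3$ and $3/2$ arise from the change in the exponent of $(3/2)$ as $n+1 \rightsquigarrow n$ and $n+1 \rightsquigarrow n+2$, while the splitting term picks up no such factor since $n_1 + n_2 = n$ exactly saturates the exponent budget. The leading contribution is $S_{\mathrm{cut}}$, which asymptotically matches $1$ consistently with the sharp estimate \labelcref{eq:psi:sim}, leaving just enough room for the subleading $S_{\mathrm{drop}}$ and $S_{\mathrm{split}}$.

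\textbf{Main obstacle.} The hardest part is $S_{\mathrm{split}}$, where the exponents of $(3/2)$ saturate and no slack is available from the induction itself. Concretely, one must establish
\begin{equation*}
	\sum_{\substack{g_1+g_2=g,\, I \sqcup J = [n] \\ a+b=k-1}} \frac{(2a+1)!!(2b+1)!!}{(2k+1)!!} \cdot \frac{A(g_1,|I|+1)\,A(g_2,|J|+1)}{A(g,n+1)} \;\leq\; c
\end{equation*}
with an explicit $c < 1$ small enough to absorb the cutting- and genus-drop contributions. This requires sharp ratios of double factorials together with a dichotomy: \emph{unbalanced} splittings (one of $g_1, g_2$ close to $0$ or $g$), which dominate the sum but can be evaluated through explicit low-genus input, versus \emph{balanced} splittings, whose total weight decays fast enough in $g$ to contribute only a negligible amount. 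Making this bookkeeping uniform in both $n$ and $(d_1, \dots, d_n)$ --- and in particular independent of how concentrated the $d_i$'s are --- is the technical heart of the argument; the overall factor $(3/2)^{n-1}$, as opposed to a sharper constant, is precisely the slack needed to close this inductive loop.
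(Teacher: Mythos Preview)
The paper does not contain a proof of this statement: \Cref{thm:uniBd} is quoted without proof from Aggarwal's work \cite{Agg21}, so there is nothing in the present paper to compare your proposal against.

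That said, your strategy is essentially the one Aggarwal uses in \cite{Agg21}: induction on $3g-3+n$ via the DVV recursion, exploiting that in the normalisation $\braket{\!\braket{\cdots}\!}$ the DVV coefficients become independent of the $d_i$'s, and then closing the induction by bounding the cut, genus-drop, and splitting contributions. Your identification of the splitting sum as the delicate term, and of the need to separate balanced from unbalanced splittings, is accurate. One caveat: as written your sketch leaves the actual numerical closure of the inequality $\tfrac{2}{3}S_{\mathrm{cut}} + \tfrac{3}{2}S_{\mathrm{drop}} + S_{\mathrm{split}} \leq 1$ as a black box; in Aggarwal's argument this requires explicit double-factorial identities and a careful treatment of the small-$(g,n)$ boundary cases, which is where the specific constant $3/2$ (rather than anything sharper) is forced. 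If you intend this as more than a roadmap, you would need to carry out that numerical verification.
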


\subsection{Random metric ribbon graphs and the method of moments}
\label{subsec:random:MRG}
As the combinatorial moduli space has a finite volume, we can turn it into a probability space. More precisely, given a measurable set $A \subseteq \cM_{g,n}^{\comb}(L)$, define
\begin{equation}
	\PP[A] \coloneqq \frac{1}{V_{g,n}(L)} \int_{A} dG.
\end{equation}
Given random variables $X_1,\dots,X_p \colon \cM_{g,n}^{\comb}(L) \to \RR$ and sets $E_1,\dots,E_p \subset \RR$, define
\begin{equation}
	\PP[X_1 \in E_1,\dots,X_p \in E_p] \coloneqq \PP\left[ \bigcap_{i=1}^p \Set{ G \in \cM_{g,n}^{\comb}(L) | X_i(G) \in E_i } \right].
\end{equation}
If $E_i = \{ x_i \}$ is a singleton, we simply denote the above quantity as $\PP[X_1 = x_1,\dots,X_p=x_p]$. For $p =1$, we refer to the function $\PP[X=x]$ as the probability mass function of $X$. We also define the expectation value of a random variable $X$ to be
\begin{equation}
	\EE[X] \coloneqq \frac{1}{V_{g,n}(L)} \int_{\cM_{g,n}^{\comb}(L)} X(G) \, dG.
\end{equation}
Often, we write $\EE[X(\bm{G}_{g,L})]$ where $\bm{G}_{g,L}$ is a random element in $\cM_{g,n}^{\comb}(L)$ to emphasise the dependence on the genus and the number/lengths of the boundary components.

As in \cite{MP19,JL23}, the main probabilistic tool used in this paper is the method of moments, which we now recall. Let $(\Omega, \cB, \PP)$ be a probability space, and let $X \colon \Omega \to \ZZ_{\geq 0}$ be an integer-valued random variable. For any integer $r$, define
\begin{equation}
	(X)_r \coloneqq X (X-1)\cdots (X-r+1).
\end{equation}
The expectation value $\EE[(X)_r]$ is called the $r$-th factorial moment of $X$.

A special class of integer-valued random variables is constituted by those following a Poisson distribution. We say that $X$ is Poisson distributed with mean $\lambda \in \RR_{>0}$ if its probability mass function satisfies
\begin{equation}\label{eq:prob:Poisson}
	\PP[X = k] = \re^{-\lambda} \, \frac{\lambda^k}{k!}
	\qquad
	\text{for all } k \in \ZZ_{\geq 0}.
\end{equation}
In this case, it can be shown that the $r$-th factorial moment of $X$ is given by $\EE[(X)_r] = \lambda^r$ for all integers $r \in \ZZ_{\geq 0}$. The method of moments asserts that the converse is also true. In other words, an integer-valued random variable is Poisson distributed with mean $\lambda$ if and only if its its $r$-th factorial moment is given by $\lambda^r$. More precisely, we have the following result (see for instance \cite{Bol01}).

\begin{theorem}[Method of moments] \label{thm:meth:mom}
	Let $\{ (\Omega_d, \cB_d, \PP_d) \}_{d \in \ZZ_{\geq 1}}$ be a sequence of probability spaces, and let $(X_{d,1}, \dots, X_{d,p}) \colon \Omega_d \to \ZZ_{\geq 0}^p$ be an integer-valued random vector. Suppose there exists $(\lambda_1,\dots,\lambda_p) \in \RR_{>0}^p$ such that
	\begin{equation}
		\lim_{d \to \infty} \EE \left[ \prod_{i=1}^p (X_{d,i})_{r_i} \right]
		=
		\prod_{i=1}^p \lambda_i^{r_i}
	\end{equation}
	for all $(r_1, \dots, r_p) \in \ZZ_{\geq 0}^p$. Then $(X_{d,1}, \dots, X_{d,p})_{d \in \ZZ_{\geq 1}}$ converges jointly in distribution to a vector of independent Poisson variables with parameters $(\lambda_1, \dots, \lambda_p)$.
	In other words, for all $(k_1, \dots, k_p) \in \ZZ_{\geq 0}^p$:
	\begin{equation}
		\lim_{d \to \infty} \PP\bigl[ X_{d,1} = k_1, \dots, X_{d,p} = k_p \bigr]
		=
		\prod_{i=1}^p \re^{-\lambda_i} \, \frac{\lambda_i^{k_i}}{k_i!} .
	\end{equation}
\end{theorem}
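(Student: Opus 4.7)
The plan is to invert the relation between joint factorial moments and the joint probability mass function via a multivariate inclusion--exclusion, and then interchange the limit $d \to \infty$ with the resulting series, using that the Poisson distribution is determined by its moments.

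The starting point is the elementary identity $\mathbf{1}_{\{m = k\}} = \sum_{r \ge k} \frac{(-1)^{r-k}}{k!\,(r-k)!} (m)_r$, valid for every $m \in \ZZ_{\ge 0}$ (the sum truncates at $r = m$ since $(m)_r = 0$ for $r > m$). Applying this coordinate-wise and taking expectations gives
\begin{equation}
\PP\bigl[X_{d,1}=k_1,\dots,X_{d,p}=k_p\bigr]
=
\sum_{\substack{r_1 \ge k_1 \\ \vdots \\ r_p \ge k_p}}
\left( \prod_{i=1}^{p} \frac{(-1)^{r_i-k_i}}{k_i!\,(r_i-k_i)!} \right)
\EE\!\left[ \prod_{i=1}^{p} (X_{d,i})_{r_i} \right].
\end{equation}
Substituting the assumed limit $\prod_i \lambda_i^{r_i}$ termwise and using the closed form $\sum_{r \ge k}(-1)^{r-k}\lambda^r/(k!(r-k)!) = \re^{-\lambda}\lambda^k/k!$ recovers exactly $\prod_{i=1}^{p} \re^{-\lambda_i}\lambda_i^{k_i}/k_i!$, the joint mass function of independent Poisson variables, as required.

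The one non-algebraic step is justifying the exchange of $\lim_{d\to\infty}$ with the infinite sum over $r$. The most direct route is to exhibit a uniform bound of the form $\EE[\prod_i (X_{d,i})_{r_i}] \le B \prod_i A_i^{r_i}$ with constants $B$ and $A_i$ independent of $d$ (with $A_i$ slightly larger than $\lambda_i$ suffices), so that the series is uniformly dominated and termwise passage to the limit is legal. A slicker alternative is to pass through factorial moment generating functions: convergence of all joint factorial moments gives convergence of $\EE[\prod_i (1+t_i)^{X_{d,i}}]$ on a neighbourhood of the origin, and since the Poisson distribution satisfies Carleman's condition (its moments grow only factorially), this is enough to conclude convergence in joint distribution, with the product structure of the limiting moment generating function automatically forcing asymptotic independence.

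The main obstacle is precisely this uniform tail control on mixed factorial moments; without it the termwise limit remains purely formal. Once it is secured, the rest of the argument is routine: the algebraic identity above produces the Poisson marginals, and the factorisation $\prod_i \lambda_i^{r_i}$ of the limit forces independence of the coordinates in the limit.
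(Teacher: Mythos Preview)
The paper does not prove this theorem; it is stated as a known result with a reference to Bollob\'as. So there is no ``paper's own proof'' to compare against, and your write-up is being assessed on its own merits.

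Your inclusion--exclusion identity and the formal computation are correct. The gap you yourself flag is, however, a real one, and neither of your two suggested fixes actually closes it from the stated hypotheses. Convergence $\EE[\prod_i (X_{d,i})_{r_i}] \to \prod_i \lambda_i^{r_i}$ for each fixed $r$ does \emph{not} imply a uniform bound $\EE[\prod_i (X_{d,i})_{r_i}] \le B \prod_i A_i^{r_i}$: for each $r$ the sequence in $d$ is bounded, but that bound may grow arbitrarily fast in $r$. For the same reason, convergence of all factorial moments does not by itself give convergence of the factorial moment generating function on a neighbourhood of the origin. So as written, the exchange of limit and series remains unjustified.

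The standard repair (and the one in Bollob\'as) is to use Bonferroni-type inequalities: in the univariate case the truncated alternating sums
\[
\sum_{r=k}^{R} \frac{(-1)^{r-k}}{k!\,(r-k)!}\,(m)_r
\]
are genuine upper or lower bounds for $\mathbf{1}_{\{m=k\}}$ depending on the parity of $R-k$, not merely partial sums of a convergent series. One then takes $d\to\infty$ in the finite sum, and only afterwards lets $R\to\infty$, using that $\sum_{r\ge k} \lambda^r/(k!(r-k)!)$ converges absolutely. An alternative clean route is: tightness (from $\sup_d \EE[X_{d,i}]<\infty$ and Markov) plus uniform integrability of each $\prod_i (X_{d,i})_{r_i}$ (which \emph{does} follow, via the identity $(X)_r\,\mathbf{1}_{\{X>M\}} \le M^{-1}((X)_{r+1}+r(X)_r)$ and boundedness of the $(r{+}1)$-st moment), so that every subsequential limit has factorial moments $\prod_i \lambda_i^{r_i}$ and is therefore, by moment determinacy of the Poisson law, the product of independent Poissons.
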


Poisson distributions appear for instance in the context of point processes. A point process (on the positive real line) is a collection of integer-valued random variables $X_{[a,b)} \colon \Omega \to \ZZ_{\geq 0}$ indexed by positive real numbers $0 \le a < b$. The value $X_{[a,b)}$ can be thought of as the number of events happening in the interval $[a,b)$.

A point process is called Poisson if there exists a locally integrable non-negative function $\lambda \colon \RR_{\ge 0} \to \RR_{\geq 0}$, called the intensity, such that $X_{[a,b)}$ is Poisson distributed with mean
\begin{equation}
	\int_a^b \lambda(\ell) \, d\ell,
\end{equation}
and for any disjoint intervals $[a_1, b_1], \dots, [a_p, b_p] \subset \RR_{\geq 0}$, $X_{[a_1, b_1)}, \dots, X_{[a_p, b_p]}$ are independent.

\section{The combinatorial length spectrum}
\label{sec:comb:spec}
\subsection{Setup}
Fix a topological surface $\Sigma$ of type $(g,n)$ and a cell in $\cT_{\Sigma}^{\comb}(L)$, i.e.\ an embedded ribbon graph $G \hookrightarrow \Sigma$ that is a retract. A free homotopy class of a (non-null, non-peripheral) closed curve $\mathtt{C}$ is called a cycle if its unique non-backtracking edge-path representative visits each edge at most once. Notice that the notion of cycle depends on the underlying embedded ribbon graph. See \Cref{fig:cycle} for an example and a non-example.

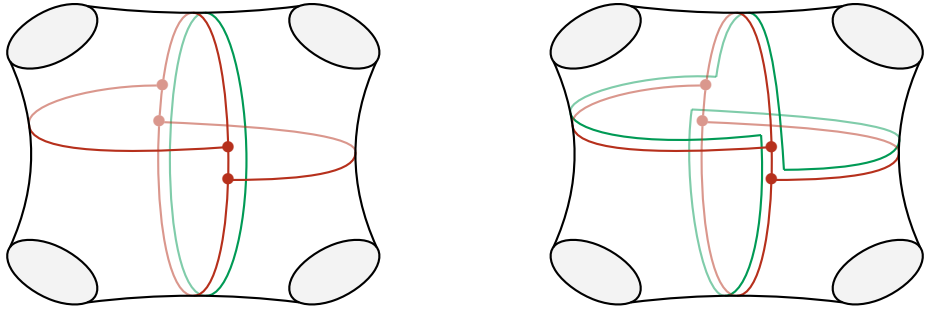
\begin{figure}
	\centering
	\begin{tikzpicture}[x=1pt,y=1pt,scale=.55]

		\draw [ForestGreen, thick] (200.4821, 518.9001) .. controls (216.4226, 518.4413) and (228.2113, 563.2207) .. (228.1876, 611.9957) .. controls (228.1639, 660.7707) and (216.3278, 713.5414) .. (199.8243, 713.0728);
		\draw [ForestGreen, thick, opacity=.5] (200.4821, 518.9001) .. controls (184.2458, 518.9054) and (176.1229, 563.4527) .. (176.0729, 611.9949) .. controls (176.023, 660.5371) and (184.0459, 713.0742) .. (199.8243, 713.0728);

		\draw [BrickRed, thick] (192.122, 518.961) .. controls (207.846, 518.744) and (215.923, 563.372) .. (215.8808, 611.995) .. controls (215.8385, 660.618) and (207.677, 713.236) .. (191.759, 713.019);
		\draw [BrickRed, thick, opacity=.5] (191.752, 713.019) .. controls (175.943, 713.246) and (167.9715, 660.623) .. (168.0973, 611.9973) .. controls (168.223, 563.3715) and (176.446, 518.743) .. (192.122, 518.961);
		\draw [BrickRed, thick] (215.676, 598.365) .. controls (274.222, 598.215) and (303.1163, 604.59) .. (302.359, 617.49);
		\draw [BrickRed, thick, opacity=.5] (302.359, 617.49) .. controls (303.0103, 627.6507) and (258.492, 634.5257) .. (168.804, 638.115);
		\draw [BrickRed, thick] (215.782, 620.924) .. controls (126.2979, 614.536) and (81.1069, 619.603) .. (80.209, 636.125);
		\draw [BrickRed, thick, opacity=.5] (80.209, 636.125) .. controls (77.3095, 650.988) and (128, 664) .. (171.056, 663.093);

		\draw [thick, shift={(96, 515.852)}, yscale=0.7222, fill,fill opacity=.05 ] (0, 0) .. controls (-10.6667, 5.3333) and (-21.3333, 16) .. (-26.6667, 26.6667) .. controls (-32, 37.3333) and (-32, 48) .. (-26.6667, 53.3333) .. controls (-21.3333, 58.6667) and (-10.6667, 58.6667) .. (0, 53.3333) .. controls (10.6667, 48) and (21.3333, 37.3333) .. (26.6667, 26.6667) .. controls (32, 16) and (32, 5.3333) .. (26.6667, 0) .. controls (21.3333, -5.3333) and (10.6667, -5.3333) .. cycle;
		\draw [thick, shift={(261.333, 535.111)}, yscale=0.7222, fill,fill opacity=.05] (0, 0) .. controls (5.3333, 10.6667) and (16, 21.3333) .. (26.6667, 26.6667) .. controls (37.3333, 32) and (48, 32) .. (53.3333, 26.6667) .. controls (58.6667, 21.3333) and (58.6667, 10.6667) .. (53.3333, 0) .. controls (48, -10.6667) and (37.3333, -21.3333) .. (26.6667, -26.6667) .. controls (16, -32) and (5.3333, -32) .. (0, -26.6667) .. controls (-5.3333, -21.3333) and (-5.3333, -10.6667) .. cycle;
		\draw [thick, shift={(96, 677.629)}, yscale=0.7222, fill,fill opacity=.05] (0, 0) .. controls (10.6667, 5.3333) and (21.3333, 16) .. (26.6667, 26.6667) .. controls (32, 37.3333) and (32, 48) .. (26.6667, 53.3333) .. controls (21.3333, 58.6667) and (10.6667, 58.6667) .. (0, 53.3333) .. controls (-10.6667, 48) and (-21.3333, 37.3333) .. (-26.6667, 26.6667) .. controls (-32, 16) and (-32, 5.3333) .. (-26.6667, 0) .. controls (-21.3333, -5.3333) and (-10.6667, -5.3333) .. cycle;
		\draw [thick, shift={(314.667, 696.888)}, yscale=0.7222, fill,fill opacity=.05] (0, 0) .. controls (-5.3333, 10.6667) and (-16, 21.3333) .. (-26.6667, 26.6667) .. controls (-37.3333, 32) and (-48, 32) .. (-53.3333, 26.6667) .. controls (-58.6667, 21.3333) and (-58.6667, 10.6667) .. (-53.3333, 0) .. controls (-48, -10.6667) and (-37.3333, -21.3333) .. (-26.6667, -26.6667) .. controls (-16, -32) and (-5.3333, -32) .. (0, -26.6667) .. controls (5.3333, -21.3333) and (5.3333, -10.6667) .. cycle;
		\draw [thick, shift={(67.223, 552.295)}, yscale=0.7222] (0, 0) .. controls (19.1849, 58.8053) and (19.2167, 117.5507) .. (0.0954, 176.236);
		\draw [thick, shift={(316.65, 679.536)}, yscale=0.7222] (0, 0) .. controls (-19.1, -58.6493) and (-19.0633, -117.365) .. (0.11, -176.147);
		\draw [thick, shift={(119.833, 514.342)}, yscale=0.7222] (0, 0) .. controls (48.1293, 8.5047) and (96.1987, 8.5277) .. (144.208, 0.069);
		\draw [thick, shift={(120, 717.591)}, yscale=0.7222] (0, 0) .. controls (47.998, -8.4433) and (96.0017, -8.4413) .. (144.011, 0.006);
		
		\node [white] at (171.056, 663.093) {$\bullet$};
		\node [white] at (168.804, 638.115) {$\bullet$};
		\node [BrickRed,opacity=.5] at (171.056, 663.093) {$\bullet$};
		\node [BrickRed,opacity=.5] at (168.804, 638.115) {$\bullet$};
		\node [BrickRed] at (215.676, 598.365) {$\bullet$};
		\node [BrickRed] at (215.782, 620.924) {$\bullet$};

		\begin{scope}[xshift=13cm]

			\draw [ForestGreen, thick, opacity=.5] (199.9308, 713.0743)  .. controls (189.3772, 713.0737) and (182.0971, 698.4117) .. (178.0905, 669.0882);
			\draw [ForestGreen, thick] (199.9308, 713.0743)  .. controls (211.7884, 713.4267) and (219.8583, 677.5075) .. (224.1407, 605.3169);
			\draw [ForestGreen, thick] (224.1407, 605.3169)  .. controls (276.5292, 605.678) and (302.7223, 612.6605) .. (302.72, 626.2643);
			\draw [ForestGreen, thick, opacity=.5] (302.72, 626.2643)  .. controls (303.785, 635.166) and (256.6768, 641.9779) .. (161.3953, 646.7001);
			\draw [ForestGreen, thick, opacity=.5] (178.0905, 669.0882)  .. controls (133.7051, 672.0363) and (75.4731, 657.5049) .. (78.8327, 644.1781);
			\draw [ForestGreen, thick] (78.8327, 644.1781)  .. controls (80.5834, 632.8758) and (120.2153, 619.397) .. (208.5921, 629.1344);
			\draw [ForestGreen, thick] (208.5921, 629.1344)  .. controls (212.5112, 561.0229) and (199.1187, 518.8961) .. (184.0373, 518.9025);
			\draw [ForestGreen, thick, opacity=.5] (184.0373, 518.9025)  .. controls (167.6022, 518.4275) and (154.8293, 598.7806) .. (161.3953, 646.7001);

			\draw [BrickRed, thick] (192.122, 518.961) .. controls (207.846, 518.744) and (215.923, 563.372) .. (215.8808, 611.995) .. controls (215.8385, 660.618) and (207.677, 713.236) .. (191.759, 713.019);
			\draw [BrickRed, thick, opacity=.5] (191.752, 713.019) .. controls (175.943, 713.246) and (167.9715, 660.623) .. (168.0973, 611.9973) .. controls (168.223, 563.3715) and (176.446, 518.743) .. (192.122, 518.961);
			\draw [BrickRed, thick] (215.676, 598.365) .. controls (274.222, 598.215) and (303.1163, 604.59) .. (302.359, 617.49);
			\draw [BrickRed, thick, opacity=.5] (302.359, 617.49) .. controls (303.0103, 627.6507) and (258.492, 634.5257) .. (168.804, 638.115);
			\draw [BrickRed, thick] (215.782, 620.924) .. controls (126.2979, 614.536) and (81.1069, 619.603) .. (80.209, 636.125);
			\draw [BrickRed, thick, opacity=.5] (80.209, 636.125) .. controls (77.3095, 650.988) and (128, 664) .. (171.056, 663.093);

			\draw [thick, shift={(96, 515.852)}, yscale=0.7222, fill,fill opacity=.05] (0, 0) .. controls (-10.6667, 5.3333) and (-21.3333, 16) .. (-26.6667, 26.6667) .. controls (-32, 37.3333) and (-32, 48) .. (-26.6667, 53.3333) .. controls (-21.3333, 58.6667) and (-10.6667, 58.6667) .. (0, 53.3333) .. controls (10.6667, 48) and (21.3333, 37.3333) .. (26.6667, 26.6667) .. controls (32, 16) and (32, 5.3333) .. (26.6667, 0) .. controls (21.3333, -5.3333) and (10.6667, -5.3333) .. cycle;
			\draw [thick, shift={(261.333, 535.111)}, yscale=0.7222, fill,fill opacity=.05] (0, 0) .. controls (5.3333, 10.6667) and (16, 21.3333) .. (26.6667, 26.6667) .. controls (37.3333, 32) and (48, 32) .. (53.3333, 26.6667) .. controls (58.6667, 21.3333) and (58.6667, 10.6667) .. (53.3333, 0) .. controls (48, -10.6667) and (37.3333, -21.3333) .. (26.6667, -26.6667) .. controls (16, -32) and (5.3333, -32) .. (0, -26.6667) .. controls (-5.3333, -21.3333) and (-5.3333, -10.6667) .. cycle;
			\draw [thick, shift={(96, 677.629)}, yscale=0.7222, fill,fill opacity=.05] (0, 0) .. controls (10.6667, 5.3333) and (21.3333, 16) .. (26.6667, 26.6667) .. controls (32, 37.3333) and (32, 48) .. (26.6667, 53.3333) .. controls (21.3333, 58.6667) and (10.6667, 58.6667) .. (0, 53.3333) .. controls (-10.6667, 48) and (-21.3333, 37.3333) .. (-26.6667, 26.6667) .. controls (-32, 16) and (-32, 5.3333) .. (-26.6667, 0) .. controls (-21.3333, -5.3333) and (-10.6667, -5.3333) .. cycle;
			\draw [thick, shift={(314.667, 696.888)}, yscale=0.7222, fill,fill opacity=.05] (0, 0) .. controls (-5.3333, 10.6667) and (-16, 21.3333) .. (-26.6667, 26.6667) .. controls (-37.3333, 32) and (-48, 32) .. (-53.3333, 26.6667) .. controls (-58.6667, 21.3333) and (-58.6667, 10.6667) .. (-53.3333, 0) .. controls (-48, -10.6667) and (-37.3333, -21.3333) .. (-26.6667, -26.6667) .. controls (-16, -32) and (-5.3333, -32) .. (0, -26.6667) .. controls (5.3333, -21.3333) and (5.3333, -10.6667) .. cycle;
			\draw [thick, shift={(67.223, 552.295)}, yscale=0.7222] (0, 0) .. controls (19.1849, 58.8053) and (19.2167, 117.5507) .. (0.0954, 176.236);
			\draw [thick, shift={(316.65, 679.536)}, yscale=0.7222] (0, 0) .. controls (-19.1, -58.6493) and (-19.0633, -117.365) .. (0.11, -176.147);
			\draw [thick, shift={(119.833, 514.342)}, yscale=0.7222] (0, 0) .. controls (48.1293, 8.5047) and (96.1987, 8.5277) .. (144.208, 0.069);
			\draw [thick, shift={(120, 717.591)}, yscale=0.7222] (0, 0) .. controls (47.998, -8.4433) and (96.0017, -8.4413) .. (144.011, 0.006);
			
			\node [white] at (171.056, 663.093) {$\bullet$};
			\node [white] at (168.804, 638.115) {$\bullet$};
			\node [BrickRed,opacity=.5] at (171.056, 663.093) {$\bullet$};
			\node [BrickRed,opacity=.5] at (168.804, 638.115) {$\bullet$};
			\node [BrickRed] at (215.676, 598.365) {$\bullet$};
			\node [BrickRed] at (215.782, 620.924) {$\bullet$};

		\end{scope}
	\end{tikzpicture}
	\caption{Example (left) and non-example (right) of a cycle on an embedded metric ribbon graph of type $(0,4)$.}
	\label{fig:cycle}
\end{figure}

The main object of study is the bottom part of the length spectrum of cycles on metric ribbon graphs, i.e.\ the function $N_{g,L,[a,b)} \colon \cT_{\Sigma}^{\comb}(L) \to \ZZ_{\geq 0}$ defined as
\begin{equation}
	N_{g,L,[a,b)}(\GG)
	\coloneqq
	\left\vert
	\Set{
		\substack{
			\displaystyle \text{$\mathtt{C}$ cycle in $\GG$} \\[.25em]
			\displaystyle \text{with length $\ell_{\GG}(\mathtt{C}) \in [a,b)$}
		}
	}
	\right\vert .
\end{equation}
The function $N_{g,L,[a,b)}$ is mapping class group invariant, so it descends to a function on the combinatorial moduli space (denoted with the same symbol) that we can regard as a point process on $\RR_{\ge 0}$. As presented in the introduction, our main result (\Cref{thm:PPP}) is the fact that $N_{g,L,[a,b)}$ converges in distribution as $g \to \infty$ to a Poisson point process of intensity given by
\begin{equation}
	\lambda_{\mu}(\ell) \coloneqq \frac{\cosh(\ell/\mu) - 1}{\ell},
\end{equation}
under the following assumption on the scaling of the boundary components.

\textit{Boundary scaling assumption.} The boundary lengths $L = (L_1,\dots,L_n)$ are positive real functions of $g$ and there exists $\mu > 0$ such that, as $g \to \infty$,
\begin{equation}\label{eq:bnd:scaling}
	|L| \coloneqq L_1 + \cdots + L_n \sim \mu 12 g.
\end{equation}

As outlined in the introduction, the boundary scaling assumption is naturally explained by the fact that, for a random ribbon graph of type $(g,n)$ with constant edge-lengths $\mu$, the total boundary length is given by $|L| = 2\mu (6g-6+3n) \sim \mu 12 g$. It is also worth mentioning that the mean $\int_{a}^{b} \lambda_{\mu}(\ell) d\ell$ can be written as $\frac{1}{2} \int_{a/\mu}^{b/\mu} \mathcal{S}^2(\ell) \, \ell \, d\ell$, where
\begin{equation}
	\mathcal{S}(\ell) \coloneqq \frac{\sinh(\ell/2)}{\ell/2} 
\end{equation}
is the ubiquitous function appearing in the enumerative theory of Riemann surfaces (see for instance \cite{OP06}). The square and the symmetry factor $\frac{1}{2}$ naturally appear by assigning the function $\mathcal{S}$ to the two unlabelled branches of the cycle $\mathtt{C}$.  Moreover, the measure $\ell \, d\ell$ is naturally interpreted as a length-twist measure, after integrating out the twist parameter.

Following \cite{MP19}, the strategy consists in employing the method of moments together with the key observation that the function
\begin{equation}
	N_{g,L,I} \coloneqq \prod_{i=1}^p \left( N_{g,L,[a_i,b_i)} \right)_{r_i}
\end{equation}
has a simple geometric interpretation: it counts the number of ordered $p$-tuples where the $i$-th item is the number of ordered $r_i$-tuples of cycles with length in $[a_i, b_i)$. Here we suppose that all intervals $[a_i,b_i)$ are disjoint, and we denoted $I \coloneqq \prod_{i=1}^p [a_i,b_i)^{r_i}$.

Another key step in the proof is to split $N_{g,L,I}$ as a sum of three terms:
\begin{equation} \label{eq:trinity}
	N_{g,L,I}
	=
	N_{g,L,I}^{\circ} + N_{g,L,I}^{\asymp} + N_{g,L,I}^{\times},
\end{equation}
where we have set:
\begin{itemize} \setlength\itemsep{.5em}
	\item $N_{g,L,I}^{\circ}$ for the number of tuples such that all cycles are simple, distinct, and non-separating,

	\item $N_{g,L,I}^{\asymp}$ for the number of tuples such that all cycles are simple, distinct, and separating,

	\item $N_{g,L,I}^{\times}$ for the number of tuples such that at least one cycle is non-simple, or at least one pair of cycles intersect.
\end{itemize}
We also denote by $\bar{N}_{g,L,I}^{\ast}$ the corresponding counting where ``cycle'' is replaced by ``closed curve''. Clearly, $N_{g,L,I}^{\ast} \le \bar{N}_{g,L,I}^{\ast}$ for $\ast \in \{ \circ, \asymp, \times \}$. The goal of \Cref{subsec:negl} is to prove the following claims: under the boundary scaling assumption \labelcref{eq:bnd:scaling}, as $g \to \infty$:
\begin{enumerate}[label=\textsc{\alph*}), ref=(\textsc{\alph*})] \setlength\itemsep{.5em}
	\item\label{itm:A}
		$\EE[ \bar{N}_{g,L,I}^{\circ} ] \sim \prod_{i=1}^p ( \int_{a_i}^{b_i} \lambda_\mu(x) \, dx )^{r_i}$,

	\item\label{itm:B}
	$\EE[ N_{g,L,I}^{\asymp} ] = \bO(g^{-1/2})$,

	\item\label{itm:C}
	$\EE[ N_{g,L,I}^{\times} ] = \bO(g^{-1/2})$,

	\item\label{itm:D}
	$\EE[ \bar{N}_{g,L,I}^{\circ} ] - \EE[ N_{g,L,I}^{\circ} ] = \bO(g^{-1/2})$.
\end{enumerate}
Claims \ref{itm:A}--\ref{itm:D}, together with the splitting \labelcref{eq:trinity}, imply that
\begin{equation}
	\lim_{g \to \infty} \EE\left[ N_{g,L,I} \right]
	=
	\prod_{i=1}^p \left( \int_{a_i}^{b_i} \lambda_\mu(x) \, dx \right)^{r_i} .
\end{equation}
Together with the method of moments, the main result of the paper follows.

The key ingredients that contribute to the proof of Claims~\ref{itm:A}--\ref{itm:D} are: an estimate on the Kontsevich volumes, the integration formula, and an estimate on a certain sum over stable graphs.

\subsection{Volume estimates}
As explained in \Cref{subsec:int:numbers}, the large genus asymptotics of the Kontsevich volumes are expressed in terms of coefficients of the form $[z^{6g-6+3n}] \prod_{i=1}^n \sinh(L_i z)$. More generally, in this section we are going to compute the $g \to \infty$ asymptotic behaviour of coefficients of the form
\begin{equation}
	\mathfrak{F}_{g}
	\coloneqq
	[z^{6g-6+3n}] \, F(z),
	\qquad \text{where} \qquad
	F(z)
	\coloneqq
	\Bigg( \prod_{i=1}^{n} \sinh(L_i z) \Bigg)
	\Bigg( \prod_{j=1}^{m} \sinh(\ell_j z) \Bigg),
\end{equation}
under the following assumptions:
\begin{itemize}
	\item
		$n \geq 1$ and $m \ge 0$ are fixed integers,

	\item
		$L = (L_1,\dots,L_n)$ and $\ell = (\ell_1,\dots,\ell_m)$ are positive real functions of $g$, with $|L| \sim \mu 12g$ (the boundary scaling assumption) and $|\ell| \leq K$ for some $K > 0$.
\end{itemize}
We omit the dependence of $\mathfrak{F}_{g}$ on $(L,\ell)$ for simplicity. In order to compute the asymptotic behaviour of $\mathfrak{F}_{g}$, we apply the saddle-point method (see \cite[Chapter~VIII]{FS09}).

\begin{proposition} \label{prop:sddl:pnt}
	Under the assumptions above, as $g \to \infty$ uniformly for $\ell$ in any compact set of $\RR_{\geq 0}^m$,
	\begin{equation} \label{eq:SgSim}
		\mathfrak{F}_{g}
		\sim
		\frac{\rho^{-(6g-6+3n)}}{\sqrt{3\pi g}}
		\Biggl( \prod_{i=1}^{n} \sinh(L_i \rho) \Biggr)
		\Bigg( \prod_{j=1}^{m} \sinh\left( \frac{\ell_j}{2\mu} \right) \Bigg)
	\end{equation}
	where $\rho \coloneqq (6g-6+3n)/|L| \sim 1/(2\mu)$.
\end{proposition}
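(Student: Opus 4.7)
The plan is to apply the saddle-point method to Cauchy's integral formula
\begin{equation}
    \mathfrak{F}_g = \frac{1}{2\pi i}\oint_{|z|=\rho}\frac{F(z)}{z^{N+1}}\,dz = \frac{1}{2\pi}\int_{-\pi}^{\pi} e^{h(\rho e^{i\theta})}\,d\theta, \qquad h(z) \coloneqq \log F(z) - N\log z,
\end{equation}
with $N = 6g-6+3n$. The true saddle point $\rho_*$ satisfies $\rho_*\sum_i L_i\coth(L_i\rho_*) + \rho_*\sum_j \ell_j\coth(\ell_j\rho_*) = N$; using $\coth(x) = 1 + \bO(e^{-2x})$ for $x$ large, the identity $|L|\rho = N$, and the boundedness of $|\ell|$, a short computation shows $\rho_* = \rho + \bO(1/g)$.

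The first main step is concentration of the angular integral. Using the identity $|\sinh(x+iy)|^2 = \sinh^2 x + \sin^2 y$, I would write the ratio $|F(\rho e^{i\theta})/F(\rho)|$ as a product over $i,j$ of factors of the form $(\sinh^2(a\rho\cos\theta)+\sin^2(a\rho\sin\theta))/\sinh^2(a\rho)$. Since $n$ is fixed while $|L|\to\infty$, the pigeonhole principle forces some $L_{i_0}\geq |L|/n\gtrsim g$; the corresponding factor is then bounded by $C\,e^{-2L_{i_0}\rho(1-\cos\theta)}$, which decays exponentially in $g$ outside windows of width $g^{-1/2+\epsilon}$ around $\theta = 0$ and $\theta = \pi$.

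Inside these windows, I would Taylor expand $h$ around $\rho_*$ (respectively $-\rho_*$). A direct computation yields
\begin{equation}
    \rho_*^2 h''(\rho_*) = N - \sum_i (L_i\rho_*)^2\csch^2(L_i\rho_*) - \sum_j(\ell_j\rho_*)^2\csch^2(\ell_j\rho_*) = N + \bO(1),
\end{equation}
since $x\mapsto (x\csch x)^2$ is bounded by $1$ and $n,m$ are fixed. Laplace's method then gives, per window, $\int e^{h(\rho_*e^{i\theta})}\,d\theta \sim e^{h(\rho_*)}\sqrt{2\pi/(\rho_*^2 h''(\rho_*))} \sim (F(\rho_*)/\rho_*^N)\sqrt{2\pi/N}$. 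The contribution at $\theta = \pi$ matches that at $\theta = 0$ thanks to the parity $F(-z)=(-1)^{n+m}F(z)$: under the implicit assumption that $N+n+m$ is even (necessary for $\mathfrak{F}_g\neq 0$), the two add and the total carries a factor $2$, while otherwise they cancel. Combining this factor with $\sqrt{2\pi N}\sim 2\sqrt{3\pi g}$ gives $\mathfrak{F}_g\sim F(\rho_*)/(\rho_*^N\sqrt{3\pi g})$, and the equivalence $F(\rho_*)/\rho_*^N\sim F(\rho)/\rho^N$ (see below) produces $\mathfrak{F}_g\sim F(\rho)/(\rho^N\sqrt{3\pi g})$. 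Finally, since $\ell_j$ is bounded and $\rho\to 1/(2\mu)$, I replace $\sinh(\ell_j\rho)$ by $\sinh(\ell_j/(2\mu))$ to reach the stated formula.

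The main technical obstacle, which I expect to require the most care, is controlling the prefactor ratio $F(\rho_*)/\rho_*^N$ versus $F(\rho)/\rho^N$: naively $(\rho_*/\rho)^N$ is $e^{\bO(1)}$, so the identification relies on a cancellation between the two contributions $(\log F)'(\rho)\,(\rho_*-\rho)$ and $(N/\rho)\,(\rho_*-\rho)$, whose near-equality is forced precisely by the saddle equation $h'(\rho_*)=0$ and the fact that $(\log F)'(\rho) - N/\rho = \sum_i L_i[\coth(L_i\rho)-1] + \sum_j\ell_j\coth(\ell_j\rho) = \bO(1)$. All error bounds (in the concentration step and in the Taylor remainder) must be shown to hold uniformly for $\ell$ ranging over a fixed compact subset of $\RR_{\geq 0}^m$, which is the setting in which the statement is formulated.
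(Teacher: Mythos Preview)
Your proposal is correct and follows the same overall saddle-point strategy as the paper, but there is one tactical difference worth noting. You work with the \emph{exact} saddle radius $\rho_*$, then afterwards have to transfer the prefactor from $\rho_*$ back to $\rho$, which you correctly flag as ``the main technical obstacle''. The paper sidesteps this entirely by integrating over the \emph{approximate} saddle circle $|z|=\rho$ from the outset: since $f'(0)=\bO(1)$, $f''(0)=-6g+\bO(1)$, and $f'''(0)=\bO(g)$, the choice of cutoff $\theta_0=g^{-2/5}$ ensures simultaneously that $f'(0)\theta_0\to 0$, $f''(0)\theta_0^2\to-\infty$, and $f'''(0)\theta_0^3\to 0$, so the quadratic approximation is valid and no transfer step is needed. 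The paper also exploits parity at the start to reduce to a semicircle, rather than tracking the $\theta=\pi$ window separately. Both routes give the same result; the paper's is a little cleaner precisely where you anticipated the most work.
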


\begin{figure}
	\begin{center}
	\begin{subfigure}[t]{.4\textwidth}
		\centering
		\includegraphics[width=\textwidth]{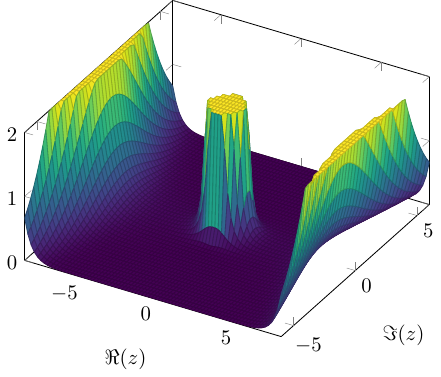}
	\end{subfigure}
	\hspace{1cm}
	\begin{subfigure}[t]{.4\textwidth}
		\centering
		\includegraphics[width=\textwidth]{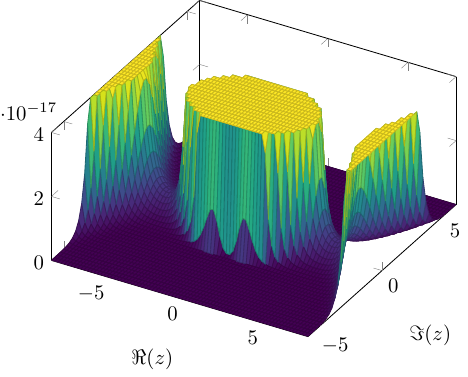}
	\end{subfigure}
	\caption{
		The graph of the absolute value of the function $F(z)z^{-(6g-5+3n)}$ for $g=2$ (left) and $g=10$ (right), with $n=m=1$, $L_1 = g$, and $\ell_1 = 1$. Notice the two saddle-points at $z \sim \pm 6$, becoming more and more pronounced as $g$ increases.
	}
	\label{fig:saddle}
	\end{center}
\end{figure}

\begin{proof}
	The function $F$ is holomorphic on the whole complex $z$-plane. By Cauchy's integral formula, for any radius $r \in \RR_{>0}$,
	\[
		\mathfrak{F}_{g}
		=
		\frac{1}{2 \pi \ri}
		\oint_{|z| = r} \frac{F(z)}{z^{6g-5+3n}} \, dz
		=
		\frac{r^{-(6g-6+3n)}}{\pi}
		\int_{-\pi/2}^{\pi/2}
		F(r \mkern1mu \re^{\ri \theta}) \, \re^{-(6g-6+3n) \ri \theta} \, d\theta.
	\]
	Here we used the parity property of the integrand and reduce the integral over a semicircle. See \Cref{fig:saddle} for a 3D plot of the absolute value of the integrand. Following the terminology of \cite[Chapter~VIII]{FS09}, in order to apply the saddle-point method, we shall choose:
	\begin{itemize}
		\item a radius so that the integration contour is an approximate saddle-point contour,

		\item a splitting of the approximate saddle-point contour as $\mathcal{C}_{\textup{centr}} + \mathcal{C}_{\textup{tail}}$, so that the following holds true.

		\begin{itemize}
			\item \textsc{Tail pruning.} The integral along $\mathcal{C}_{\textup{tail}}$ is negligible.

			\item \textsc{Central approximation.} The integral along $\mathcal{C}_{\textup{centr}}$ is well-approximated by an incomplete Gaussian integral.

			\item \textsc{Tail completion.} The incomplete Gaussian integral is asymptotically equivalent to the complete Gaussian integral.
		\end{itemize}
	\end{itemize}
	We first start with choosing the radius $r$ and the splitting.
	Let us write the integrand $F(r \mkern1mu \re^{\ri \theta}) \, \re^{-(6g-6+3n) \ri \theta}$ as $\re^{\mkern2mu f(\theta)}$, where
	\[
		f(\theta)
		\coloneqq
		\sum_{i=1}^{n} \log \sinh(L_i r \mkern1mu \re^{\ri \theta}) + \sum_{j=1}^{m} \log \sinh(\ell_j r \mkern1mu \re^{\ri \theta}) - (6g-6+3n) \ri \theta.
	\]
	In what follows, prime notation is used to denote derivatives with respect to $\theta$.
	The saddle-point equation for the radius $r$, namely
	\[
		f'(0)
		=
		\ri \sum_{i=1}^{n} L_i r \coth(L_i r)
		+
		\ri \sum_{j=1}^{m} \ell_j r \coth(\ell_j r)
		- (6g-6+3n) \ri
		= 0,
	\]
	has a unique solution, which is asymptotically equivalent to $1/(2\mu)$ as $g \to \infty$. We use $\rho = \rho(g) \coloneqq (6g-6+3n)/|L| \sim 1/(2\mu)$ as an approximate saddle-point contour. Near the saddle-point, we have $f'(0) = \bO(1)$ as $g \to \infty$. Moreover, we have
	\[
	\begin{aligned}
		f''(0)
		&=
		\sum_{i=1}^{n} \bigl(
			L_i^2 \rho^2 \csch^2 (L_i \rho) - L_i \rho \coth(L_i \rho)
		\bigr) +
		\sum_{j=1}^{m} \bigl(
			\ell_j^2 \rho^2 \csch^2 (\ell_j \rho) - \ell_j \rho \coth(\ell_i \rho)
		\bigr) \\
        &=
		- 6g + \bO(1), \\
		f'''(0)
		&=
		\ri \sum_{i=1}^{n} \bigl(
			3 L_i^2 \rho^2 \csch^2 (L_i \rho) -
			2 L_i^3 \rho^3 \csch^2 (L_i \rho) \coth(L_i \rho) -
			L_i \rho \coth(L_i \rho)
		\bigr) \\
		&\quad
		+ \ri \sum_{j=1}^{m} \bigl(
			3 \ell_j^2 \rho^2 \csch^2 (\ell_j \rho) -
			2 \ell_j^3 \rho^3 \csch^2 (\ell_j \rho) \coth(\ell_j \rho) -
			\ell_j \rho \coth(\ell_j \rho)
		\bigr)
		=
		- \ri 6g + \bO(1).
	\end{aligned}
	\]
	Now we set a cut-off $\theta_0$ such that $f'(0) \mkern1mu \theta_0 \to 0$, $f''(0) \mkern1mu \theta_0^2 \to \infty$, $f'''(0) \mkern1mu \theta_0^3 \to 0$ when $g \to \infty$.
	A possible choice is then $\theta_0 = g^{-2/5}$. We thus introduce
	\[
		\mathfrak{F}_{g,\textup{centr}}
		\coloneqq
		\frac{\rho^{-(6g-6+3n)}}{\pi}
		\int_{\mathcal{C}_{\textup{centr}}} \re^{\mkern2mu f(\theta)} \, d\theta,
		\qquad\qquad
		\mathfrak{F}_{g,\textup{tail}}
		\coloneqq
		\frac{\rho^{-(6g-6+3n)}}{\pi}
		\int_{\mathcal{C}_{\textup{tail}}} \re^{\mkern2mu f(\theta)} \, d\theta ,
	\]
	where $\mathcal{C}_{\textup{centr}} \coloneqq \{ \rho \mkern2mu \re^{\ri \theta} \in \CC : |\theta| \leq \theta_0 \}$ and $\mathcal{C}_{\textup{tail}} \coloneqq \{ \rho \mkern2mu \re^{\ri \theta} \in \CC : \theta_0 \le |\theta| \leq \pi/2 \}$. We can now proceed with the three main checks of the saddle-point method: tail pruning, central approximation, and tail completion.

	\textsc{Tail pruning.} Along the tail contour $\mathcal{C}_{\textup{tail}}$ the integrand is negligible as $g \to \infty$. Indeed observe that along the semicircle $\{ \rho \mkern2mu \re^{\ri \theta} \in \CC : |\theta| \le \pi/2 \}$, the integrand is strongly peaked at $\theta = 0$. Thus, along the tail $\mathcal{C}_{\textup{tail}}$, we have $\re^{\mkern2mu f(\theta) - f(0)} = \bO( \re^{|L| \rho \mkern1mu (\cos(g^{-2/5})-1)}) = \bO( \re^{\mkern1mu C g^{-1/5}})$ for some $C > 0$.

	\textsc{Central approximation.} 
	Thanks to the choice $\theta_0 = g^{-2/5}$, we have a quadratic approximation for $f$ along the central contour $\mathcal{C}_{\textup{centr}}$, that is $\re^{\mkern2mu f(\theta)} = \re^{\mkern2mu f(0) - 3\theta^2 g} ( 1 + \bO(g^{-1/5}) )$. Therefore
	\begin{align*}
		\mathfrak{F}_{g,\textup{centr}}
		& =
		\frac{\rho^{-(6g-6+3n)}}{\pi} \, \re^{\mkern2mu f(0)}
		\int_{-\theta_0}^{\theta_0}
		\re^{- 3\theta^2 g} \, d\theta
		\Big( 1 + \bO \big( g^{-1/5} \big) \Big) \\
		& =
		\frac{\rho^{-(6g-6+3n)}}{\pi \sqrt{3g}} \, \re^{\mkern2mu f(0)}
		\int_{-\sqrt{3} g^{1/10}}^{\sqrt{3} g^{1/10}}
		\re^{-\varphi^2} \, d\varphi
		\Big( 1+ \bO \big( g^{-1/5} \big) \Big).
	\end{align*}

	\textsc{Tail completion.} 
	It follows from $\int_a^{+\infty} \re^{-\varphi^2} \, d\varphi = \bO( \re^{-a^2})$ that
	\begin{equation*}
	\begin{split}
		\mathfrak{F}_{g,\textup{centr}}
		& =
		\frac{\rho^{-(6g-6+3n)}}{\pi\sqrt{3g}} \, \re^{\mkern2mu f(0)}
		\int_{-\infty}^{+\infty}
		\re^{-\varphi^2} \, d\varphi
		\Big( 1+ \bO \big( g^{-1/5} \big) \Big) \\
		& =
		\frac{\rho^{-(6g-6+3n)}}{\sqrt{3 \pi g}}
		\Biggl( \prod_{i=1}^{n} \sinh(L_i \rho) \Biggr)
		\Bigg( \prod_{j=1}^{m} \sinh(\ell_j \rho) \Bigg)
		\Big( 1+ \bO \big( g^{-1/5} \big) \Big) \\
		& \sim
		\frac{\rho^{-(6g-6+3n)}}{\sqrt{3 \pi g}}
		\Biggl( \prod_{i=1}^{n} \sinh(L_i \rho) \Biggr)
		\Bigg( \prod_{j=1}^{m} \sinh\left( \frac{\ell_j}{2\mu} \right) \Bigg) .
	\end{split}
	\end{equation*}
	Together with the tail pruning, we have shown that $\mathfrak{F}_{g} \sim \mathfrak{F}_{g,\textup{centr}}$. This completes the proof.
\end{proof}

A direct consequence of the above proposition is an estimate on the symplectic volumes.

\begin{corollary}[{Volume estimates}] \label{cor:VV}
	For any $L = (L_1,\dots,L_n) \in \RR_{>0}^n$ satisfying the boundary scaling assumption \labelcref{eq:bnd:scaling}, we have as $g \to \infty$:
	\begin{equation} \label{eq:Vsim}
		V_{g,n}(L)
		\sim
		\frac{(6g-5+2n)!!}{g! \, 24^g} \, \frac{\rho^{-(6g-6+3n)}}{\sqrt{3\pi g}} \prod_{i=1}^{n} \frac{\sinh(L_i \rho)}{L_i} , 
	\end{equation}
	where $\rho \coloneqq (6g-6+3n)/|L| \sim 1/(2\mu)$. 
	Moreover, for any fixed $\ell = (\ell_1, \dots, \ell_r) \in \RR_{>0}^r$, we have as $g \to \infty$:
	\begin{equation}
		\frac{V_{g-r, n+2r}(L, \ell, \ell)}{V_{g,n}(L)}
		\sim
		2^r \prod_{s=1}^{r} \frac{\cosh(\ell_s/\mu) - 1}{\ell_s^2}.
	\end{equation}
\end{corollary}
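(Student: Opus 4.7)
The plan is to derive both asymptotics from Kontsevich's formula (\Cref{thm:Kont}), Aggarwal's uniform estimate (\Cref{thm:Agg}), and the saddle-point estimate (\Cref{prop:sddl:pnt}).

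For the first asymptotic, I start from the Kontsevich formula. Using the identity $(2d_i+1)!!=(2d_i+1)!/(2^{d_i}d_i!)$, I rewrite it in the double-bracket normalisation as
\begin{equation*}
    V_{g,n}(L) = \sum_{|d|=3g-3+n} \braket{\!\braket{\tau_{d_1}\cdots\tau_{d_n}}\!}_{g,n} \prod_{i=1}^n \frac{L_i^{2d_i}}{(2d_i+1)!}.
\end{equation*}
The Taylor expansion of $\sinh$ identifies the sum over $d$ with $|d|=3g-3+n$ of $\prod_i L_i^{2d_i}/(2d_i+1)!$ as the coefficient of $z^{6g-6+3n}$ in $\prod_i \sinh(L_iz)/L_i$. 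Since $n$ is fixed, \Cref{thm:Agg} applies uniformly and lets me pull the common factor $(6g-5+2n)!!/(g!\,24^g)$ out of the sum, yielding the form \eqref{eq:V}. Applying \Cref{prop:sddl:pnt} with $m=0$ then gives \eqref{eq:Vsim}.

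For the ratio, I apply the same procedure to $V_{g-r,n+2r}(L,\ell,\ell)$. The key invariance making the computation clean is that $6(g-r)-6+3(n+2r)=6g-6+3n$, so the degree at which coefficients are extracted is preserved under $(g,n)\mapsto(g-r,n+2r)$. Treating the $L_i$ as the long boundaries and the $2r$ copies of $\ell_s$ as short ones, \Cref{prop:sddl:pnt} (with parameters $n$ long, $m=2r$ short) and \Cref{thm:Agg} together give
\begin{equation*}
    V_{g-r,n+2r}(L,\ell,\ell) \sim \frac{(6g-5+2n-2r)!!}{(g-r)!\,24^{g-r}}\,\frac{\rho^{-(6g-6+3n)}}{\sqrt{3\pi g}} \prod_{i=1}^n \frac{\sinh(L_i\rho)}{L_i} \prod_{s=1}^r \left(\frac{\sinh(\ell_s/(2\mu))}{\ell_s}\right)^{\!2},
\end{equation*}
with the \emph{same} $\rho=(6g-6+3n)/|L|$ appearing in \eqref{eq:Vsim}.

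Taking the quotient, the factors $\rho^{-(6g-6+3n)}$, $\sqrt{3\pi g}$ and $\prod_i \sinh(L_i\rho)/L_i$ cancel identically, leaving
\begin{equation*}
    \frac{(6g-5+2n-2r)!!}{(6g-5+2n)!!}\cdot\frac{g!}{(g-r)!}\cdot 24^{r}\cdot\prod_{s=1}^r \frac{\sinh(\ell_s/(2\mu))^2}{\ell_s^2}\sim 4^r \prod_{s=1}^r \frac{\sinh(\ell_s/(2\mu))^2}{\ell_s^2},
\end{equation*}
since the first three factors behave as $g^r\cdot 24^r/(6g)^r=4^r$. The identity $2\sinh^2(x/2)=\cosh(x)-1$ then converts this into $2^r\prod_s(\cosh(\ell_s/\mu)-1)/\ell_s^2$, as claimed. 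The only points requiring attention are the uniformity hypothesis $n^2<g/800$ in \Cref{thm:Agg} (automatic for fixed $n$ and fixed $r$ once $g$ is large enough) and the uniformity in $\ell$ asserted in \Cref{prop:sddl:pnt}; no serious obstacle is expected, as the essential step is the exact cancellation engineered by the invariance $6(g-r)-6+3(n+2r)=6g-6+3n$.
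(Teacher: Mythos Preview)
Your proof is correct and follows essentially the same route as the paper: combine Aggarwal's asymptotic (\Cref{thm:Agg}) with the saddle-point estimate (\Cref{prop:sddl:pnt}) to obtain \labelcref{eq:Vsim}, then apply the same two ingredients to the numerator $V_{g-r,n+2r}(L,\ell,\ell)$ and take the quotient. Your explicit observation that $6(g-r)-6+3(n+2r)=6g-6+3n$, which forces the same $\rho$ and makes the large factors cancel exactly, is precisely the mechanism the paper's proof relies on implicitly.
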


\begin{proof}
	The first estimate is a direct combination of \Cref{eq:V} and \Cref{prop:sddl:pnt}. As for the second estimate, the same results imply
	\[
		\frac{V_{g-r, n+2r}(L, \ell, \ell)}{V_{g,n}(L)}
		\sim
		\left( \frac{24g}{6g-5+2n} \right)^r
		\prod_{s=1}^{r} \left( \frac{\sinh(\ell_s/(2\mu))}{\ell_s} \right)^2
		\sim
		2^r \prod_{s=1}^{r} \frac{\cosh(\ell_s/\mu) - 1}{\ell_s^2} .
	\]
	The first estimate follows from $\frac{g!}{(g-r)!} \sim g^r$, while the last estimate follows from $\sinh^2(\frac{x}{2}) = \frac{1}{2}(\cosh(x) - 1)$.
\end{proof}

For a sanity check, consider the one-faced case, i.e.\ when $n = 1$. The $1$-point $\psi$-class intersection numbers admit the closed-form expression $\langle \tau_{3g-2} \rangle_{g,1} = 1/(g! \, 24^g)$, and hence we have
\begin{equation}
	V_{g,1}(L)
	=
	\frac{1}{g! \, 24^g}\frac{L^{6g-4}}{2^{3g-2} (3g-2)!} .
\end{equation}
By Stirling's formula, the above expression is asymptotically equivalent to \labelcref{eq:Vsim} with $n=1$ and $\mu = 1$.

\subsection{Simple, distinct, non-separating closed curves}
We are now ready to prove Claim~\ref{itm:A}. Recall the notation $I = \prod_{i=1}^p [a_i,b_i)^{r_i}$ and $\lambda_{\mu}(\ell) = \frac{\cosh(\ell/\mu) - 1}{\ell}$.

\begin{proposition}\label{prop:EbarNcirc}
	Fix $L = (L_1,\dots,L_n) \in \RR_{>0}^n$ satisfying the boundary scaling assumption. Then
	\begin{equation}
		\lim_{g \to \infty}
		\EE \left[ \bar{N}_{g,L,I}^{\circ} \right]
		=
		\prod_{i=1}^p \left( \int_{a_i}^{b_i} \lambda_{\mu}(\ell) \, d\ell \right)^{r_i} .
	\end{equation}
\end{proposition}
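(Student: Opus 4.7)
The plan is to recognise $\bar N^\circ_{g,L,I}$ as a geometric functional to which the integration formula (Theorem \ref{thm:int:formula}) applies, and then to substitute the ratio estimate of Corollary \ref{cor:VV}. Let $r \coloneqq r_1 + \dots + r_p$, and partition $\{1, \dots, r\}$ into consecutive blocks $S_1, \dots, S_p$ of sizes $r_1, \dots, r_p$, so that the $s$-th entry of an ordered $r$-tuple is constrained to have length in $[a_{k(s)}, b_{k(s)})$, where $k(s)$ is the index of the block containing $s$. An ordered $r$-tuple of pairwise disjoint, distinct, simple closed curves whose union is a non-separating multicurve on $\Sigma$ constitutes, by the change-of-coordinates principle, a single mapping class group orbit. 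Its associated stable graph $\Gamma$ has a single vertex of genus $g-r$ and valency $n+2r$, decorated with $r$ labelled self-loops and $n$ labelled leaves; $|\Aut(\Gamma)| = 2^r$, generated by the half-edge swaps of the self-loops.

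Applying Theorem \ref{thm:int:formula} to the indicator function $F(L,\ell) \coloneqq \prod_{s=1}^r \mathbf{1}_{[a_{k(s)}, b_{k(s)})}(\ell_s)$ yields
\begin{equation*}
\EE\bigl[\bar N^\circ_{g,L,I}\bigr]
= \frac{1}{2^r\, V_{g,n}(L)}\int_I V_{g-r,n+2r}(L,\ell,\ell)\prod_{s=1}^{r}\ell_s\,d\ell_s,
\end{equation*}
and the ratio estimate of Corollary \ref{cor:VV} gives, for each fixed $\ell$,
\begin{equation*}
\frac{V_{g-r,n+2r}(L,\ell,\ell)}{V_{g,n}(L)} \;\xrightarrow[g\to\infty]{}\; 2^r \prod_{s=1}^r \frac{\cosh(\ell_s/\mu)-1}{\ell_s^2}.
\end{equation*}
The factors $2^r$ cancel, the remaining $\prod_s \ell_s\, d\ell_s$ combines with the limit to produce $\prod_s \lambda_\mu(\ell_s)\, d\ell_s$, and Fubini then factors the integral over the compact box $I$ into the claimed product of $p$ single integrals of $\lambda_\mu$.

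The main technical obstacle is justifying the exchange of the limit with the integral. Since $I$ is compact and the limiting integrand is continuous and bounded there, it suffices to dominate the ratio $V_{g-r,n+2r}(L,\ell,\ell)/V_{g,n}(L)$ uniformly in $g$ and in $\ell \in I$. Such a uniform domination can be extracted by combining the uniform bound on intersection numbers (Theorem \ref{thm:uniBd}) with the saddle-point analysis of Proposition \ref{prop:sddl:pnt}: the boundedness of $\ell \in I$ keeps the relevant $\sinh(\ell_s/(2\mu))$ factors uniformly bounded, and dominated convergence then closes the argument.
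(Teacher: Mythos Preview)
Your argument is correct and follows the same route as the paper: identify the single stable graph (one vertex of genus $g-r$ with $r$ labelled self-loops and $|\Aut(\Gamma)|=2^r$), apply the integration formula with $F=\mathds{1}_I$, and invoke the ratio estimate of Corollary~\ref{cor:VV}. If anything, you are more careful than the paper, which simply asserts that ``the result then follows'' without explicitly addressing the exchange of limit and integral; your remark that the uniformity in Proposition~\ref{prop:sddl:pnt} (over $\ell$ in compacta) is what makes dominated convergence work is a welcome clarification. One cosmetic point: $I=\prod_i [a_i,b_i)^{r_i}$ is bounded but not literally compact; the argument goes through unchanged on its closure.
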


\begin{proof}
	Notice that $\EE[ \bar{N}_{g,L,I}^{\circ}]$ is exactly in the form of the integration formula (\Cref{thm:int:formula}), as all curves under consideration are simple and distinct. Besides, since all curves are non-separating, the associated stable graph $\Gamma$ has $r \coloneqq r_1 + \cdots + r_p$ self-loops attached to a single vertex of genus $g - r$ and $n$ leaves. In this case $|\Aut(\Gamma)| = 2^r$, corresponding to the swapping of half-edges composing the $r$ loops.
	Moreover, in the notation of \Cref{thm:int:formula}, $F = \mathds{1}_I$ is the indicator function of the set $I$. Thus, we find
	\[
		\EE \left[ \bar{N}_{g,L,I}^{\circ} \right]
		=
		\frac{2^{-r}}{V_{g,n}(L)}
		\int_{\RR_{> 0}^r}
			\mathds{1}_I(\ell) \cdot V_{g-r,n+2r}(L, \ell, \ell)
			\prod_{s=1}^r \ell_s \, d\ell_s .
	\]
	The result then follows from the second estimate of \Cref{cor:VV}.
\end{proof}

\subsection{The negligible terms} \label{subsec:negl}
The goal of this section is to prove Claims~\ref{itm:B}--\ref{itm:D}. The main difference compared to the previous section is that the integration formula loses its effectiveness when dealing with non-simple curves.
To overcome this limitation, we consider subsurfaces where the curves are filling.
Similar methods have been applied in e.g.\ \cite{MP19,WX22,LW23,AM}. This strategy is implemented by the introduction of an auxiliary function $M_{A,B;\Gamma}$ depending on parameters $A$ and $B$ and a separating stable graph $\Gamma$. The main reason behind the introduction of such a function is that the functions $N_{g,L,I}^{\asymp}$, $N_{g,L,I}^{\times}$ and $\bar{N}_{g,L,I}^{\circ} - N_{g,L,I}^{\circ}$ can all be bounded by the sum over stable graphs involving the auxiliary $M_{A,B;\Gamma}$ (for some specific choices of $A$ and $B$).

Recall the notation for stable graphs introduced in \Cref{subsec:length}. Denote by $\ms{G}^{\sep}_{g,n}$ the set of stable graphs of type $(g,n)$ with at least two vertices (geometrically, they correspond to separating multicurves). Let $A \in \RR_{\geq 1}$, $B \in \RR_{>0}$ and $\Gamma \in \ms{G}^{\sep}_{g,n}$.
Define $M_{A, B; \Gamma} \colon \cT_{\Sigma}^{\comb}(L) \to \RR$ by setting
\begin{multline} \label{eq:MABGamma}
	M_{A, B; \Gamma}(\GG) \\
	\coloneqq
	\left\vert
	\Set{
		(\gamma_i)_{i=1,\dots,\vert\ms{E}(\Gamma)\vert}
		|
		\substack{
			\displaystyle \text{$\gamma$ multicurve in $\Sigma$ marked by $\Gamma$} \\[.25em]
			\displaystyle \text{with length $\ell_{\GG}(\gamma) \leq B$}
		}
	}
	\right\vert
	\cdot
	2^{|\ms{V}(\Gamma)| + |\ms{E}(\Gamma)|}
	\prod_{v \in \ms{V}_{B}(\Gamma)} A^{6g_v-6+3n_v}.
\end{multline}
Here $\ms{V}_{B}(\Gamma)$ denotes the subset of vertices $v \in \ms{V}(\Gamma)$ such that $v$ is adjacent to no leaf with length larger than $B$. 
For ease of notation, we omit the dependence on the boundary lengths $L = (L_1,\dots,L_n)$. The function $M_{A, B; \Gamma}$ is mapping class group invariant, and it descends to a function on the combinatorial moduli space that will be denoted with the same symbol.

The main technical result, whose proof is postponed to \Cref{sec:proof}, is an estimate for the sum over stable graphs of the expectation value of $M_{A, B; \Gamma}$.

\begin{theorem}[{Main estimate}]\label{thm:sumEM}
	For any $A \geq 1, B > 0$, as $g \to \infty$:
	\begin{equation}
		\sum_{\Gamma \in \ms{G}_{g,n}^{\sep}} \EE\left[ M_{A, B; \Gamma} \right]
		=
		\bO(g^{-1/2}).
	\end{equation}
\end{theorem}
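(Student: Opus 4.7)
The starting point is the integration formula (\Cref{thm:int:formula}), which applies because $M_{A,B;\Gamma}$ is a sum over multicurves marked by $\Gamma$ of a function of their lengths. Concretely, writing $k = |\ms{V}(\Gamma)|$ and $r = |\ms{E}(\Gamma)|$, I would obtain
\begin{equation*}
    \EE[M_{A,B;\Gamma}]
    =
    \frac{2^{k+r} \prod_{v \in \ms{V}_B(\Gamma)} A^{6g_v-6+3n_v}}{|\Aut(\Gamma)| \, V_{g,n}(L)}
    \int_{\Delta_B}
    V_\Gamma(L,\ell) \prod_{s=1}^r \ell_s \, d\ell_s ,
\end{equation*}
where $\Delta_B = \{\ell \in \RR_{\geq 0}^r : \sum_s \ell_s \leq B\}$.

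The core of the argument is to estimate the ratio $V_\Gamma(L,\ell)/V_{g,n}(L)$. The denominator is handled by the sharp estimate in \Cref{cor:VV}, which gives $V_{g,n}(L) \gtrsim \frac{(6g-5+2n)!!}{g!\,24^g} \cdot \frac{\rho^{-(6g-6+3n)}}{\sqrt{g}} \prod_i \sinh(L_i\rho)/L_i$. For the numerator, I would apply the uniform bound of \Cref{thm:uniBd} at each vertex, which yields
\begin{equation*}
    V_{g_v,n_v}(L_v,\ell_v)
    \leq
    \left(\tfrac{3}{2}\right)^{n_v-1} \frac{(6g_v-5+2n_v)!!}{g_v! \, 24^{g_v}}
    \, [z^{6g_v-6+3n_v}] \prod_{x \in L_v \sqcup \ell_v} \frac{\sinh(x z)}{x}.
\end{equation*}
Taking the ratio, the double-factorial quotient $\prod_v \frac{(6g_v-5+2n_v)!!}{g_v!\,24^{g_v}} / \frac{(6g-5+2n)!!}{g!\,24^g}$ is the source of the genus suppression: using the identity $\sum_v g_v + h^1(\Gamma) = g$ and Stirling, it decays like $g^{-(k-1)-h^1(\Gamma)}$ up to combinatorial factors. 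The coefficient-extraction factors are split into two kinds: for $v \in \ms{V}_B(\Gamma)$ all boundary arguments are $\leq B$, so the $\sinh$ coefficient is at most $(Bz)^{6g_v-6+3n_v}/(6g_v-6+3n_v)!$ times a bounded sinh factor, and this tiny factor is precisely what the $A^{6g_v-6+3n_v}$ weighting compensates (after choosing $A$ independently of $g$). For $v \notin \ms{V}_B(\Gamma)$, the $\sinh(L_i z)$ factors on the leaves dominate; a saddle-point analysis identical to the one in \Cref{prop:sddl:pnt}, carried out vertex by vertex, shows that their contribution, divided by $\prod_i \sinh(L_i\rho)/L_i$, is bounded uniformly in $\ell \in \Delta_B$ (using that the subset of large leaves at each large vertex saddles near the same $\rho \sim 1/(2\mu)$).

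Combining these estimates, for a fixed $\Gamma$ one obtains an upper bound of the form $\EE[M_{A,B;\Gamma}] \leq C(A,B)^k \cdot g^{-(k-1)-h^1(\Gamma)+1/2} / |\Aut(\Gamma)|$, where the $+1/2$ comes from the $\sqrt{g}$ in the denominator of \Cref{cor:VV}. Since $\Gamma$ is separating, $k \geq 2$, so each individual term is at worst $O(g^{-1/2})$. To sum over $\ms{G}^{\sep}_{g,n}$, I would stratify by the underlying topological type (number of vertices, edges, leaf distribution) and then by vertex genera $(g_v)$. For a fixed topological type with $k$ vertices, the number of distributions of genus satisfying $\sum g_v = g - h^1(\Gamma)$ is polynomial in $g$, and summing the constant $g^{-(k-1)}$ contribution against the number of topological types of bounded size yields a convergent series. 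The global sum is then dominated by the $k=2$, $h^1(\Gamma)=0$ strata, giving the claimed $\bO(g^{-1/2})$.

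\textbf{Main obstacle.} The delicate point is the combinatorial control of the sum over stable graph topologies: a priori, the number of isomorphism classes in $\ms{G}^{\sep}_{g,n}$ grows super-polynomially with $g$, so a uniform $g^{-(k-1)-h^1(\Gamma)}$ bound per graph is not automatically summable. The trick will be to re-sum first over distributions of genera (giving a factor absorbed by $\prod_v g_v!$ in the denominator), then to use that the number of connected stable graphs with fixed vertex valences $(n_v)$ is controlled by classical enumeration (Cayley-type bounds for the tree part, times a correction for the cycle rank $h^1$). The $\sinh$-coefficient factors at "big" vertices, while large in absolute terms, are precisely matched by the $\rho^{-(6g-6+3n)}\prod\sinh(L_i\rho)$ in the denominator, and it is this matching—rather than any crude bound—that has to be made explicit to prevent the sum from blowing up.
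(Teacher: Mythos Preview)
Your overall architecture is right—integration formula, \Cref{thm:uniBd} for the numerator, \Cref{cor:VV} for the denominator—and you correctly flag the summation over $\ms{G}^{\sep}_{g,n}$ as the hard part. But the per-graph bound you write down is too coarse to carry out that sum, and the fix you sketch in the ``Main obstacle'' paragraph does not actually close the gap.

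Concretely: you state $\EE[M_{A,B;\Gamma}] \le C^{k}\,g^{-(k-1)-h^{1}(\Gamma)+1/2}/|\Aut(\Gamma)|$. Take $k=2$, one edge, $h^{1}=0$: each such graph contributes $O(g^{-1/2})$, but there are $\sim g$ genus splittings $(g_1,g_2)$ with $g_1+g_2=g$, so the sum over this stratum alone is $O(g^{+1/2})$. Saying the number of genus distributions is ``polynomial in $g$'' and hoping to beat it with $g^{-(k-1)}$ fails precisely here. The remedy you gesture at—``absorbed by $\prod_v g_v!$ in the denominator''—is the right instinct, but your stated bound has already thrown that structure away by collapsing the double-factorial ratio to a pure power of $g$.

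What the paper does is keep that structure. The ratio $\prod_v \frac{(6g_v-5+2n_v)!!}{g_v!\,24^{g_v}} \big/ \frac{(6g-5+2n)!!}{g!\,24^g}$ is bounded not by $g^{-(k-1)-h^1}$ but by $C^{|\ms{E}|}\,\prod_v (2g_v-2+n_v)!\,/\,(2g-2+n)!$ (this is \Cref{lem:KK}), and the resulting per-graph bound is $\frac{\sqrt{g}}{|\Aut(\Gamma)|}\frac{C^{|\ms{E}|}}{(2|\ms{E}|)!}\frac{\prod_v \chi_v!}{\chi!}$ with $\chi_v=2g_v-2+n_v$. The sum over genus decorations is then controlled by the elementary estimate $\sum_{\chi_1+\cdots+\chi_k=\chi,\ \chi_i\ge1} \prod_i \chi_i!/\chi! \le 4/\chi$ (\Cref{lem:K}): the many genus splittings are not merely polynomially many, they are \emph{weighted} by reciprocal multinomials whose total mass is $O(1/g)$. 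This is the missing idea. After that, passing to labelled graphs and summing over $(\ms{v},\ms{e})$ with the $1/(2\ms{e})!$ factor gives a convergent double series, and the net $\sqrt{g}\cdot 1/\chi = O(g^{-1/2})$ falls out.

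Two smaller points. First, your treatment of the coefficient extraction is more elaborate than needed: rather than a vertex-by-vertex saddle, the paper uses the one-line bound $[z^{d}]f \le \rho^{-d} f(\rho)$ for power series with nonnegative coefficients, applied at the global $\rho=(6g-6+3n)/|L|$; this immediately controls the $\sinh$ product uniformly on $\Delta_B$. Second, your description of the $A^{6g_v-6+3n_v}$ factor is backwards: it is not ``compensated'' by a tiny coefficient, it is absorbed into the $\sinh$ arguments as $\sinh(L_\lambda A\rho)$, which stays bounded precisely because $v\in\ms{V}_B(\Gamma)$ forces $L_\lambda\le B$.
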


Let us start with Claim~\ref{itm:B}, that is an estimate on the combinatorial length spectrum of simple, distinct, separating cycles.

\begin{proposition}[{Estimate on $N_{g,I,L}^\asymp$}]\label{prop:asympNeqSumM}
	The following bound holds true:
	\begin{equation} \label{eq:asympNeqSumM}
		N_{g,I,L}^\asymp
		\leq
		\sum_{\Gamma \in \ms{G}_{g,n}^{\sep}} M_{1, br; \Gamma}
	\end{equation}
	where $b = \max\Set{b_1,\dots,b_p}$ and $r = r_1+\dots+r_p$. Thus, $\EE[N_{g,I,L}^\asymp] = \bO(g^{-1/2})$.
\end{proposition}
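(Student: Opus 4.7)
The plan is to establish the pointwise bound $N_{g,I,L}^{\asymp}(\GG) \leq \sum_{\Gamma} M_{1,br;\Gamma}(\GG)$ on the combinatorial moduli space, and then integrate and invoke the Main Estimate (Theorem~\ref{thm:sumEM}) with $A = 1$ and $B = br$. To set up the pointwise bound, I would fix an embedded metric ribbon graph $\GG$ and a configuration contributing to $N_{g,I,L}^{\asymp}(\GG)$: a $p$-tuple whose $i$-th block is an ordered $r_i$-tuple of cycles of length in $[a_i, b_i)$, such that the $r = r_1 + \cdots + r_p$ cycles are all simple, pairwise distinct, pairwise disjoint (the disjointness is what distinguishes $N^{\asymp}$ from the $N^{\times}$ piece), and each one individually separating. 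Concatenating the blocks in order gives an ordered $r$-tuple $\gamma = (\gamma_1, \dots, \gamma_r)$ of pairwise disjoint, pairwise distinct, simple closed curves, each of length strictly less than $b$. The map from configurations to such tuples $\gamma$ is injective, since the block sizes $r_i$ are fixed and the $p$-tuple can be recovered by splitting $\gamma$.

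Next, I would assign to $\gamma$ its mapping-class-group orbit $\Gamma \coloneqq \MCG_\Sigma \cdot \gamma$, which is a stable graph of type $(g,n)$ with ordered edges. Because each $\gamma_s$ is individually separating on $\Sigma$, cutting along the full disjoint multicurve disconnects $\Sigma$ into at least two components, so $|\ms{V}(\Gamma)| \geq 2$ and $\Gamma \in \ms{G}_{g,n}^{\sep}$. Since $\ell_{\GG}(\gamma_s) < b$ for each $s$, the multicurve length is bounded by $br$ in either the componentwise or the total-sum convention, so $\gamma$ belongs to the set counted inside $M_{1, br; \Gamma}(\GG)$. Summing over $\Gamma$ and using that the factor $2^{|\ms{V}(\Gamma)| + |\ms{E}(\Gamma)|} \prod_{v \in \ms{V}_{br}(\Gamma)} 1^{6g_v - 6 + 3n_v}$ appearing in the definition of $M_{1,br;\Gamma}$ is at least $1$, I obtain
\begin{equation*}
	N_{g,I,L}^{\asymp}(\GG)
	\leq
	\sum_{\Gamma \in \ms{G}_{g,n}^{\sep}}
		\bigl| \{ \text{ordered multicurves marked by $\Gamma$ of length $\leq br$} \} \bigr|
	\leq
	\sum_{\Gamma \in \ms{G}_{g,n}^{\sep}} M_{1,br;\Gamma}(\GG),
\end{equation*}
which is exactly the pointwise inequality \labelcref{eq:asympNeqSumM}.

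Integrating both sides against the normalised Kontsevich volume on $\cM_{g,n}^{\comb}(L)$, exchanging sum and expectation, and invoking Theorem~\ref{thm:sumEM} with $A = 1$, $B = br$ then yields $\EE[N_{g,I,L}^{\asymp}] = \bO(g^{-1/2})$. No real obstacle is anticipated at this stage: the proposition is essentially a bookkeeping step identifying $N^{\asymp}$ with a sub-count of the master quantity $\sum_{\Gamma} M_{1,br;\Gamma}$ over separating stable graphs. All the hard analytic work is absorbed into the Main Estimate itself, whose proof is postponed to \Cref{sec:proof}.
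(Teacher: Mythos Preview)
Your proposal is correct and follows essentially the same route as the paper: bound $N_{g,I,L}^{\asymp}$ pointwise by observing that each contributing configuration is an ordered multicurve of total length at most $br$ marked by some $\Gamma \in \ms{G}_{g,n}^{\sep}$, absorb the harmless factor $2^{|\ms{V}(\Gamma)|+|\ms{E}(\Gamma)|}\ge 1$, and then apply \Cref{thm:sumEM}. One small wording point: in the paper's splitting, ``separating'' refers to the multicurve as a whole (equivalently $|\ms{V}(\Gamma)|\ge 2$), not to each component individually---but this does not affect your argument, since either reading forces $\Gamma \in \ms{G}_{g,n}^{\sep}$.
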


\begin{proof}
	Clearly, $N_{g,I,L}^\asymp \leq \bar{N}_{g,I,L}^\asymp$. On the other hand, any multicurve $\gamma$ counted by $\bar{N}_{g,I,L}^\asymp$ will have an associated stable graph with at least two vertices (since $\gamma$ is separating) and total length bounded by $b_1 r_1 + \cdots + b_p r_p \le br$. Thus,
	\begin{equation*}
		N_{g,I,L}^\asymp
		\leq
		\bar{N}_{g,I,L}^\asymp
		\leq
		\sum_{\Gamma \in \ms{G}_{g,n}^{\sep}} M_{1, br; \Gamma}.
	\end{equation*}
	The value $2^{|\ms{V}(\Gamma)| + |\ms{E}(\Gamma)|}$ appearing in \Cref{eq:MABGamma} is an overestimate in this case. The estimate on the expectation value then follows from \Cref{thm:sumEM}.
\end{proof}

We now proceed with Claim~\ref{itm:C}, that is an estimate on the combinatorial length spectrum of non-simple cycles. Here it will be crucial to consider cycles rather than curves.

\begin{proposition}[{Estimate on $N_{g,I,L}^\times$}] \label{prop:timesNeqSumM}
	The following bound holds true:
	\begin{equation} \label{eq:timesNeqSumM}
		N_{g,L,I}^{\times}
		\leq
		r! \sum_{\Gamma \in \ms{G}_{g,n}^{\sep}} M_{2^{r+1},2br; \Gamma}
	\end{equation}
	where $b = \max\Set{b_1,\dots,b_p}$ and $r = r_1+\dots+r_p$. Thus, $\EE[N_{g,L,I}^{\times}] = \bO(g^{-1/2})$.
\end{proposition}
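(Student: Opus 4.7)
The strategy mirrors that of \Cref{prop:asympNeqSumM} for $N^{\asymp}_{g,L,I}$, but requires additional care to handle non-simple cycles and cycles that cross. The plan is to associate, to each tuple of cycles counted by $N^{\times}_{g,L,I}$, a separating stable graph $\Gamma$ together with a multicurve $\gamma$ marked by $\Gamma$ of length at most $2br$, and then to bound the number of tuples mapping to the same $(\Gamma, \gamma)$ by the combinatorial factors appearing in $M_{2^{r+1}, 2br; \Gamma}$.

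Given an ordered tuple $\vec{\mathtt{C}} = (\mathtt{C}_1, \dots, \mathtt{C}_r)$ counted by $N^{\times}_{g,L,I}$, the first step is to extract a subsurface. Let $H = H(\vec{\mathtt{C}})$ be the sub-ribbon-graph of the embedded ribbon graph consisting of all edges traversed by at least one $\mathtt{C}_i$, equipped with the ribbon structure inherited from the ambient graph. Since each cycle visits each edge at most once, the total edge length of $H$ is at most $\sum_{i=1}^{r} \ell_{\GG}(\mathtt{C}_i) \leq br$. The geometric realisation $S$ of $H$ is a subsurface of $\Sigma$; because each edge of a ribbon graph contributes its length twice to the boundary of its geometric realisation, the perimeter of $\partial S$ is at most $2br$.

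Next, let $\gamma$ be the multicurve consisting of the essential (non-null-homotopic, non-peripheral) components of $\partial S$ in $\Sigma$; then $\ell_{\GG}(\gamma) \le 2br$. The $\times$ hypothesis is then used to force $\gamma$ to be non-empty and $\Sigma \setminus \gamma$ to decompose into at least two pieces, so that the associated stable graph $\Gamma$ lies in $\ms{G}^{\sep}_{g,n}$. Indeed, if $\partial S$ had no essential component, the filling $S \cup \{\text{disks along null-homotopic boundaries}\}$ would exhaust $\Sigma$, forcing $H$ to carry the full topology of $\Sigma$; an Euler-characteristic and vertex-valence count then yields $H = G$, contradicting the bound on the total edge length of $H$ against the length $|L|/2 \sim 6 \mu g$ of $G$ for $g$ large enough under the boundary scaling assumption \labelcref{eq:bnd:scaling}. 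This is the delicate step, whose subtlety lies in the possible existence of very short edges that could allow $H$ to capture a lot of topology with little length; it must be handled jointly with the asymptotic regime.

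Once $(\Gamma, \gamma)$ is fixed, the multicurve cuts $\Sigma$ into pieces indexed by $\ms{V}(\Gamma)$, and the cycles $\mathtt{C}_i$ lie entirely in the pieces corresponding to components of $S$. Under the boundary scaling, for $g$ large each boundary length $L_j$ exceeds $2br$, so any vertex incident to a leaf fails to be bounded in the sense of \labelcref{eq:MABGamma}; in particular every vertex hosting a piece of $S$ is bounded. For such a vertex $v$ of type $(g_v, n_v)$, the supported ribbon graph has at most $6g_v - 6 + 3n_v$ edges, and each edge can be used by each of the $r$ cycles in one of two orientations or left unused, giving at most $(2^{r+1})^{6g_v - 6 + 3n_v}$ configurations. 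The factor $r!$ absorbs the relabeling of the cycles after the unordered configuration is fixed, while the factor $2^{|\ms{V}(\Gamma)| + |\ms{E}(\Gamma)|}$ in the definition of $M_{A,B;\Gamma}$ absorbs the residual combinatorial choices. This yields the pointwise bound \labelcref{eq:timesNeqSumM}; taking expectations and invoking \Cref{thm:sumEM} finally gives $\EE[N^{\times}_{g,L,I}] = \bO(g^{-1/2})$. The main obstacle is the argument that the $\times$ condition genuinely produces a separating multicurve in the asymptotic regime, as described above.
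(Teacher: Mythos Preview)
Your overall architecture matches the paper's: build a subsurface from the offending cycles, take its essential boundary $\gamma$, and bound the fibre of the map $\vec{\mathtt{C}} \mapsto (\Gamma,\gamma)$ by the combinatorial weights in $M_{A,B;\Gamma}$. The paper, however, uses the tubular neighbourhood in $\Sigma$ of the curves drawn in general position (so that connected components correspond to actual geometric intersections), while you use the fattening $S$ of the sub-ribbon-graph $H$ of traversed edges. The paper's choice lets the $\times$ hypothesis act directly: at least one connected component $K$ of $\mathtt C_1\cup\cdots\cup\mathtt C_r$ has a tubular neighbourhood $\Xi_K$ of negative Euler characteristic, and its essential boundary immediately separates $\Xi_K$ from its complement, giving $\Gamma\in\ms G^{\sep}_{g,n}$ without appealing to the boundary scaling.

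There are two genuine problems in your argument. First, the claim that ``for $g$ large each boundary length $L_j$ exceeds $2br$'' is false: the scaling hypothesis \labelcref{eq:bnd:scaling} only controls the sum $|L|$, not the individual $L_j$. Fortunately the conclusion you draw from it can be rescued: if a vertex $v$ hosting a piece of $\bar S$ carries a leaf $j$, then some face of $H$ is peripheral to $\partial_j\Sigma$, and since faces of $H$ are non-backtracking representatives of their homotopy classes, that face has length $\ge L_j$; as the total perimeter of $S$ is $\le 2br=B$, this forces $L_j\le B$, so $v\in\ms V_B(\Gamma)$ regardless. Second, your separating argument is incomplete. Showing $H\neq G$ (hence $\gamma\neq\varnothing$) via the length bound does not by itself yield $\Gamma\in\ms G^{\sep}_{g,n}$: if every component of $H$ were a simple cycle, then $\bar S$ would be a union of annuli, and after primitivising $\gamma$ you could be left with a single (non-separating) curve and a one-vertex stable graph. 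You invoke the $\times$ hypothesis at this point but never actually use it. What is needed is the observation that in the $\times$ case at least two cycles share a vertex of $G$ (intersecting curves drawn on $G$ must meet on $G$), which forces some component of $H$ to have a vertex of degree $\ge 3$ and hence $\chi<0$; since faces of $H$ are never null-homotopic in $\Sigma$, that component of $\bar S$ is stable, and together with the non-trivial complement you obtain two stable vertices.
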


\begin{proof}
	There are $r!$ ways to arrange $r$ distinct cycles into an ordered $r$-tuple.
	Thus, we find
	\[
		N_{g,L,I}^{\times}
		\leq
		r!
		\left\vert
		\Set{
			\substack{
				\displaystyle \text{unordered collection $\mathtt{C}$ of distinct cycles} \\[.25em]
				\displaystyle \text{with at least one intersection} \\[.25em]
				\displaystyle \text{and length $\ell(\mathtt{C}) \leq br$}
			}
		}
		\right\vert .
	\]
	We claim that the right-hand side is bounded by
	\begin{equation*}
		r! \sum_{\Gamma \in \ms{G}_{g,n}^{\sep}}
			\left\vert
			\Set{
				\substack{
					\displaystyle \text{$\gamma$ unordered multicurve of type $\Gamma$} \\[.25em]
					\displaystyle \text{with length $\ell(\gamma) \leq B$}
				}
			}
			\right\vert
			\cdot
			2^{|\ms{V}(\Gamma)| + |\ms{E}(\Gamma)|}
			\prod_{v \in \ms{V}_{B}(\Gamma)} A^{6g_v-6+3n_v} ,
	\end{equation*}
	where $A = 4^{r}$ and $B = 2br$ are constants. The claim would complete the proof, since the above quantity is bounded by the same expression with ``unordered'' replaced by ``ordered'', which is nothing but \labelcref{eq:MABGamma}.

	The strategy is to assign to any given unordered collection of distinct cycles a separating multicurve.
	We start from an empty multicurve $\gamma$.
	Let $\mathtt{C} = \mathtt{C}_1 + \cdots + \mathtt{C}_r$ be the unordered collection of distinct cycles under consideration, and let $K$ be a connected components of $\mathtt{C}_1 \cup \cdots \cup \mathtt{C}_r$, say $K = \mathtt{C}_{i_1} \cup \cdots \cup \mathtt{C}_{i_k}$. Denote by $\Xi_K$ its tubular neighbourhood. Notice that $\Xi_K$ is a subsurface of $\Sigma$ with boundary, and there are two mutually exclusive situations that can occur.
	\begin{enumerate}
		\item
			$\Xi_K$ is a cylinder.
			In this case, $K = \mathtt{C}_i$ for some $i$ and $\mathtt{C}_i$ is simple.
			We then add $\mathtt{C}_i$ to $\gamma$.

		\item
			$\Xi_K$ has negative Euler characteristics. If a boundary component of $\Xi_K$ is peripheral in $\Sigma$, then we just ignore it. The remaining boundary components form an unordered multicurve, say $\beta = \beta_1 + \cdots + \beta_{m}$.
			Then we add $\beta$ to $\gamma$.
			Note that by construction, the total length of boundary components of $\Xi_K$ is bounded by $2 ( \ell(\mathtt{C}_{i_1}) + \cdots + \ell(\mathtt{C}_{i_k}) ) \leq 2br$.
	\end{enumerate}
	We repeat the same procedure for all connected components of $\mathtt{C}_1 \cup \cdots \cup \mathtt{C}_r$. Note that the same curve may be added several times (at most $3$) during the construction.
	Since we want a primitive multicurve, we will just keep one copy. By construction, all curves added to $\gamma$ are disjoint, the resulting multicurve $\gamma$ is separating, and we have $\ell(\gamma) \leq 2 \ell(\mathtt{C}) \leq 2br = B$. See \Cref{fig:algrthm} for an example.

	\begin{figure}[t]
		\centering
		\begin{subfigure}[t]{.3\textwidth}
			\centering
            \begin{adjustbox}{width=.9\textwidth}
			\begin{tikzpicture}[x=1pt,y=1pt]
				\draw [BrickRed,thick,opacity=.5] (261.892, 720.113) .. controls (263.428, 725.422) and (243.714, 742.711) .. (221.5255, 755.5318) .. controls (199.337, 768.3525) and (174.674, 776.705) .. (170.307, 771.836);
				\draw [BrickRed,thick,opacity=.5] (223.766, 763.719) .. controls (226.1547, 762.3363) and (226.2857, 759.091) .. (224.159, 753.983);
				\draw [BrickRed,thick,opacity=.5] (123.846, 709.586) .. controls (122.694, 717.521) and (152, 752) .. (196.538, 767.681);
				\draw [BrickRed,thick,opacity=.5] (150.184, 649.305) .. controls (143.886, 651.339) and (136, 688) .. (139.782, 732.657);
				\draw [BrickRed,thick,opacity=.5] (219.894, 641.555) .. controls (213.976, 636.433) and (161.602, 643.406) .. (139.77, 686.98);
				
				\draw [BrickRed,thick] (170.307, 771.836) .. controls (165.928, 767.86) and (184.964, 747.93) .. (208.6533, 734.504) .. controls (232.3425, 721.078) and (260.685, 714.156) .. (261.892, 720.113);
				\draw [BrickRed,thick] (201.74, 738.682) .. controls (212.5867, 758.1387) and (219.9287, 766.4843) .. (223.766, 763.719);
				\draw [BrickRed,thick] (190.109, 746.937) .. controls (172, 716) and (123.922, 699.666) .. (123.846, 709.586);
				\draw [BrickRed,thick] (160.445, 718.442) .. controls (176, 680) and (157.797, 645.742) .. (150.184, 649.305);
				\draw [BrickRed,thick] (166.528, 685.906) .. controls (208, 688) and (231.646, 648.377) .. (219.894, 641.555);

				\node [white] at (224.159, 753.983) {$\bullet$};
				\node [white] at (196.538, 767.681) {$\bullet$};
				\node [white] at (139.782, 732.657) {$\bullet$};
				\node [white] at (139.77, 686.98) {$\bullet$};
				\node [BrickRed,opacity=.5] at (224.159, 753.983) {$\bullet$};
				\node [BrickRed,opacity=.5] at (196.538, 767.681) {$\bullet$};
				\node [BrickRed,opacity=.5] at (139.782, 732.657) {$\bullet$};
				\node [BrickRed,opacity=.5] at (139.77, 686.98) {$\bullet$};

				\node [BrickRed] at (201.74, 738.682) {$\bullet$};
				\node [BrickRed] at (190.109, 746.937) {$\bullet$};
				\node [BrickRed] at (160.445, 718.442) {$\bullet$};
				\node [BrickRed] at (166.528, 685.906) {$\bullet$};

				\draw [thick] (267.764, 672.035) .. controls (257.2547, 693.345) and (257.2623, 714.6693) .. (267.787, 736.008);
				\draw [thick] (116.024, 671.599) .. controls (126.6747, 693.1997) and (126.7577, 714.632) .. (116.273, 735.896);
				\draw [thick] (202.638, 621.42) .. controls (214.2127, 641.8067) and (232.0073, 652.572) .. (256.022, 653.716);
				\draw [thick] (181.128, 621.781) .. controls (169.7093, 641.927) and (152.0123, 652.572) .. (128.037, 653.716);
				\draw [thick] (181.327, 786.531) .. controls (169.7757, 766.177) and (152.0157, 755.427) .. (128.047, 754.281);
				\draw [thick] (202.839, 786.274) .. controls (214.2797, 766.0913) and (231.9963, 755.428) .. (255.989, 754.284);
				\draw [thick, line join=round, fill,fill opacity=.05] (202, 780.6667) .. controls (204, 782.6667) and (204, 785.3333) .. (202, 787.3333) .. controls (200, 789.3333) and (196, 790.6667) .. (192, 790.6667) .. controls (188, 790.6667) and (184, 789.3333) .. (182, 787.3333) .. controls (180, 785.3333) and (180, 782.6667) .. (182, 780.6667) .. controls (184, 778.6667) and (188, 777.3333) .. (192, 777.3333) .. controls (196, 777.3333) and (200, 778.6667) .. cycle;
				\draw [thick, line join=round, fill,fill opacity=.05] (202, 620.6667) .. controls (204, 622.6667) and (204, 625.3333) .. (202, 627.3333) .. controls (200, 629.3333) and (196, 630.6667) .. (192, 630.6667) .. controls (188, 630.6667) and (184, 629.3333) .. (182, 627.3333) .. controls (180, 625.3333) and (180, 622.6667) .. (182, 620.6667) .. controls (184, 618.6667) and (188, 617.3333) .. (192, 617.3333) .. controls (196, 617.3333) and (200, 618.6667) .. cycle;
				\draw[thick,shift={(251.679, 657.527)}, rotate=-123.6901, line join=round, fill,fill opacity=.05] (0, 0) .. controls (2, 2) and (2, 4.6667) .. (0, 6.6667) .. controls (-2, 8.6667) and (-6, 10) .. (-10, 10) .. controls (-14, 10) and (-18, 8.6667) .. (-20, 6.6667) .. controls (-22, 4.6667) and (-22, 2) .. (-20, 0) .. controls (-18, -2) and (-14, -3.3333) .. (-10, -3.3333) .. controls (-6, -3.3333) and (-2, -2) .. cycle;
				\draw[thick,shift={(262.773, 733.83)}, rotate=-56.3099, line join=round, fill,fill opacity=.05] (0, 0) .. controls (2, 2) and (2, 4.6667) .. (0, 6.6667) .. controls (-2, 8.6667) and (-6, 10) .. (-10, 10) .. controls (-14, 10) and (-18, 8.6667) .. (-20, 6.6667) .. controls (-22, 4.6667) and (-22, 2) .. (-20, 0) .. controls (-18, -2) and (-14, -3.3333) .. (-10, -3.3333) .. controls (-6, -3.3333) and (-2, -2) .. cycle;
				\draw[thick,shift={(126.773, 653.83)}, rotate=-56.3099, line join=round, fill,fill opacity=.05] (0, 0) .. controls (2, 2) and (2, 4.6667) .. (0, 6.6667) .. controls (-2, 8.6667) and (-6, 10) .. (-10, 10) .. controls (-14, 10) and (-18, 8.6667) .. (-20, 6.6667) .. controls (-22, 4.6667) and (-22, 2) .. (-20, 0) .. controls (-18, -2) and (-14, -3.3333) .. (-10, -3.3333) .. controls (-6, -3.3333) and (-2, -2) .. cycle;
				\draw[thick,shift={(115.679, 737.527)}, rotate=-123.6901, line join=round, fill,fill opacity=.05] (0, 0) .. controls (2, 2) and (2, 4.6667) .. (0, 6.6667) .. controls (-2, 8.6667) and (-6, 10) .. (-10, 10) .. controls (-14, 10) and (-18, 8.6667) .. (-20, 6.6667) .. controls (-22, 4.6667) and (-22, 2) .. (-20, 0) .. controls (-18, -2) and (-14, -3.3333) .. (-10, -3.3333) .. controls (-6, -3.3333) and (-2, -2) .. cycle;
			\end{tikzpicture}
            \end{adjustbox}
			\caption{An embedded metric ribbon graph $\GG$ on a sphere with $6$ holes.}
			\label{fig:algrthm:A}
		\end{subfigure}
		\hspace*{.4cm}%
		\begin{subfigure}[t]{.3\textwidth}
			\centering
            \begin{adjustbox}{width=.9\textwidth}
			\begin{tikzpicture}[x=1pt,y=1pt]
				\draw [ForestGreen, thick, opacity=.5] (122.113, 720.088) .. controls (119.714, 727.9) and (141.857, 745.95) .. (165.9332, 756.3698) .. controls (190.0095, 766.7895) and (216.019, 769.579) .. (223.177, 764.094);
				\draw [ForestGreen, thick, opacity=.5] (260.388, 712.045) .. controls (261.854, 719.958) and (240.927, 737.979) .. (217.4993, 750.415) .. controls (194.0715, 762.851) and (168.143, 769.702) .. (160.04, 763.632);
				\draw [ForestGreen, thick, opacity=.5] (220.099, 641.709) .. controls (213.776, 636.234) and (188.888, 646.117) .. (165.9723, 659.0833) .. controls (143.0565, 672.0495) and (122.113, 688.099) .. (123.576, 696.055);

				\fill [ForestGreen, opacity=.15] (224.295, 644.5582) .. controls (217.4307, 639.5907) and (191.7362, 649.3871) .. (168.1117, 662.4659) .. controls (144.4873, 675.5447) and (122.9328, 691.906) .. (123.9065, 699.4397)-- (122.9328, 691.906) .. controls (121.0625, 684.2603) and (141.7544, 668.5139) .. (164.1834, 655.7037) .. controls (186.6123, 642.8936) and (210.7782, 633.0198) .. (217.4307, 639.5907) -- cycle;
				\fill [ForestGreen, opacity=.3]	(123.9065, 699.4397) .. controls (124.0735, 703.7259) and (159.0576, 701.4547) .. (186.0269, 687.6262) .. controls (212.9961, 673.7976) and (231.9505, 648.4118) .. (224.295, 644.5582) .. controls (224.295, 644.5582) and (222.0069, 642.9024) .. (217.4307, 639.5907) .. controls (224.295, 644.5582) and (207.1841, 668.3016) .. (182.0044, 681.1758) .. controls (156.8246, 694.0499) and (123.576, 696.055) .. (122.9328, 691.906) -- cycle;

				\fill [ForestGreen, opacity=.15] (218.9432, 767.1176) .. controls (213.8747, 771.886) and (200.7903, 771.8541) .. (185.6233, 768.0589) .. controls (175.928, 770.1884) and (168.2714, 770.1637) .. (164.7845, 766.9472) -- (157.1724, 761.9536)
				.. controls (158.8881, 762.9529) and (162.7289, 763.0379) .. (167.9326, 762.3188) .. controls (166.4777, 761.641) and (164.8111, 760.9269) .. (163.1528, 760.1769)
				.. controls (139.8138, 749.6212) and (118.1022, 731.9368) .. (120.9824, 724.1567) -- (123.0086, 715.8996) .. controls (120.9824, 724.1567) and (143.2642, 742.4961) .. (168.246, 752.8668) .. controls (173.6391, 755.1056) and (179.158, 756.9731) .. (184.5608, 758.4855) .. controls (194.1621, 755.7544) and (205.4354, 751.4587) ..
				(215.9477, 746.0641)
				.. controls (240.4474, 733.4916) and (260.8141, 714.9503) ..
				(260.043, 708.4747) -- (261.1058, 716.559) .. controls (263.299, 725.0272) and (242.6894, 742.3148) .. (219.8635, 754.4577) .. controls (214.2562, 757.4407) and (208.515, 760.1133) .. (202.9373, 762.3492) .. controls (214.6952, 764.0893) and (223.9158, 763.7702) .. (227.2434, 761.7008) -- cycle;
				\fill [ForestGreen, opacity=.3] (157.1724, 761.9536) .. controls (151.8821, 759.5353) and (160.3695, 740.4445) .. (175.5, 724.3357) .. controls (146.3253, 712.338) and (122.4642, 718.5918) .. (120.9824, 724.1567) -- (123.0086, 715.8996) .. controls (123.6408, 711.641) and (150.8349, 705.7487) .. (180.9342, 718.9617)  .. controls (185.9697, 714.3471) and (191.5445, 710.2161) .. (197.4578, 707.1218) .. controls (225.032, 692.6929) and (259.9657, 700.8075) .. (260.043, 708.4747) -- (260.8141, 714.9503) .. controls (259.9787, 707.4232) and (227.1257, 699.3226) .. (201.0624, 712.7642)
				.. controls (196.2188, 715.2622) and (191.6097, 718.5041)..	(187.3673, 722.1596) .. controls (205.9578, 731.5016) and (234.2727, 758.5318) .. (227.2434, 761.7008) -- (220.9099, 765.6395) .. controls (227.8482, 761.2808) and (202.1142, 736.6719) .. (182.1589, 727.0274) .. controls (166.6922, 742.6782) and (157.8805, 762.5278) .. (163.8386, 766.2274) -- cycle;
			 
				\draw [ForestGreen, thick] (160.04, 763.632) .. controls (151.96, 759.437) and (171.98, 723.7185) .. (198.962, 709.887) .. controls (225.944, 696.0555) and (259.888, 704.111) .. (260.388, 712.045);
				\draw [ForestGreen, thick] (223.177, 764.094) .. controls (231.842, 759.503) and (203.921, 731.7515) .. (176.7203, 720.8083) .. controls (149.5195, 709.865) and (123.039, 715.73) .. (122.113, 720.088);
				\draw [ForestGreen, thick] (123.576, 696.055) .. controls (123.949, 700.071) and (157.9745, 698.0355) .. (183.9893, 684.6815) .. controls (210.004, 671.3275) and (228.008, 646.655) .. (220.099, 641.709);

				\draw [thick] (267.764, 672.035) .. controls (257.2547, 693.345) and (257.2623, 714.6693) .. (267.787, 736.008);
				\draw [thick] (116.024, 671.599) .. controls (126.6747, 693.1997) and (126.7577, 714.632) .. (116.273, 735.896);
				\draw [thick] (202.638, 621.42) .. controls (214.2127, 641.8067) and (232.0073, 652.572) .. (256.022, 653.716);
				\draw [thick] (181.128, 621.781) .. controls (169.7093, 641.927) and (152.0123, 652.572) .. (128.037, 653.716);
				\draw [thick] (181.327, 786.531) .. controls (169.7757, 766.177) and (152.0157, 755.427) .. (128.047, 754.281);
				\draw [thick] (202.839, 786.274) .. controls (214.2797, 766.0913) and (231.9963, 755.428) .. (255.989, 754.284);

				\draw [thick, line join=round, fill,fill opacity=.05] (202, 780.6667) .. controls (204, 782.6667) and (204, 785.3333) .. (202, 787.3333) .. controls (200, 789.3333) and (196, 790.6667) .. (192, 790.6667) .. controls (188, 790.6667) and (184, 789.3333) .. (182, 787.3333) .. controls (180, 785.3333) and (180, 782.6667) .. (182, 780.6667) .. controls (184, 778.6667) and (188, 777.3333) .. (192, 777.3333) .. controls (196, 777.3333) and (200, 778.6667) .. cycle;
				\draw [thick, line join=round, fill,fill opacity=.05] (202, 620.6667) .. controls (204, 622.6667) and (204, 625.3333) .. (202, 627.3333) .. controls (200, 629.3333) and (196, 630.6667) .. (192, 630.6667) .. controls (188, 630.6667) and (184, 629.3333) .. (182, 627.3333) .. controls (180, 625.3333) and (180, 622.6667) .. (182, 620.6667) .. controls (184, 618.6667) and (188, 617.3333) .. (192, 617.3333) .. controls (196, 617.3333) and (200, 618.6667) .. cycle;
				\draw [thick, shift={(251.679, 657.527)}, rotate=-123.6901, line join=round, fill,fill opacity=.05] (0, 0) .. controls (2, 2) and (2, 4.6667) .. (0, 6.6667) .. controls (-2, 8.6667) and (-6, 10) .. (-10, 10) .. controls (-14, 10) and (-18, 8.6667) .. (-20, 6.6667) .. controls (-22, 4.6667) and (-22, 2) .. (-20, 0) .. controls (-18, -2) and (-14, -3.3333) .. (-10, -3.3333) .. controls (-6, -3.3333) and (-2, -2) .. cycle;
				\draw [thick, shift={(262.773, 733.83)}, rotate=-56.3099, line join=round, fill,fill opacity=.05] (0, 0) .. controls (2, 2) and (2, 4.6667) .. (0, 6.6667) .. controls (-2, 8.6667) and (-6, 10) .. (-10, 10) .. controls (-14, 10) and (-18, 8.6667) .. (-20, 6.6667) .. controls (-22, 4.6667) and (-22, 2) .. (-20, 0) .. controls (-18, -2) and (-14, -3.3333) .. (-10, -3.3333) .. controls (-6, -3.3333) and (-2, -2) .. cycle;
				\draw [thick, shift={(126.773, 653.83)}, rotate=-56.3099, line join=round, fill,fill opacity=.05] (0, 0) .. controls (2, 2) and (2, 4.6667) .. (0, 6.6667) .. controls (-2, 8.6667) and (-6, 10) .. (-10, 10) .. controls (-14, 10) and (-18, 8.6667) .. (-20, 6.6667) .. controls (-22, 4.6667) and (-22, 2) .. (-20, 0) .. controls (-18, -2) and (-14, -3.3333) .. (-10, -3.3333) .. controls (-6, -3.3333) and (-2, -2) .. cycle;
				\draw [thick, shift={(115.679, 737.527)}, rotate=-123.6901, line join=round, fill,fill opacity=.05] (0, 0) .. controls (2, 2) and (2, 4.6667) .. (0, 6.6667) .. controls (-2, 8.6667) and (-6, 10) .. (-10, 10) .. controls (-14, 10) and (-18, 8.6667) .. (-20, 6.6667) .. controls (-22, 4.6667) and (-22, 2) .. (-20, 0) .. controls (-18, -2) and (-14, -3.3333) .. (-10, -3.3333) .. controls (-6, -3.3333) and (-2, -2) .. cycle;
			\end{tikzpicture}
            \end{adjustbox}
			\caption{A collection of intersecting cycles $\texttt{C}$ on $\GG$ and the associated tubular neighbourhood.}
			\label{fig:algrthm:B}
		\end{subfigure}
		\hspace*{.4cm}%
		\begin{subfigure}[t]{.3\textwidth}
			\centering
            \begin{adjustbox}{width=.9\textwidth}
			\begin{tikzpicture}[x=1pt,y=1pt]
				\draw [ForestGreen, thick, opacity=.5] (220.099, 641.709) .. controls (213.776, 636.234) and (188.888, 646.117) .. (165.9723, 659.0833) .. controls (143.0565, 672.0495) and (122.113, 688.099) .. (123.576, 696.055);
				\draw [ForestGreen, thick] (123.576, 696.055) .. controls (123.949, 700.071) and (157.9745, 698.0355) .. (183.9893, 684.6815) .. controls (210.004, 671.3275) and (228.008, 646.655) .. (220.099, 641.709);
				\draw [ForestGreen, thick] (121.9887, 720.5852) .. controls (123.7244, 710.8791) and (149.6143, 703.9385) .. (183.6835, 703.1742) .. controls (217.7526, 702.41) and (260.001, 707.822) .. (261.842, 719.911);
				\draw [ForestGreen, thick, opacity=.5] (261.842, 719.911) .. controls (263.924, 726.875) and (227.962, 739.4375) .. (191.8056, 739.9701) .. controls (155.6492, 740.5027) and (119.2984, 729.0055) .. (121.9887, 720.5852);

				\draw [thick] (267.764, 672.035) .. controls (257.2547, 693.345) and (257.2623, 714.6693) .. (267.787, 736.008);
				\draw [thick] (116.024, 671.599) .. controls (126.6747, 693.1997) and (126.7577, 714.632) .. (116.273, 735.896);
				\draw [thick] (202.638, 621.42) .. controls (214.2127, 641.8067) and (232.0073, 652.572) .. (256.022, 653.716);
				\draw [thick] (181.128, 621.781) .. controls (169.7093, 641.927) and (152.0123, 652.572) .. (128.037, 653.716);
				\draw [thick] (181.327, 786.531) .. controls (169.7757, 766.177) and (152.0157, 755.427) .. (128.047, 754.281);
				\draw [thick] (202.839, 786.274) .. controls (214.2797, 766.0913) and (231.9963, 755.428) .. (255.989, 754.284);
				\draw [thick, line join=round, fill,fill opacity=.05] (202, 780.6667) .. controls (204, 782.6667) and (204, 785.3333) .. (202, 787.3333) .. controls (200, 789.3333) and (196, 790.6667) .. (192, 790.6667) .. controls (188, 790.6667) and (184, 789.3333) .. (182, 787.3333) .. controls (180, 785.3333) and (180, 782.6667) .. (182, 780.6667) .. controls (184, 778.6667) and (188, 777.3333) .. (192, 777.3333) .. controls (196, 777.3333) and (200, 778.6667) .. cycle;
				\draw [thick, line join=round, fill,fill opacity=.05] (202, 620.6667) .. controls (204, 622.6667) and (204, 625.3333) .. (202, 627.3333) .. controls (200, 629.3333) and (196, 630.6667) .. (192, 630.6667) .. controls (188, 630.6667) and (184, 629.3333) .. (182, 627.3333) .. controls (180, 625.3333) and (180, 622.6667) .. (182, 620.6667) .. controls (184, 618.6667) and (188, 617.3333) .. (192, 617.3333) .. controls (196, 617.3333) and (200, 618.6667) .. cycle;
				\draw [thick, shift={(251.679, 657.527)}, rotate=-123.6901, line join=round, fill,fill opacity=.05] (0, 0) .. controls (2, 2) and (2, 4.6667) .. (0, 6.6667) .. controls (-2, 8.6667) and (-6, 10) .. (-10, 10) .. controls (-14, 10) and (-18, 8.6667) .. (-20, 6.6667) .. controls (-22, 4.6667) and (-22, 2) .. (-20, 0) .. controls (-18, -2) and (-14, -3.3333) .. (-10, -3.3333) .. controls (-6, -3.3333) and (-2, -2) .. cycle;
				\draw [thick, shift={(262.773, 733.83)}, rotate=-56.3099, line join=round, fill,fill opacity=.05] (0, 0) .. controls (2, 2) and (2, 4.6667) .. (0, 6.6667) .. controls (-2, 8.6667) and (-6, 10) .. (-10, 10) .. controls (-14, 10) and (-18, 8.6667) .. (-20, 6.6667) .. controls (-22, 4.6667) and (-22, 2) .. (-20, 0) .. controls (-18, -2) and (-14, -3.3333) .. (-10, -3.3333) .. controls (-6, -3.3333) and (-2, -2) .. cycle;
				\draw [thick, shift={(126.773, 653.83)}, rotate=-56.3099, line join=round, fill,fill opacity=.05] (0, 0) .. controls (2, 2) and (2, 4.6667) .. (0, 6.6667) .. controls (-2, 8.6667) and (-6, 10) .. (-10, 10) .. controls (-14, 10) and (-18, 8.6667) .. (-20, 6.6667) .. controls (-22, 4.6667) and (-22, 2) .. (-20, 0) .. controls (-18, -2) and (-14, -3.3333) .. (-10, -3.3333) .. controls (-6, -3.3333) and (-2, -2) .. cycle;
				\draw [thick, shift={(115.679, 737.527)}, rotate=-123.6901, line join=round, fill,fill opacity=.05] (0, 0) .. controls (2, 2) and (2, 4.6667) .. (0, 6.6667) .. controls (-2, 8.6667) and (-6, 10) .. (-10, 10) .. controls (-14, 10) and (-18, 8.6667) .. (-20, 6.6667) .. controls (-22, 4.6667) and (-22, 2) .. (-20, 0) .. controls (-18, -2) and (-14, -3.3333) .. (-10, -3.3333) .. controls (-6, -3.3333) and (-2, -2) .. cycle;
			\end{tikzpicture}
            \end{adjustbox}
			\caption{The resulting separating multicurve $\gamma$. One can check that $\ell_{\GG}(\gamma) \le 2\ell_{\GG}(\texttt{C})$.}
			\label{fig:algrthm:C}
		\end{subfigure}
		\caption{From a collection of intersecting cycles to a separating multicurve.}
		\label{fig:algrthm}
	\end{figure}
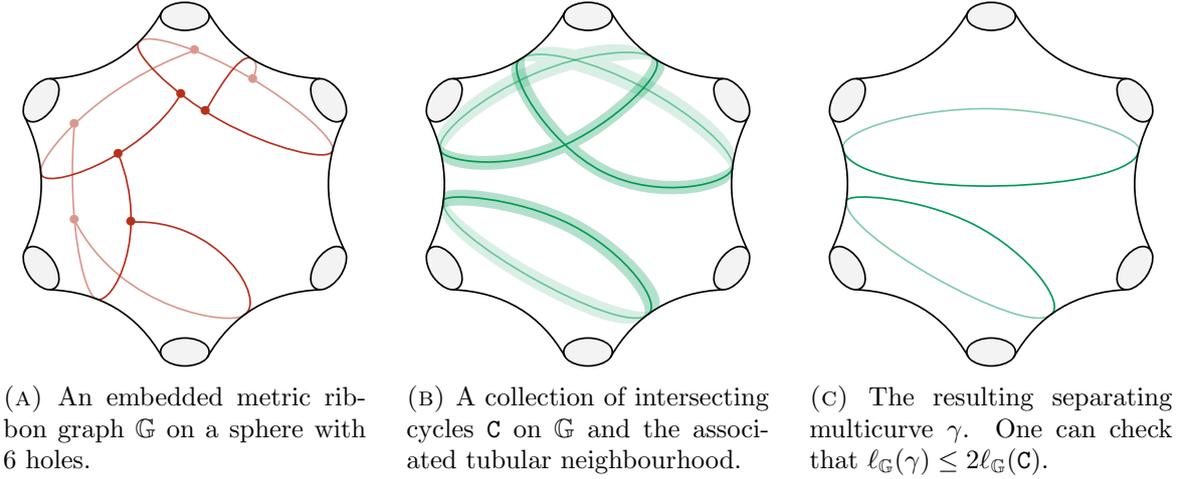

	The drawback of this procedure is that different collections of cycles may yield the same multicurve. Given a (resulting) separating multicurve $\gamma$ of type $\Gamma$, we can estimate the number of different collections of cycles that yield the same $\gamma$ as follows.

	First we bound the number of possible tubular neighbourhoods that can give rise to $\gamma$. Note that such a tubular neighbourhood is the union of cylinders and stable subsurfaces.
	By construction, cylinders correspond to a subset of $\ms{E}(\Gamma)$, while stable subsurfaces correspond to a subset of $\ms{V}_B(\Gamma)$. Hence, the number of tubular neighbourhoods that give rise to the same $\Gamma$ is bounded by $2^{|\ms{E}(\Gamma)| + |\ms{V}_B(\Gamma)|} \le 2^{|\ms{E}(\Gamma)| + |\ms{V}(\Gamma)|}$.

	Next, we bound the number of collections of cycles $\mathtt{C}$ that can give rise to the same tubular neighbourhood, say $\Xi$. Note that a cylinder in $\Xi$ always corresponds to a cycle in $\mathtt{C}$, but a stable subsurface $v$ in $\Xi$ can be induced by different subsets of $\mathtt{C}$. We claim that the number of such subsets is bounded by $A^{6g_v-6+3n_v}$ with $A = 2^{r+1}$.
	Indeed, the number of cycles in the subsurface $v$ is bounded by $2^{6g_v-6+3n_v}$, as there are at most $6g_v-6+3n_v$ edges in $v$ and each edge can either be part of the cycle or not. Moreover, the size of the subset under consideration can vary from $2$ to $r$. Hence, the number of subsets of $\mathtt{C}$ giving rise to $v$ is bounded by $(2^{6g_v-6+3n_v})^2 + \cdots + (2^{6g_v-6+3n_v})^r \leq 2^{(r+1)(6g_v-6+3n_v)}$.
	This gives the desired bound.
\end{proof}

\begin{remark}
	The above proof, which is the only one where considering cycles rather than curves actually matters, generalises to the counting of \emph{$D$-cycles}. A $D$-cycle is an edge-path that visits edges at most $D$ times. In particular, cycles are the same as $1$-cycles. In this case, the above algorithm still holds, but we have to modify the argument for the estimate on the number of different collections of cycles that yield the same tubular neighbourhood. In this case, if a stable subsurface $\Xi$ corresponds to a vertex $v \in \ms{V}_{B}(\Gamma)$, then the number of collections of $D$-cycles in $\Xi$ of size $r$ is bounded by
	\begin{equation}
		\sum_{s=2}^r
			\Bigl(
				(0! + 1! + \cdots + D!)^{6g_v - 6 + 3n_v}
			\Bigr)^s
		\leq
		((D+1)!)^{(r+1)(6g_v - 6 + 3n_v)} .
	\end{equation}
	Thus, \Cref{eq:timesNeqSumM} still holds with with $A = ((D+1)!)^{r+1}$ and $B = 2br$.
\end{remark}

We conclude with a proof of Claim~\ref{itm:D}, that is an estimate on the difference between the combinatorial length spectrum of simple curves and simple cycles.

\begin{proposition}[{Estimate on $\bar{N}_{g,I,L}^{\circ} - N_{g,I,L}^{\circ}$}] \label{lem:simpleNeqSumM}
	The following bound holds true:
	\begin{equation}
		\bar{N}_{g,L,I}^{\circ} - N_{g,L,I}^{\circ}
		\leq
		\sum_{\Gamma \in \ms{G}_{g,n}^{\sep}} M_{1, 3br; \Gamma}
	\end{equation}
	where $b = \max\Set{b_1,\dots,b_p}$ and $r = r_1+\dots+r_p$. Thus, $\EE[\bar{N}_{g,L,I}^{\circ}] - \EE[N_{g,L,I}^{\circ}] = \bO(g^{-1/2})$.
\end{proposition}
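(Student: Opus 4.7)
The quantity $\bar{N}_{g,L,I}^{\circ} - N_{g,L,I}^{\circ}$ counts ordered tuples $(\gamma_1,\dots,\gamma_r)$ of simple, disjoint, distinct, non-separating closed curves with $\ell_{\GG}(\gamma_i) \in [a_i,b_i)$, in which at least one $\gamma_{i_0}$ fails to be a cycle, meaning its unique non-backtracking edge-path representative traverses some edge of $G$ at least twice. The plan is to associate to each such tuple a separating multicurve $\gamma$ of type $\Gamma \in \ms{G}_{g,n}^{\sep}$ with $\ell_{\GG}(\gamma) \leq 3br$, in the spirit of \Cref{prop:timesNeqSumM}, and then to conclude by invoking \Cref{thm:sumEM}.

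The construction proceeds as follows. Given such a tuple, pick a non-cycle $\gamma_{i_0}$, let $G_{\gamma_{i_0}} \subseteq G$ be the subgraph formed by the edges traversed by $\gamma_{i_0}$ (each counted once), and let $\Xi \subseteq \Sigma$ be a small tubular (ribbon) neighborhood of $G_{\gamma_{i_0}}$. Because $\gamma_{i_0}$ is simple on $\Sigma$ yet its edge-path revisits an edge, $G_{\gamma_{i_0}}$ has first Betti number at least two, so $\chi(\Xi) < 0$ and $\Xi$ is not an annulus. Let $\delta$ denote the union of those components of $\partial \Xi$ that are essential in $\Sigma$ (i.e., neither null-homotopic nor peripheral), and set $\gamma \coloneqq \{\gamma_1,\dots,\gamma_r\} \cup \delta$ after removing parallel copies. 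Then $\gamma$ is a primitive separating multicurve, since $\delta$ separates $\Xi$ from its complement in $\Sigma$, and its length satisfies
\[
	\ell_{\GG}(\gamma) \leq \sum_{j=1}^r \ell_{\GG}(\gamma_j) + \ell_{\GG}(\delta) \leq rb + 2\ell_{\GG}(\gamma_{i_0}) \leq (r+2)b \leq 3br
\]
for $r \geq 1$, where the estimate $\ell_{\GG}(\delta) \leq 2\ell_{\GG}(\gamma_{i_0})$ follows from the fact that the ribbon boundary of $G_{\gamma_{i_0}}$ has length equal to twice the sum of its edge lengths, and $\sum_{e \in G_{\gamma_{i_0}}} \ell(e) \leq \sum_{e} m_e \ell(e) = \ell_{\GG}(\gamma_{i_0})$.

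Letting $\Gamma$ be the stable graph type of $\gamma$, the assignment $(\gamma_1,\dots,\gamma_r) \mapsto \gamma$ is boundedly many-to-one: the overcounting coming from the choice of $i_0$, the partition of the components of $\gamma$ into $\{\gamma_j : j \neq i_0\}$ versus $\delta$, and the identification of the distinguished subsurface $\Xi$ is absorbed by the combinatorial factor $2^{|\ms{V}(\Gamma)|+|\ms{E}(\Gamma)|}$ already built into \Cref{eq:MABGamma}. Crucially, no additional factor $A > 1$ is required here (in contrast to \Cref{prop:timesNeqSumM}), because the original $\gamma_j$'s are disjoint from the outset, leaving no within-vertex ambiguity. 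The main technical step to make rigorous is the topological check that $\delta$ is indeed essential and that $\Xi \subsetneq \Sigma$; both follow for $g$ sufficiently large from the bounded length constraint on $\gamma_{i_0}$, since $\gamma_{i_0}$ can then traverse only a bounded number of edges of $G$. This yields the pointwise bound $\bar{N}_{g,L,I}^{\circ} - N_{g,L,I}^{\circ} \leq \sum_{\Gamma \in \ms{G}_{g,n}^{\sep}} M_{1, 3br; \Gamma}$, and taking expectations and applying \Cref{thm:sumEM} gives $\EE[\bar{N}_{g,L,I}^{\circ}] - \EE[N_{g,L,I}^{\circ}] = \bO(g^{-1/2})$.
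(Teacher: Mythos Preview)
Your construction has two genuine gaps.

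First, the set $\{\gamma_1,\dots,\gamma_r\} \cup \delta$ need not be a multicurve. The curves $\gamma_j$ for $j \neq i_0$ are disjoint from $\gamma_{i_0}$ as curves on $\Sigma$, but their non-backtracking edge-path representatives can share edges with $\gamma_{i_0}$, running as parallel strands in the same ribbon. If some $\gamma_j$ uses an edge inside $G_{\gamma_{i_0}}$ and also an edge outside it, then $\gamma_j$ crosses $\partial \Xi$ essentially, and no isotopy removes that intersection. So in general you cannot adjoin $\delta$ to the original tuple and obtain pairwise disjoint curves.

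Second, your justification that $\Xi \subsetneq \Sigma$ and that $\delta$ is essential rests on the claim that a bounded-length curve can traverse only boundedly many edges. This is false for metric ribbon graphs: edge lengths can be arbitrarily small (exactly the failure of the collar lemma discussed in \Cref{sec:wrong}), so $G_{\gamma_{i_0}}$ can have arbitrarily high complexity and may even equal all of $G$. The inequality to be proved is pointwise on the Teichm\"uller space, so an appeal to ``$g$ sufficiently large'' cannot help here. (Your overcounting claim, that the factor $2^{|\ms{V}(\Gamma)|+|\ms{E}(\Gamma)|}$ absorbs the ambiguity in the inverse map, is also left unjustified.)

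The paper sidesteps all of this by taking the \emph{thick neighbourhood} of the entire tuple $\gamma = (\gamma_1,\dots,\gamma_r)$ at once: in each ribbon and each disc of $\Sigma$, one takes the connected region containing all strands of all the $\gamma_j$ simultaneously. Every $\gamma_j$ then lies inside this neighbourhood, so its boundary $\beta$ is automatically disjoint from all of them. A direct topological argument (if $\beta$ were non-separating the neighbourhood would be a union of cylinders, contradicting that some $\gamma_j$ traverses a ribbon more than once) establishes separation with no edge-counting. Finally, $\beta$ is simply appended to the end of the ordered tuple $\gamma$, making the assignment $(\gamma_1,\dots,\gamma_r) \mapsto \bar\gamma$ injective and eliminating any overcounting analysis.
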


\begin{proof}
	Let $\gamma = (\gamma_1, \dots, \gamma_r)$ be an $r$-tuple of simple closed curves which is counted in $\bar{N}^{\circ}_{g,L,I}$ but not in $N^{\circ}_{g,L,I}$. The embedded ribbon graph gives rise to a decomposition of the surface into ribbons (assigned to edges) and discs (assigned to vertices), see \Cref{fig:ribbons:discs}. By taking the geodesic representative of $\gamma$, that is the unique non-backtracking edge-path in the homotopy class, we obtain a collection on non-intersecting segments in each ribbon and a collection of non-intersecting switches in each disc, see \Cref{fig:thick:neigh}. Define the thick neighbourhood of $\gamma$ as the open subset of $\Sigma$ obtained by taking in each ribbon (resp.\ disc) the connected open subset that contains all segments (resp.\ switches), see \Cref{fig:thick:neigh} again. The boundary of the thick neighbourhood of $\gamma$ is a multicurve $\beta$ (possibly with peripheral components that we ignore), that we can make into an ordered multicurve in accordance with an arbitrary order of the (countable) set of closed curves in $\Sigma$. 

	\begin{figure}
		\centering
		\begin{subfigure}[t]{.34\textwidth}
			\centering
            \begin{adjustbox}{width=\textwidth}
			\begin{tikzpicture}

				\draw [Gray,densely dotted,thick] (0,.6) -- (0,-.6);
				\draw [Gray,densely dotted,thick] (3.5,.6) -- (3.5,-.6);

				\draw [BrickRed, thick] (0,0) -- (3.5,0);

				\draw [thick] (0,.6) -- (3.5,.6);
				\draw [thick] (0,-.6) -- (3.5,-.6);

			\begin{scope}[xshift=6cm]

				\draw [Gray,densely dotted,thick] ($(-90:2) + (0:.5)$) -- ($(-90:2) + (180:.5)$);
				\draw [Gray,densely dotted,thick] ($(30:2) + (120:.5)$) -- ($(30:2) + (-60:.5)$);
				\draw [Gray,densely dotted,thick] ($(150:2) + (240:.5)$) -- ($(150:2) + (60:.5)$);

				\draw [BrickRed, thick] (0,0) -- (30:2);
				\draw [BrickRed, thick] (0,0) -- (150:2);
				\draw [BrickRed, thick] (0,0) -- (-90:2);
				\node [BrickRed] at (0,0) {$\bullet$};

				\draw [thick] ($(-90:2) + (0:.5)$) to[out=90,in=210] ($(30:2) + (-60:.5)$);
				\draw [thick] ($(30:2) + (120:.5)$) to[out=210,in=-30] ($(150:2) + (60:.5)$);
				\draw [thick] ($(150:2) + (240:.5)$) to[out=-30,in=90] ($(-90:2) + (180:.5)$);

			\end{scope}
			\end{tikzpicture}
            \end{adjustbox}
			\subcaption{The local decomposition of a surface into ribbons and discs, according to the embedded ribbon graph.}
			\label{fig:ribbons:discs}
		\end{subfigure}
		\hspace{.5cm}%
		\begin{subfigure}[t]{.52\textwidth}
			\centering
            \begin{adjustbox}{width=\textwidth}
			\begin{tikzpicture}

				\draw [Gray,densely dotted,thick] (0,.6) -- (0,-.6);
				\draw [Gray,densely dotted,thick] (3.5,.6) -- (3.5,-.6);

				\fill [ForestGreen,opacity=.3] (0,.4) -- (3.5,.4) -- (3.5,-.4) -- (0,-.4) -- (0,.4);

				\draw [ForestGreen, thick] (0,.2) -- (3.5,.2);
				\draw [ForestGreen, thick] (0,-.2) -- (3.5,-.2);

				\draw [thick] (0,.6) -- (3.5,.6);
				\draw [thick] (0,-.6) -- (3.5,-.6);

			\begin{scope}[xshift=6cm]
				
				\draw [Gray,densely dotted,thick] ($(-90:2) + (0:.5)$) -- ($(-90:2) + (180:.5)$);
				\draw [Gray,densely dotted,thick] ($(30:2) + (120:.5)$) -- ($(30:2) + (-60:.5)$);
				\draw [Gray,densely dotted,thick] ($(150:2) + (240:.5)$) -- ($(150:2) + (60:.5)$);

				\fill [ForestGreen,opacity=.3] ($(-90:2) + (0:.35)$) to[out=90,in=210] ($(30:2) + (-60:.35)$) -- ($(30:2) + (120:.35)$) to[out=210,in=-30] ($(150:2) + (60:.35)$) -- ($(150:2) + (240:.35)$) to[out=-30,in=90] ($(-90:2) + (180:.35)$) -- ($(-90:2) + (0:.35)$);

				\draw [ForestGreen, thick] ($(150:2) + (240:.2)$) to[out=-30,in=90] ($(-90:2) + (180:.15)$);
				\draw [ForestGreen, thick] ($(150:2)$) to[out=-30,in=90] ($(-90:2) + (0:.15)$);
				\draw [ForestGreen, thick] ($(30:2)$) to[out=210,in=-30] ($(150:2) + (60:.2)$);

				\draw [thick] ($(-90:2) + (0:.5)$) to[out=90,in=210] ($(30:2) + (-60:.5)$);
				\draw [thick] ($(30:2) + (120:.5)$) to[out=210,in=-30] ($(150:2) + (60:.5)$);
				\draw [thick] ($(150:2) + (240:.5)$) to[out=-30,in=90] ($(-90:2) + (180:.5)$);

			\end{scope}
			\begin{scope}[xshift=10.5cm]

				\draw [Gray,densely dotted,thick] ($(-90:2) + (0:.5)$) -- ($(-90:2) + (180:.5)$);
				\draw [Gray,densely dotted,thick] ($(30:2) + (120:.5)$) -- ($(30:2) + (-60:.5)$);
				\draw [Gray,densely dotted,thick] ($(150:2) + (240:.5)$) -- ($(150:2) + (60:.5)$);

				\fill [ForestGreen,opacity=.3] ($(30:2) + (-60:.35)$) to[out=210,in=90] ($(-90:2) + (0:.35)$) -- ($(-90:2) + (180:.35)$) to[out=90,in=210] ($(30:2) + (120:.35)$) -- ($(30:2) + (-60:.35)$);

				\draw [ForestGreen, thick] ($(30:2) + (-60:.15)$) to[out=210,in=90] ($(-90:2) + (0:.15)$);
				\draw [ForestGreen, thick] ($(30:2) + (120:.15)$) to[out=210,in=90] ($(-90:2) + (180:.15)$);

				\draw [thick] ($(-90:2) + (0:.5)$) to[out=90,in=210] ($(30:2) + (-60:.5)$);
				\draw [thick] ($(30:2) + (120:.5)$) to[out=210,in=-30] ($(150:2) + (60:.5)$);
				\draw [thick] ($(150:2) + (240:.5)$) to[out=-30,in=90] ($(-90:2) + (180:.5)$);

			\end{scope}
			\end{tikzpicture}
            \end{adjustbox}
			\subcaption{
				The local representation of the thick neighbourhood of a multicurve.
			}
			\label{fig:thick:neigh}
		\end{subfigure}	
		\caption{
			The decomposition of a surface into ribbons and discs, and the thick neighbourhood of a multicurve.
		}
		\label{fig:ribbon:switch:thick:neigh}
	\end{figure}
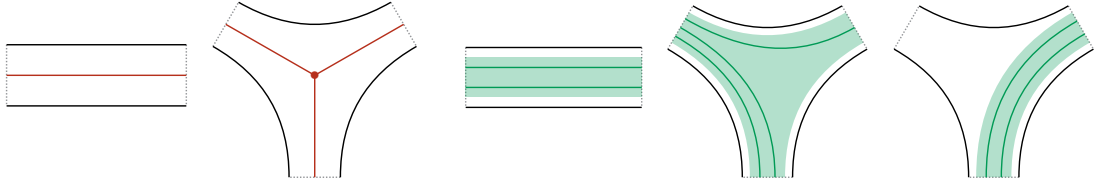

	We claim that $\beta$ is non-trivial and separating. Indeed, if $\beta$ were null-homotopic, then $\gamma$ would be null-homotopic as well (it would be contained within the subsurface bounded by $\beta$). 
	Moreover, if $\beta$ were non-separating, then the thick neighbourhood of $\gamma$ would be a collection of cylinders which contain $\gamma$, in contradiction with the fact that $\gamma$ traverses at least one ribbon more than once.

	Therefore, $\beta$ is a non-trivial separating multicurve. We append to the end of $\gamma$ the curves in $\beta$ that are not already in $\gamma$. Thus, we obtain an ordered separating multicurve, denoted by $\bar{\gamma}$.
	Although we have no control over the number of components of $\bar{\gamma}$, we do know that $\ell(\bar{\gamma}) \leq 3 \ell(\gamma)$. 
	Since the procedure is injective (we simply added curves at the end of the original tuple), we have $\bar{N}_{g,L,I}^{\circ} - N_{g,L,I}^{\circ} \leq \bar{N}_{g,L,\Delta_{\leq 3br}}^{\asymp}$, where $\bar{N}_{g,L,\Delta_{\leq C}}^{\asymp}$ counts separating multicurves whose total length is bounded by $C$.
	Note that $\bar{N}_{g,L,\Delta_{\leq C}}^{\asymp}$ is slightly differ from the counting $\bar{N}_{g,L,I}^{\asymp}$, since in the former case we do not fix the number curves. Nonetheless, the proof-strategy of \Cref{prop:asympNeqSumM} holds with no modifications, hence the claimed bound.
\end{proof}

\section{Proof of \texorpdfstring{\Cref{thm:sumEM}}{the main estimate}}
\label{sec:proof}

The goal of this section is to prove the main estimate on the sum over stable graphs of the expectation value of the auxiliary function $M_{A,B,\Gamma}$. We divide the proof in two parts: an estimate on the single expectation value $\EE[M_{A, B; \Gamma}]$, and an estimate on its sum over all separating stable graphs.

\subsection{Estimates on the auxiliary functions}
To prepare for the estimate on the expectation value of the auxiliary function, we start by giving a basic estimate.

\begin{lemma} \label{lem:KK}
	Let $\Gamma \in \ms{G}_{g,n}^{\sep}$. Then
	\begin{equation}
		\left( \frac{(6g-5+2n)!!}{g! \, 24^g} \right)^{-1}
		\prod_{v \in \ms{V}(\Gamma)}
			\frac{(6g_v+5+2n_v)!!}{g_v! \, 24^{g_v}}
		\leq
		\frac{4^{2- |\ms{V}(\Gamma)| + |\ms{E}(\Gamma)|}}{(2g-2+n)!} \prod_{v \in \ms{V}(\Gamma)} (2g_v - 2 + n_v)!.
	\end{equation}
\end{lemma}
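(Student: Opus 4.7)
The plan is to reduce the inequality, by elementary manipulation, to a clean combinatorial estimate on ratios of factorials, then close it using super-multiplicativity of multinomial coefficients and the standard bound $\binom{2m}{m} \le 4^m$.

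First, I would convert every double factorial via $(2k-1)!! = (2k)!/(2^k\,k!)$, so that both sides become products of ordinary factorials decorated with explicit powers of $2$ and $24$. Then I would invoke the three fundamental identities satisfied by any stable graph $\Gamma \in \ms{G}_{g,n}^{\sep}$: the genus identity $g - \sum_v g_v = h^1(\Gamma) = |\ms{E}(\Gamma)| - |\ms{V}(\Gamma)| + 1$, the leaf count $\sum_v n_v = n + 2|\ms{E}(\Gamma)|$, and the Euler-characteristic identity $\sum_v (2g_v - 2 + n_v) = 2g - 2 + n$. The genus identity immediately gives $24^g / \prod_v 24^{g_v} = 24^{h^1(\Gamma)}$, so matching against the target $4^{E-V+2} = 4 \cdot 4^{h^1(\Gamma)}$ asks us to absorb an excess factor of $6^{h^1(\Gamma)} = (24/4)^{h^1(\Gamma)}$ into the remaining factorials, with an extra $4$ of slack. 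The Euler-characteristic identity lets me rewrite $(2g-2+n)!/\prod_v (2g_v-2+n_v)!$ as the honest multinomial coefficient $\binom{2g-2+n}{(2g_v-2+n_v)_v}$.

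With these reductions the inequality becomes a comparison between two multinomial-type factors and a residual ratio of double factorials. I would process the latter using the identity $(2k-1)!!/k! = \binom{2k}{k}/2^k$ and then apply the Vandermonde-type inequality $\prod_v \binom{A_v}{B_v} \le \binom{\sum A_v}{\sum B_v}$, together with the central binomial bound $\binom{2m}{m} \le 4^m$ at the scale $m = 3g-2+n$, to dominate the vertex-product by its global analogue at the cost of controlled powers of $2$ and $3$.

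The main obstacle is a mild arithmetic mismatch: the sums $\sum_v (6g_v + 5 + 2n_v)$ and $\sum_v (3g_v + 3 + n_v)$ differ from $6g - 5 + 2n$ and $3g - 2 + n$ by quantities proportional to $h^1(\Gamma)$, so Vandermonde does not apply on the nose. I expect each unit of mismatch to contribute at most a uniformly bounded multiplicative factor, and their collective contribution to recover exactly the $6^{h^1(\Gamma)}$ needed to absorb the $(24/4)^{h^1(\Gamma)}$ surplus identified above. The residual factor of $4$ in $4^{E-V+2}$ then provides the slack required by the base case $V=2$, $E=1$ (a single edge separating two subsurfaces), which can be checked directly by hand as a sanity test.
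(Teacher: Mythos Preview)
Your overall plan---unpack the double factorials, invoke the stable-graph identities, and apply super-multiplicativity of multinomial coefficients---is the paper's strategy as well. The difference lies in how the combinatorics is organised, and your endgame is where the argument is incomplete.

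The paper does not pass through binomials or the central-binomial bound. It uses the \emph{trinomial} identity
\[
	\frac{(6g_v-5+2n_v)!!}{g_v!\,24^{g_v}}
	\;=\;
	\binom{6g_v-5+2n_v}{3g_v-3+n_v,\ 2g_v-2+n_v,\ g_v}\,
	\frac{(2g_v-2+n_v)!}{2^{3g_v-3+n_v}\,24^{g_v}},
\]
which produces the target factor $(2g_v-2+n_v)!$ exactly, with no loss. Super-multiplicativity (your ``Vandermonde-type inequality''; the paper cites it as \cite[Lemma~2.3]{Agg21}) then collapses the product of trinomials to the single shifted trinomial $\binom{6g-6+2n+\ms{v}-2\ms{e}}{3g-3+n-\ms{e},\ 2g-2+n,\ g-1+\ms{v}-\ms{e}}$. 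The ``mismatch'' you anticipate is resolved not by an unspecified bounded factor per unit of $h^1$, but by three explicit factorial-ratio bounds of the shape $N!/(N-k)! \le N^{k}$, each legitimate because connectivity forces $\ms{v}\le \ms{e}+1$; multiplied together they produce precisely $4^{2-\ms{v}+\ms{e}}$ and absorb the $6^{h^1}$ deficit you correctly identified.

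Your proposed tool $\binom{2m}{m}\le 4^m$ at $m=3g-2+n$ is where I see a genuine risk. The global double factorial sits in the \emph{denominator} of the left-hand side, so to exploit that bound you would need a matching \emph{lower} bound on $\binom{2m}{m}$, and the standard one $\binom{2m}{m}\ge 4^m/\sqrt{\pi m}$ costs a stray factor of order $\sqrt{g}$. That loss would propagate through \Cref{prop:EMLeq} and degrade the $\bO(g^{-1/2})$ in \Cref{thm:sumEM} to $\bO(1)$, which is not enough to feed into the method of moments. The trinomial route avoids this by keeping numerator and denominator coupled as a single ratio of multinomials and bounding that ratio directly.
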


\begin{proof}
	For ease of notation, write $\ms{v} = |\ms{V}(\Gamma)|$ and $\ms{e} = |\ms{E}(\Gamma)|$. The relations
	\[
		\sum_{v \in \ms{V}(\Gamma)} g_v = g-1+\ms{v}-\ms{e} ,
		\qquad
		\sum_{v \in \ms{V}(\Gamma)} n_v = n+2\ms{e} ,
		\quad
		\prod_{i=1}^{m} \binom{ A_i }{ A_{i,1}, \dots, A_{i,r} }
		\leq
		\binom{\sum_{i} A_{i} }{\sum_{i} A_{i,1}, \dots, \sum_{i} A_{i,r}} ,
	\]
	(see \cite[Lemma~2.3]{Agg21} for the last inequality) imply
	\begin{multline*}
		\prod_{v \in \ms{V}(\Gamma)}
			\frac{ (6g_v-5+2n_v)!! }{ g_v! \, 24^{g_v} }
		=
		\prod_{v \in \ms{V}(\Gamma)}
			\binom{ 6g_v-5+2n_v }{ 3g_v-3+n_v, \ 2g_v-2+n_v, \ g_v }
			\frac{ (2g_v-2+n_v)! }{ 2^{3g_v-3+n_v} \, 24^{g_v} } \\
		\leq
		\frac{1}{ 2^{3g-3+n-\ms{e}} \, 24^{g-1+\ms{v}-\ms{e}} }
		\binom{ 6g-6+2n+\ms{v}-2\ms{e} }{ 3g-3+n-\ms{e}, \ 2g-2+n, \ g-1+\ms{v}-\ms{e} }
		\prod_{v \in \ms{V}(\Gamma)} (2g_v-2+n_v)! .
	\end{multline*}
	The connectivity of $\Gamma$ implies $\ms{v} \leq \ms{e} + 1$, hence
	\[
		\frac{(6g-6+2n+\ms{v}-2\ms{e})!}{(6g-5+2n)!}
		\leq
		\frac{1}{(6g-5+2n)^{1- \ms{v} + 2\ms{e}}},
	\]
	\[
		\frac{(3g-3+n)!}{(3g-3+n-\ms{e})!}
		\leq
		(3g-3+n)^{\ms{e}},
		\qquad 
		\frac{g!}{(g-1 + \ms{v} - \ms{e})!}
		\leq
		2(g-1)^{1 - \ms{v} + \ms{e}}.
	\]
	We then conclude that
	\begin{multline*}
		2^{\ms{e}} \, 24^{1- \ms{v} + \ms{e}} \,
		\frac{(6g-6+2n+\ms{v}-2\ms{e})!}{(6g-5+2n)!} \,
		\frac{(3g-3+n)!}{(3g-3+n-\ms{e})!} \,
		\frac{g!}{(g-1+\ms{v}-\ms{e})!} \\
		\leq
		2 \cdot 4^{1-\ms{v}+\ms{e}} \,
		\frac{(6g-6+2n)^{\ms{e}} \, (6g-6)^{1- \ms{v} + \ms{e}}}{(6g-6+2n)^{1- \ms{v} + 2\ms{e}}}
		\leq
		2 \cdot 4^{1 - \ms{v} + \ms{e}}
		\left( \frac{6g-6}{6g-6+2n} \right)^{1 - \ms{v} + \ms{e}}
		\leq
		4^{2 - \ms{v} + \ms{e}} ,
	\end{multline*}
	which proves the lemma.
\end{proof}

\begin{proposition} \label{prop:EMLeq}
	For any constants $A \in \RR_{\ge 1}$ and $B \in \RR_{>0}$, there exists $C = C(A, B, n) > 0$ such that for all $g \gg 0$ and $\Gamma \in \ms{G}_{g,n}^{\sep}$, the following estimate holds:
	\begin{equation}
		\EE\left[ M_{A, B; \Gamma} \right]
		\leq
		\frac{\sqrt{g}}{|\Aut(\Gamma)|} \,
		\frac{C^{|\ms{E}(\Gamma)|}}{(2|\ms{E}(\Gamma)|)!} \,
		\frac{\prod_{v \in \ms{V}(\Gamma)} (2g_v - 2 + n_v)!}{(2g-2+n)!} .
	\end{equation}
\end{proposition}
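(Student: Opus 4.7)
The strategy mirrors that of \cite{MP19}: apply the integration formula (\Cref{thm:int:formula}), bound each volume factor using the uniform bound on intersection numbers (\Cref{thm:uniBd}) together with positivity of Taylor coefficients, and then assemble using \Cref{lem:KK} and \Cref{cor:VV}.

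\emph{Setup.} Recognising $M_{A,B;\Gamma}$ as $F_{\Gamma}$ with $F = \mathds{1}_{|\ell|\leq B}$ times a constant prefactor, \Cref{thm:int:formula} gives
\begin{equation*}
	\EE[M_{A,B;\Gamma}] = \frac{2^{|\ms{V}(\Gamma)|+|\ms{E}(\Gamma)|} \prod_{v \in \ms{V}_B(\Gamma)} A^{6g_v-6+3n_v}}{|\Aut(\Gamma)| \cdot V_{g,n}(L)} \int_{\substack{\ell > 0 \\ |\ell| \leq B}} \prod_v V_{g_v,n_v}(L_v, \ell_v) \prod_s \ell_s \, d\ell_s.
\end{equation*}

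\emph{Pointwise volume bounds.} Write $N_v \coloneqq 6g_v-6+2n_v$ and $\rho \coloneqq (6g-6+3n)/|L|$, the saddle-point value of \Cref{cor:VV}. For $v \notin \ms{V}_B$, combining \Cref{thm:uniBd}, the Kontsevich formula, and the positivity bound $[z^N] p(z) \leq p(\sigma)/\sigma^N$ at $\sigma = \rho$ yields
\begin{equation*}
	V_{g_v,n_v}(L_v, \ell_v) \leq \left(\tfrac{3}{2}\right)^{n_v-1} \frac{(6g_v-5+2n_v)!!}{g_v!\,24^{g_v}} \rho^{-N_v} \prod_{x \in L_v \cup \ell_v} \frac{\sinh(x\rho)}{x\rho}.
\end{equation*}
For $v \in \ms{V}_B$, all entries of $L_v$ (and within the simplex, of $\ell_v$) are bounded by $B$, so by monotonicity and the homogeneity identity $A^{N_v} V_{g_v,n_v}(B,\ldots,B) = V_{g_v,n_v}(AB,\ldots,AB)$ one obtains $A^{N_v} V_{g_v,n_v}(L_v,\ell_v) \leq V_{g_v,n_v}(AB,\ldots,AB)$; applying \Cref{thm:uniBd} and positivity at $\sigma = \rho$ to the right-hand side then produces
\begin{equation*}
	A^{6g_v-6+3n_v} V_{g_v,n_v}(L_v, \ell_v) \leq \left(\tfrac{3}{2}\right)^{n_v-1} \frac{(6g_v-5+2n_v)!!}{g_v!\,24^{g_v}} \rho^{-N_v} \left(\frac{\sinh(AB\rho)}{B\rho}\right)^{n_v}.
\end{equation*}
The crucial feature is that both cases share the $\rho^{-N_v}$ factor, which is essential for cancellation against $\rho^{6g-6+3n}$ from $V_{g,n}(L)^{-1}$.

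\emph{Assembly.} In the simplex, the pointwise inequality $\sinh(\ell\rho)/(\ell\rho) \leq \sinh(B\rho)/(B\rho)$ removes the $\ell$-dependence, so $\int_{\ell > 0, \sum \ell_s \leq B} \prod_s \ell_s \, d\ell_s = B^{2|\ms{E}|}/(2|\ms{E}|)!$, providing the desired $(2|\ms{E}|)!$ denominator. Then \Cref{lem:KK} converts $\prod_v (6g_v-5+2n_v)!!/(g_v!\,24^{g_v})$ into $\frac{(6g-5+2n)!!}{g!\,24^g} \cdot \frac{4^{2-|\ms{V}|+|\ms{E}|}\prod(2g_v-2+n_v)!}{(2g-2+n)!}$, and \Cref{cor:VV} bounds $V_{g,n}(L)^{-1} \leq 2\sqrt{3\pi g} \cdot \frac{g!\,24^g}{(6g-5+2n)!!} \rho^{6g-6+3n} \prod_i L_i/\sinh(L_i\rho)$ for $g \gg 0$. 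Using $\sum_v N_v = 6g-6+2n-2|\ms{E}|$, the net exponent of $\rho$ reduces to $2|\ms{E}|$, which is bounded since $\rho \to 1/(2\mu)$. The factors $\sinh(L_i\rho)/L_i$ cancel for leaves attached to $v \notin \ms{V}_B$, while for $v \in \ms{V}_B$ the bound $L_i/\sinh(L_i\rho) \leq 1/\rho$ suffices (as $L_i \leq B$). All remaining exponential-in-$|\ms{E}|$ contributions, namely $(\sinh(AB\rho)/(B\rho))^{n_v}$, $(\sinh(B\rho)/(B\rho))^{|\ell_v|}$, $2^{|\ms{V}|+|\ms{E}|}$, $4^{|\ms{E}|-|\ms{V}|+2}$, $(3/2)^{n_v-1}$, and $B^{2|\ms{E}|}$, collapse into a single constant $C = C(A,B,n) > 0$ raised to $|\ms{E}(\Gamma)|$, using $|\ms{V}| \leq |\ms{E}|+1$ and $\sum_v n_v = n + 2|\ms{E}|$.

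\emph{Main obstacle.} The critical technical point is controlling $A^{6g_v-6+3n_v}$ for $v \in \ms{V}_B$, which grows unboundedly in $g_v$. A naive choice (e.g.\ $\sigma = A$ or $\sigma = AB$ in the positivity bound) absorbs the $A^{N_v}$ part but sacrifices the $\rho^{-N_v}$ factor, leaving an irreparable residual $\rho^{N_v}$ that cannot be bounded uniformly in the stable graph. The homogeneity trick $A^{N_v} V_{g_v,n_v}(B,\ldots,B) = V_{g_v,n_v}(AB,\ldots,AB)$ combined with the same positivity evaluation $\sigma = \rho$ used for $v \notin \ms{V}_B$ neutralises $A^{N_v}$ by pushing it into the arguments of the Kontsevich polynomial, while preserving the matching $\rho^{-N_v}$ factor required for the final cancellation.
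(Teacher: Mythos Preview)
Your proposal is correct and follows essentially the same route as the paper. The only organisational difference is that you apply the positivity bound $[z^{N}]p(z)\le p(\rho)/\rho^{N}$ \emph{per vertex}, whereas the paper first merges the vertex extractions into a single $[z^{3\chi}]$ via $\prod_v [z^{3\chi_v}]f_v \le [z^{3\chi}]\prod_v f_v$ and then evaluates at $\rho$; the resulting upper bounds coincide. Your ``homogeneity trick'' $A^{N_v}V_{g_v,n_v}(B,\dots,B)=V_{g_v,n_v}(AB,\dots,AB)$ is exactly the paper's rescaling $z\mapsto Az$ inside the $\sinh$-factors attached to vertices of $\ms{V}_B(\Gamma)$, phrased at the level of the Kontsevich polynomial rather than its generating series. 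Your bookkeeping of the residual $\rho$-power (net exponent $2|\ms{E}|$) and the collapse of all remaining factors into $C^{|\ms{E}|}$ via $|\ms{V}|\le |\ms{E}|+1$ and $\sum_v n_v=n+2|\ms{E}|$ matches the paper's assembly step.
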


\begin{proof}
	For ease of notation, write $\ms{V} = \ms{V}(\Gamma)$, $\ms{V}_B = \ms{V}_B(\Gamma)$, $\ms{E} = \ms{E}(\Gamma)$, $\chi = 2g-2+n$, and $\chi_v = 2g_v-2+n_v$ for every $v \in \ms{V}$.
	Applying the integration formula \labelcref{eq:intFor}, we deduce that
	\[
		\EE\left[ M_{A, B; \Gamma} \right]
		=
		\frac{1}{V_{g,n}(L)}
		\frac{2^{|\ms{V}| + |\ms{E}|}}{|\Aut(\Gamma)|}
		\Bigg( \prod_{v \in \ms{V}_B} A^{3\chi_v} \Bigg)
		\int_{\Delta^{\ms{E}}_{\leq B}}
			V_{\Gamma}(L,\ell)
			\prod_{e \in \ms{E}} \ell_e \, d\ell_e 
	\]
	where $\Delta^{\ms{E}}_{\leq B} = \set{ \ell \in \RR_{>0}^{\ms{E}} : \sum_e \ell_e \le B}$. Applying \Cref{thm:uniBd} to bound the intersection numbers in the integrand, we obtain
	\begin{multline*}
		\EE\left[ M_{A, B; \Gamma} \right]
		\leq
		\frac{1}{V_{g,n}(L)}
		\frac{2^{|\ms{V}| + |\ms{E}|}}{|\Aut(\Gamma)|}
		\left( \frac{3}{2} \right)^{n+2|\ms{E}| - |\ms{V}|}
		\left( \prod_{v \in \ms{V}_B} A^{3\chi_v} \right) \\
		\times
		\int_{\Delta^{\ms{E}}_{\leq B}}
			\prod_{v \in \ms{V}} \frac{(6g_v-5+2n_v)!!}{g_v! \, 24^{g_v}}
			[z^{3\chi}]
			\prod_{i=1}^n \frac{\sinh(L_i z)}{L_{\lambda}}
			\prod_{e \in \ms{E}} \left( \frac{\sinh(\ell_e z)}{\ell_e} \right)^2 \ell_e \, d\ell_e .
	\end{multline*}
	We can bound part of the right-hand side with the first estimate from \Cref{cor:VV} and with \Cref{lem:KK}: setting $\rho \coloneqq (6g-6+3n)/|L| \sim 1/(2\mu)$, we have
	\[
		\frac{1}{V_{g,n}(L)}
		\frac{2^{|\ms{V}| + |\ms{E}|}}{|\Aut(\Gamma)|}
		\left( \frac{3}{2} \right)^{n+2|\ms{E}| - |\ms{V}|}
		\prod_{v \in \ms{V}} \frac{(6g_v-5+2n_v)!!}{g_v! \, 24^{g_v}}
		\le
		\frac{\sqrt{g}}{|\Aut(\Gamma)|}
		C_1^{|\ms{E}|}
		\frac{\prod_{v \in \ms{V}} \chi_{v}!}{\chi!}
		\rho^{3\chi}
		\prod_{i=1}^n \frac{L_i}{\sinh(L_i \rho)}
	\]
	for some $C_1 = C_1(n) > 0$ and $g \gg 0$. Here we also used the inequality $|\ms{V}| \le |\ms{E}| + 1$ to write the overall constant as a power of the number of edges. On the other hand, for all $A \in \RR_{\ge 1}$: 
	\begin{multline*}
		[z^{3\chi}]
			\prod_{v \in \ms{V}_B} A^{3\chi_v}
			\prod_{i=1}^n \frac{\sinh(L_i z)}{L_{\lambda}}
			\prod_{e \in \ms{E}} \left( \frac{\sinh(\ell_e z)}{\ell_e} \right)^2 \\
		\leq
		[z^{3\chi}]
			\left( \prod_{v \in \ms{V}_B} \prod_{\lambda \in \ms{\Lambda}(v)}
				\frac{\sinh(L_v A z)}{L_v}
			\right)
			\left( \prod_{v \not\in \ms{V}_B} \prod_{\lambda \in \ms{\Lambda}(v)}
				\frac{\sinh(L_v z)}{L_v}
			\right)
			\prod_{e \in \ms{E}} \left( \frac{\sinh(\ell_e A z)}{\ell_e} \right)^2 .
	\end{multline*}
	Here $\ms{\Lambda}(v)$ denotes the set of leaves attached to a vertex $v$. For any given convergent power series $\sum_{m \geq 0} a_m z^m$ with $a_m \geq 0$, we have $a_k \leq \rho^{-k} \sum_{m \geq 0} a_m \rho^m$ for all choices of $\rho > 0$ (within the disc of convergence). Thus, we find that for all $\ell \in \Delta_{\le B}^{\ms{E}}$:
	\begin{multline*}
		[z^{3\chi}]
			\prod_{v \in \ms{V}_B} A^{3\chi_v}
			\prod_{i=1}^n \frac{\sinh(L_i z)}{L_{\lambda}}
			\prod_{e \in \ms{E}} \left( \frac{\sinh(\ell_e z)}{\ell_e} \right)^2 \\
		\leq
		\rho^{-3\chi}
		\left( \prod_{v \in \ms{V}_B} \prod_{\lambda \in \ms{\Lambda}(v)}
				\frac{\sinh(L_v A \rho)}{L_v}
		\right)
		\left( \prod_{v \not\in \ms{V}_B} \prod_{\lambda \in \ms{\Lambda}(v)}
			\frac{\sinh(L_v \rho)}{L_v}
		\right)
		\prod_{e \in \ms{E}} \left( \frac{\sinh(\ell_e A \rho)}{\ell_e} \right)^2 \\
		\leq
		C_2^{|\ms{E}|}
		\rho^{-3\chi}
		\prod_{i=1}^{n} \frac{\sinh(L_i \rho)}{L_i}
	\end{multline*}
	for some $C_2 = C_2(A,B,n) > 0$ and $g \gg 0$. In the first inequality we chose $\rho = (6g-6+3n)/|L|$ as above, while in the second inequality we used the fact that for all $v \in V_B$ and $\ell \in \Delta^{\ms{E}}_{\leq B}$,
	\[
		\prod_{v \in \ms{V}_B} \prod_{\lambda \in \ms{\Lambda}(v)}
			\frac{\sinh(L_v A \rho)}{L_v}
		\le
		c_1 \prod_{v \in \ms{V}_B} \prod_{\lambda \in \ms{\Lambda}(v)}
			\frac{\sinh(L_v \rho)}{L_v} ,
		\qquad\qquad
		\prod_{e \in \ms{E}} \left( \frac{\sinh(\ell_e A \rho)}{\ell_e} \right)^2
		\le
		c_2^{|\ms{E}|} ,
	\]
	for some $c_i = c_i(A,B,n) > 0$ and $g \gg 0$. To conclude, we apply the identity $\int_{\Delta^{\ms{E}}_{\leq B}} \prod_{e \in \ms{E}} \ell_e \, d\ell_e = \frac{B^{2|\ms{E}|}}{(2|\ms{E}|)!}$.
\end{proof}

\subsection{Summing over all topologies}
We are now ready to estimate the sum of $\EE[M_{A,B;\Gamma}]$ when $\Gamma$ runs over all separating stable graphs. To prepare for it, let us start with a basic estimate on a sum of reciprocals of multinomial coefficients.

\begin{lemma} \label{lem:K}
	Let $2\leq k \leq n$ be integers. The following bound holds true:
	\begin{equation} \label{eq:K}
		\sum_{\substack{(n_1, \dots, n_k) \in \ZZ_{\geq 1}^k \\ n_1 + \cdots + n_k = n}}
			\frac{n_1! \cdots n_k!}{n!}
		\le
		\frac{4}{n} .
	\end{equation}
\end{lemma}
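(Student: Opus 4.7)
Denote the sum on the left of \Cref{eq:K} by $S_k(n)$. The plan is to induct on $k$, starting from the base case $k = 2$, using the recursion obtained by singling out the last part of the composition:
\[
    S_k(n) = \sum_{j=1}^{n-k+1} \binom{n}{j}^{-1} S_{k-1}(n-j).
\]

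For the base case $k = 2$, I would write $S_2(n) = \sum_{j=1}^{n-1} \binom{n}{j}^{-1}$ and separate the two extremal terms $j \in \{1, n-1\}$ (each equal to $1/n$, contributing $2/n$ in total) from the middle terms $2 \leq j \leq n-2$. Bounding the latter uniformly by $\binom{n}{2}^{-1} = 2/(n(n-1))$ gives a total contribution of at most $2(n-3)/(n(n-1)) \leq 2/n$, and hence $S_2(n) \leq 4/n$; the cases $n \in \{2, 3\}$ would be verified directly.

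For the inductive step with $k \geq 3$, I would plug the inductive bound $S_{k-1}(n-j) \leq 4/(n-j)$ into the recursion (valid since $n-j \geq k-1$ throughout the range) and invoke the elementary identity $\binom{n}{j}(n-j) = n \binom{n-1}{j}$ to obtain
\[
    S_k(n) \leq \frac{4}{n}\sum_{j=1}^{n-k+1} \binom{n-1}{j}^{-1}.
\]
It then suffices to show that the remaining sum is bounded by $1$. Since $k \geq 3$ confines the range of $j$ to $\{1, \dots, n-2\}$, this sum is dominated by $S_2(n-1) \leq 4/(n-1)$, which is $\leq 1$ as soon as $n \geq 5$; the finitely many remaining small cases $n \in \{3,4\}$ would be checked by hand. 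The main delicacy is closing the induction cleanly: the identity $\binom{n}{j}(n-j) = n \binom{n-1}{j}$ is precisely what allows the factor $1/n$ to factor out of the recursion, so that the residual sum stays bounded by an absolute constant instead of growing with $k$.
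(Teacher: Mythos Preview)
Your proof is correct and runs closely parallel to the paper's: both argue by induction on $k$ with an identical treatment of the base case $k=2$ (separating the two extremal terms $j\in\{1,n-1\}$ and bounding each middle term by $\binom{n}{2}^{-1}$). The inductive steps are organised differently. The paper groups the last \emph{two} parts together and invokes the $k=2$ bound in the form $\sum_{p+q=r}p!\,q!\le 4(r-1)!$ to obtain $F(n,k)\le\frac{4}{n}F(n-1,k-1)$, then iterates this down to $F(n-k+2,2)$. You instead peel off a single part $n_k=j$ and use the identity $\binom{n}{j}(n-j)=n\binom{n-1}{j}$ to extract the factor $4/n$ in one stroke, leaving the residual $\sum_{j\le n-k+1}\binom{n-1}{j}^{-1}\le S_2(n-1)\le 4/(n-1)\le 1$ for $n\ge 5$. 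Your route is marginally more transparent: the binomial identity makes the $1/n$ factor appear cleanly, and the induction closes explicitly, whereas the paper's iterated chain $F(n,k)\le F(n-k+2,2)$ still needs a further word to recover the $4/n$ bound.
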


\begin{proof}
	Denote by $F(n,k)$ the left-hand side of \Cref{eq:K}. Let us first prove the claimed bound holds for $k = 2$. We start by rewriting $F(n,2)$ as
	\[
		F(n,2)
		=
		\sum_{\substack{p,q \ge 1 \\ p+q=n}} \frac{p! q!}{n!}
		=
		\frac{2}{n} + \sum_{\substack{p,q \ge 2 \\ p+q=n}} \frac{p! q!}{n!} .
	\]
	In the last sum there are $n-3$ summands, each bounded by $\frac{2}{n(n-1)}$. Thus, the last sum is bounded by $\frac{2(n-3)}{n(n-1)} \le \frac{2}{n}$, proving the claimed bound. Let us now prove that $F(n,k) \le F(n,2)$. We have
	\[
		F(n,k)
		=
		\frac{1}{n!} \sum_{r=2}^{n-(k-2)}
		\sum_{\substack{(n_1, \dots, n_{k-2}) \in \ZZ_{\geq 1}^{k-2} \\ n_1 + \cdots + n_{k-2} = n-r}}
			\frac{n_1! \cdots n_{k-2}!}{n!}
			\sum_{\substack{p,q \ge 1 \\ p+q=r}} p! q! .
	\]
	The first part of the proof implies that the innermost sum is bounded by $4(r-1)!$. Thus, after a relabelling of the index $r$ as $n_{k-1} + 1$, we find
	\[
		F(n,k) \le
		\frac{4}{n!}
		\sum_{\substack{(n_1, \dots, n_{k-1}) \in \ZZ_{\geq 1}^{k-1} \\ n_1 + \cdots + n_{k-1} = n-1}}
			n_1! \cdots n_{k-1}!
		=
		\frac{4}{n} F(n-1,k-1)
		\le F(n-1,k-1) ,
	\]
	where the last inequality holds for $n \ge 4$. By repeatedly applying the above inequality, we find $F(n,k) \le F(n-k+2,2) \le \frac{4}{n}$, hence the thesis. The cases with $n < 4$ can be checked independently.
\end{proof}

We are now ready to prove \Cref{thm:sumEM}. For convenience, the proof makes use of labelled stable graphs, that is a stable graph $\Gamma$ together with bijections $\ms{V}(\Gamma) \to \{ 1, \dots, |\ms{V}(\Gamma)| \}$ and $\ms{H}(\Gamma) \to \{ 1, \dots, |\ms{H}(\Gamma)| \}$.

\begin{proof}[{Proof of \Cref{thm:sumEM}}]
	Let $\Gamma$ be a stable graph of type $(g, n)$ with vertex set $\ms{V}$, half-edges set $\ms{H}$, and edge set $\ms{E}$.
	Denote by $S(\ms{\ms{V}})$ and $S(\ms{H})$ the symmetric group over $\ms{V}$ and $\ms{H}$ respectively.
	Write $S(\Gamma) = S(\ms{V}) \times S(\ms{H})$, and write $\pi$ for the projection that maps a labelled stable graph to its underlying stable graph.
	The group $S(\Gamma \mkern1mu )$ acts on $\pi^{-1}(\Gamma)$ by permuting the labels, and the stabiliser of any element in $\pi^{-1}(\Gamma)$ is isomorphic to $\Aut(\Gamma)$.
	Thus, we have $|\pi^{-1}(\Gamma)| = |S(\Gamma)| / |\Aut(\Gamma)|$, so for any function $f$ defined on the set of labelled stable graphs that is constant along the fibres of $\pi$, we have
	\[
		\sum_{\Gamma \in \ms{G}_{g,n}^{\sep}} \frac{f(\Gamma)}{|\Aut(\Gamma)|}
		=
		\sum_{\Gamma}
			\frac{f(\Gamma)}{|\ms{V}|! \, |\ms{H}|!}.
	\]
	The sum on the right-hand side runs over the set of labelled separating stable graphs of type $(g, n)$.
	In light of \Cref{prop:EMLeq}, we consider the following choice of $f$:
	\[
		f(\Gamma)
		\coloneqq
		\sqrt{g} \,
		\frac{C^{|\ms{E}|}}{(2|\ms{E}|)!}
		\frac{\prod_{v \in \ms{V}} (2g_v-2+n_v)!}{(2g-2+n)!}.
	\]
	Note that $f(\Gamma)$ depends only on $\ms{e} = |\ms{E}|$, the genus decoration $(g_v)_{v \in \ms{V}}$, and the valency decoration $(n_v)_{v \in \ms{V}}$, so we can write $f(\ms{e}, (g_v), (n_v))$ for $f(\Gamma)$.

	Given $m,k \in \ZZ_{\geq 1}$, we write
	\begin{align*}
		\ms{C}_0(m,k)
		& \coloneqq \Set{ (m_1, \dots, m_k) \in \ZZ_{\geq 0}^k | m_1 + \cdots + m_k = m }, \\
		\ms{C}_1(m,k)
		& \coloneqq \Set{ (m_1, \dots, m_k) \in \ZZ_{\geq 1}^k | m_1 + \cdots + m_k = m }.
	\end{align*}
	We first claim that
	\begin{multline*}
		\sum_{\Gamma}
			\frac{f(\Gamma)}{|\sf{V}|! \, |\ms{H}|!}
		\leq
		\sum_{\substack{ \ms{v} \ge 2, \, \ms{e} \ge 1 \\ \ms{v} \leq \ms{e}+1 }} 
			\frac{1}{\ms{v}! \, (2\ms{e})!}
				\sum_{\substack{ s \ge 0, \, t \geq 1 \\ s+t = \ms{e} }}
				\sum_{\substack{
					\ms{g} \in \ms{C}_0(g-1-\ms{e}+\ms{v}, \ms{v}) \\
					\ms{n} \in \ms{C}_0(n, \ms{v}) \\
					\ms{s} \in \ms{C}_0(s, \ms{v}), \ms{t} \in \ms{C}_1(2t, \ms{v})
				}} \\
				\times
					\binom{n}{n_1, \dots, n_{\ms{v}}}
					\binom{2\ms{e}}{2s_1+t_1, \dots, 2s_{\ms{v}}+t_{\ms{v}}}
					\left( \prod_{i=1}^{\ms{v}} \binom{2s_i+t_i}{2s_i} (2s_i-1)!! \right)
					(2t-1)!!
					\,
					f(\ms{e},\ms{g},\ms{n}+2\ms{s}+\ms{t}) .
	\end{multline*}
	Indeed, any labelled stable graph of type $(g,n)$ having $\ms{v} \geq 2$ vertices and $\ms{e} \geq 1$ edges, out of which $s \geq 0$ are self-loops and $t \ge 1$ are not, can be constructed as follows.
	Fix $\ms{v}$ vertices and decorate them with their genus $\textsf{g} \in \ms{C}_0(g-1-\ms{e}+\ms{v}, \ms{v})$. Fix a splitting of the leaves, that is $\ms{n} \in \ms{C}_0(n, \ms{v})$ and distribute the labels. This can be achieved in $\binom{n}{n_1, \dots, n_{\textsf{v}}}$ different ways.
	Second, fix a splitting of the half-edges that are not leaves as self-loops and non-self-loops, that is $\textsf{s} \in \ms{C}_0(s, \ms{v})$ and $\textsf{t} \in \ms{C}_1(2t, \ms{v})$. Attach $2s_i + t_i$ half-edges to the $i$-th vertex; there are $\binom{2\ms{e}}{2s_1 + t_1, \dots, 2s_{\textsf{v}} + t_{\textsf{v}}}$ ways to label them.
	Third, among the $2s_i + t_i$ half-edges attached to the $i$-th vertex, choose $2s_i$ of them and pair them up to form $s_i$ self-loops; there are $\binom{2s_i + t_i}{2s_i} (2s_i-1)!!$ ways to do so.
	Finally, we tie the remaining $2t$ half-edges two-by-two to form $t$ edges connecting distinct vertices;
	there are at most $(2t-1)!!$ ways to do so. This is an overestimate, since we may create self-loops by connecting the last half-edges, or create a disconnected graph, or create a graph where the stability condition does not hold. Nonetheless, this proves the above claim.

	Denote $\chi = 2g-2+n$. With our choice of $f$, we find:
	\begin{align*}
		\sum_{\Gamma}
			\frac{f(\Gamma)}{|\ms{V}|! \, |\ms{H}|!}
		& \leq
		\sqrt{g}
		\sum_{\substack{ \ms{v} \ge 2, \, \ms{e} \ge 1 \\ \ms{v} \leq \ms{e}+1 }} 
			\frac{C^{\ms{e}}}{\ms{v}! \, (2\ms{e})!}
			\sum_{\substack{ s \ge 0, \, t \geq 1 \\ s+t = \ms{e} }}
				\frac{n! \, (2t-1)!!}{2^s \, \chi!}
				\mkern-25mu \sum_{\substack{
					\ms{g} \in \ms{C}_0(g-1-\ms{e}+\ms{v}, \ms{v}) \\
					\ms{n} \in \ms{C}_0(n, \ms{v}) \\
					\ms{s} \in \ms{C}_0(s, \ms{v}), \ \ms{t} \in \ms{C}_1(2t, \ms{v})
				}}
					\prod_{i=1}^{\ms{v}} \frac{(2g_i-2+n_i+2s_i+t_i)!}{n_i! \, s_i! \, t_i!} \\
		& \leq
		\sqrt{g}
		\sum_{\substack{ \ms{v} \ge 2, \, \ms{e} \ge 1 \\ \ms{v} \leq \ms{e}+1 }}  
			\frac{C^{\ms{e}}}{\ms{v}! \, (2\ms{e})!}
			\sum_{\substack{ s \ge 0, \, t \geq 1 \\ s+t = \ms{e} }}
				\frac{n! \, (2t-1)!!}{2^s \, \chi!}
				\frac{\ms{v}^{n+s+t}}{n! \, s! \, t!}
				\sum_{\ms{x} \in \ms{C}_1(\chi, \ms{v})}
					\prod_{i=1}^{\ms{v}} \ms{x}_i \\
		& \leq
		3\sqrt{g}
		\sum_{\substack{ \ms{v} \ge 2, \, \ms{e} \ge 1 \\ \ms{v} \leq \ms{e}+1 }}  
			\frac{\ms{v}^n}{\ms{v}!} \frac{(2\ms{v}C)^{\ms{e}}}{(2\ms{e})!}
				\frac{1}{\chi !}
				\sum_{\ms{x} \in \ms{C}_1(\chi, \ms{v})}
					\prod_{i=1}^{\ms{v}} \ms{x}_i.
	\end{align*}
	The second last inequality follows from the multinomial theorem, while the last inequality follows form $2^{-s} \frac{(2t-1)!!}{t!} \leq  2^{t-s} \leq 2^{\ms{e}}$ and $\sum_{s+t = \ms{e}} \frac{1}{s!} \le \exp(1) \le 3$. By \Cref{lem:K}, the above quantity is bounded by
	\[
		\frac{12}{\chi} \sqrt{g}
		\sum_{\substack{ \ms{v} \ge 2, \, \ms{e} \ge 1 \\ \ms{v} \leq \ms{e}+1 }}  
			\frac{\ms{v}^n}{\ms{v}!} \frac{(2\ms{v}C)^{\ms{e}}}{(2\ms{e})!}
		\leq
		\frac{12}{\chi} \sqrt{g}
		\sum_{\ms{v}=2}^{\infty}
			\frac{\ms{v}^n}{\ms{v}!}
			\exp(2\ms{v}C)
		\leq
		\frac{12 C'}{\chi} \sqrt{g}
		=
		\bO(g^{-1/2})
	\]
	for some $C' = C'(A,B,n) > 0$.
	Now the result follows from \Cref{prop:EMLeq}.
\end{proof}

\section{What is wrong with closed curves} \label{sec:wrong}

Contrary to the hyperbolic case, in the metric ribbon graph setting we consider the length spectrum of closed curves satisfying a specific condition: the cycle condition. The goal of this section is to explain why this is the case. In a nutshell, the reason is the absence of a collar lemma for metric ribbon graphs.

In \cite{Bas93,Bas13}, Basmajian has shown that the length of any closed geodesic with self-intersection number $k$ on any hyperbolic surface is bounded below by some universal constant $M_k$, and $M_k \to \infty$ as $k \to \infty$. In particular, if a closed geodesic intersects itself many times, then it cannot be very long. Basmajian's proof relies on the (generalised) collar lemma, which says roughly that a short closed geodesic on a hyperbolic surface has a large tubular neighbourhood which is a topological cylinder.

In the context of metric ribbon graphs, the collar lemma fails dramatically.
The distinction lies in the fact that metric ribbon graphs, unlike hyperbolic surfaces characterised by a constant non-zero sectional curvature, permit scaling. In particular, short curves of high topological complexity ($\omega(1)$ self-intersections or $\omega(1)$ intersections between two curves) can exist within a ribbon graph of low topological complexity, as we go deep into the thin part of the moduli space. This idea was used to establish the $L^p$-integrability of the combinatorial unit ball of measured foliations in \cite{BCDGW22}, which exhibit different behaviour compared to its hyperbolic analogue \cite{AA20}. In this section, we use the same idea to study the $L^p$-integrability of the functions $\bar{N}_{g, L, [a, b)}^{\circ}$ and $\bar{N}_{g, L, [a, b)}$.

Let us start with an estimate for the case of a one-holed torus.

\begin{lemma} \label{lem:NbarcircGeq}
	Fix $0 \leq a < b$.
	\begin{itemize} \setlength\itemsep{.5em}
		\item
		There exist $\epsilon^{\circ}, \delta^{\circ}, C^{\circ} > 0$ depending only on $a$ and $b$ such that, for any $L \leq \epsilon^{\circ}$ and $G \in \cM_{1, 1}^{\comb}(L)$ trivalent with edge lengths in $[(1/3 - \delta^{\circ}) L, (1/3 + \delta^{\circ}) L]$, we have
		\begin{equation}
			\bar{N}_{1, L, [a, b)}^{\circ}(G)
			\geq
			\frac{C^{\circ}}{L^2} .
		\end{equation}

		\item
		There exist $\epsilon, \delta, C, c > 0$ depending only on $a$ and $b$ such that, for any $L \leq \epsilon$ and $G \in \cM_{1, 1}^{\comb}(L)$ trivalent with edge lengths in $[(1/3 - \delta) L, (1/3 + \delta) L]$, we have
		\begin{equation}
			\bar{N}_{1, L, [a, b)}(G)
			\geq
			C \, L \, \re^{c/L}.
		\end{equation}
	\end{itemize}
\end{lemma}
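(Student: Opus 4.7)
The plan is to exploit the thinness of $G$ for small $L$. The theta graph $G$ has two trivalent vertices joined by three edges $e_1, e_2, e_3$, each of length within a factor $(1 \pm 3\delta)$ of $L/3$. Identify $\pi_1(\Sigma_{1,1}) \cong F_2$ via two of the three embedded loops of $G$, and note that every primitive free homotopy class of closed curves on $\Sigma_{1,1}$ has a unique cyclically reduced non-backtracking edge-path representative on $G$; if that representative traverses $k$ edges counted with multiplicity, its geodesic length lies in $\bigl[(1/3 - \delta) L k, (1/3 + \delta) L k\bigr]$. Both items then reduce to a purely combinatorial count on the 3-regular theta graph.

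For the first item, I use the classical identification of primitive simple closed curves on $\Sigma_{1,1}$ with primitive vectors $(p, q) \in (\ZZ^2 \setminus \{0\})/\pm$ via the homology map $H_1(\Sigma_{1,1};\ZZ) \cong \ZZ^2$. By an induction on the Stern--Brocot tree, starting from the three elementary simple curves $e_i \cup e_j$ ($i < j$) and using that a single Dehn twist modifies the number of theta-edges traversed by a bounded multiplicative factor, one obtains constants $\kappa_1, \kappa_2 > 0$ (depending only on $\delta$) such that the $(p, q)$-simple-closed-curve has geodesic length on $G$ in $[\kappa_1 (|p| + |q|) L, \kappa_2 (|p| + |q|) L]$. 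It then suffices to count primitive lattice points $(p, q)$ with $|p| + |q| \in [a/(\kappa_2 L), b/(\kappa_1 L)]$; a standard argument using the density $6/\pi^2$ of coprime pairs in an annular diamond of area $\Theta(1/L^2)$ yields $\Omega(1/L^2)$ such curves, giving $\bar{N}^{\circ}_{1, L, [a, b)}(G) \geq C^{\circ}/L^2$.

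For the second item, I count all primitive conjugacy classes of $F_2$ via primitive non-backtracking cyclic edge-paths on $G$. At every vertex of $G$ there are exactly $2$ non-backtracking continuations, so the non-backtracking operator on $G$ has top eigenvalue $2$; consequently, the number of closed non-backtracking walks of combinatorial length $2m$ based at a fixed oriented edge is $\Theta(4^m)$. Dividing by $2m$ for cyclic rotations and by $2$ for orientation reversal, and absorbing the exponentially smaller contribution of imprimitive walks via Möbius inversion, gives $\Theta(4^m/m)$ primitive conjugacy classes of combinatorial length $2m$. Each such class has geodesic length in $[2m(1/3 - \delta) L, 2m(1/3 + \delta) L]$. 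Selecting the single integer $m$ closest to $\lfloor 3b/(2(1+3\delta) L) \rfloor$ places the entire batch inside $[a, b)$ for $L$ small enough, and already contributes $\Theta(4^m/m) = \Theta(L \re^{c/L})$ classes with $c = 3b \ln 2 /(1+3\delta) > 0$, yielding $\bar{N}_{1, L, [a, b)}(G) \geq C L \re^{c/L}$.

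The main technical obstacle is establishing the two-sided linear bound on the number of theta-graph edges traversed by the $(p, q)$-simple-closed-curve in the first item. While the result is folklore for the flat torus or the equilateral theta graph, extending it to the perturbed graph requires a careful induction on Farey/Stern--Brocot mediants, tracking that each mediant step costs only a bounded multiplicative factor in the edge-traversal count; the perturbation parameter $\delta$ only shifts the constants $\kappa_1, \kappa_2$ by a factor $(1 + O(\delta))$, so the argument is stable. The second item, being a lower bound on an exponential count, is comparatively elementary once the correspondence between primitive conjugacy classes of $F_2$ and primitive non-backtracking cyclic edge-paths is made precise.
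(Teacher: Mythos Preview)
Your overall strategy matches the paper's: reduce to the theta graph, convert geodesic length into an approximate edge count, then invoke a quadratic count of simple closed curves for the first item and an exponential count of conjugacy classes for the second. The paper cites \cite{MR95} for the quadratic simple-curve count and uses the classical $\sim 3^m/m$ count of cyclically reduced words in $F_2$ for the second item; your variants (primitive lattice points, non-backtracking walks on the theta graph) are legitimate substitutes, and your second item is correct.

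There is, however, a genuine gap in your first item. First, the interval bounds are swapped: to force the length into $[a,b)$ you need $|p|+|q|\in[a/(\kappa_1 L),\,b/(\kappa_2 L))$, not the larger interval you wrote. More seriously, even on the \emph{equilateral} theta graph the edge count of the $(p,q)$-curve is exactly $|p|+|q|+|p+q|$, so the optimal constants satisfy $\kappa_2/\kappa_1=2$ (the ratio is $2$ for $(1,0)$ and $1$ for $(1,-1)$). Your Stern--Brocot induction cannot improve this, and the corrected interval is then empty whenever $b<2a$, so the lattice-point count gives nothing. The fix is easy: either work with the correct norm $N(p,q)=|p|+|q|+|p+q|$ and count primitive $(p,q)$ with $N(p,q)\in\bigl[3a/((1-3\delta)L),\,3b/((1+3\delta)L)\bigr)$, or restrict to the cone $p,q>0$ where the edge count is exactly $2(p+q)$ and the ratio collapses. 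Either route recovers $\Theta(1/L^2)$ primitive points as soon as $\delta<(b-a)/(3(a+b))$, which is essentially the same constraint the paper imposes on $\delta^{\circ}$.
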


\begin{proof}
	Let us proceed with the first claim. Let $\epsilon^{\circ} > 0$, $0 < \delta^{\circ} < 1/3$, and $G \in \cM_{1, 1}^{\comb}(\epsilon^{\circ})$ trivalent with edge lengths in $[(1/3 - \delta^{\circ}) L, (1/3 + \delta^{\circ}) L]$.
	The fundamental group of a torus with a single boundary component is a free group of rank $2$.
	We can choose a generating set $\{ \alpha, \beta \}$ such that $\alpha$ and $\beta$ traverse exactly two edges.
	A cyclically reductive word in $\{ \alpha, \beta \}$ of $m$ letters has length (with respect to $G$) between $2(1/3-\delta^{\circ})\epsilon^{\circ} m$ and $2(1/3+\delta^{\circ}) \epsilon^{\circ} m$. If $\epsilon^{\circ}$ and $m$ satisfy
	\[
		\frac{at}{2(1/3 - \delta^{\circ})} \leq \epsilon^{\circ} m < \frac{bt}{2(1/3+\delta^{\circ})}
		\qquad\text{for some }t > 0,
	\]
	which is possible whenever $\delta^{\circ} < (b-a)/(3a+3b)$. Then every cyclically reductive word of $m$ letters has length (with respect to $G$) in $[ta, tb)$, and we find
	\[
		\frac{a}{1/3 - \delta^{\circ}} \frac{t}{\epsilon^{\circ}} \leq m < \frac{b}{1/3 + \delta^{\circ}} \frac{t}{\epsilon^{\circ}}.
	\]
	It follows from \cite[Theorem~2.7]{MR95} that for $\epsilon^{\circ}$ small enough
	\[
		\bar{N}^{\circ}_{1, \epsilon^{\circ}, [ta, tb)}(G)
		\geq
		\frac{1}{4\zeta(2)} \left( \frac{b}{1/3 + \delta^{\circ}} - \frac{a}{1/3 - \delta^{\circ}} \right)^2 \left( \frac{t}{\epsilon^{\circ}} \right)^2.
	\]
	Therefore, for any $L \leq \epsilon^{\circ}$, we have
	\[
		\bar{N}^{\circ}_{1,L,[a, b)}(G)
		=
		\bar{N}^{\circ}_{1,\epsilon^{\circ},[\frac{\epsilon^{\circ}}{L} a, \frac{\epsilon^{\circ}}{L} b)}
		\left( \tfrac{\epsilon^{\circ}}{L} \cdot G \right)
		\geq 
		\frac{1}{4\zeta(2)} \left( \frac{b}{1/3 + \delta^{\circ}} - \frac{a}{1/3 - \delta^{\circ}} \right)^2 \frac{1}{L^2}
	\]
	as claimed.

	As for the second inequality, the argument is similar. It can be shown that the number of conjugacy classes in a torus with a single boundary component representing primitive closed curves of $m$ letters is asymptotically equivalent to $3^m/m$. Thus, for any small enough $\epsilon$, we have
	\[
		\bar{N}_{1, \epsilon, [ta, tb)}(G)
		\geq
		C (\epsilon / t) \exp(c t/\epsilon)
	\]
	for some $C, c > 0$.
\end{proof}

\begin{proof}[Proof of \Cref{thm:turtle:neck}]
	The volume of subset of $\cM_{1,1}^{\comb}(\ell)$ consisting of those metric ribbon graphs where every edge has length in $[(1/3-\delta^{\circ}) \ell, (1/3+\delta^{\circ}) \ell]$ is $(2\delta^{\circ})^3 \, V_{1,1}(\ell)$.
	Thus, it follows from the first claim of \Cref{lem:NbarcircGeq} that
	\[
		\EE \left[ (\bar{N}_{g, L, [a, b)}^{\circ})^{k} \right]
		\geq
		\frac{(C^{\circ})^k (2\delta^{\circ})^3}{V_{g,n}(L)}
		\int_0^{\epsilon^{\circ}}  \ell^{-2k} \cdot V_{1,1}(\ell) \cdot V_{g-1,n+1}(L_1, \dots, L_n, \ell) \, \ell \, d\ell.
	\]
	The right-hand side blows up if $k > 3/2$. 
	Similarly, from the second claim of \Cref{lem:NbarcircGeq}:
	\[
		\EE \left[ \bar{N}_{g, L, [a, b)} \right]
		\geq
		\frac{C (2\delta)^3}{V_{g,n}(L)}
		\int_0^{\epsilon} \ell \, \re^{C/\ell} \cdot V_{1,1}(\ell) \cdot V_{g-1,n+1}(L_1, \dots, L_n, \ell) \, \ell \, d\ell
		=
		\infty .
	\]
	This completes the proof.
\end{proof}

\section{Numerical evidence}\label{sec:numerics}

Let us briefly outline the simulation for the unicellular case, i.e.\ when $n = 1$. We fix the unique boundary length as $L = 12g$. A random ribbon graph is almost surely trivalent, and in the unicellular case, each trivalent ribbon graph has an equal chance of being selected. Hence, $\bm{G}_{g,L}$ can be sampled in two steps: firstly, we sample a trivalent one-faced ribbon graph, and secondly, we endow it with a uniformly random metric. Once a random metric graph is generated, we can count cycles to get the bottom part of its length spectrum.

\subsection*{Generating random unicellular metric maps} 
The first step can be achieved as follows. Based on the work of Chapuy, Féray, and Fusy \cite{CFF13}, the generation of a random combinatorial unicellular trivalent map can be reduced to generating a random plane trivalent tree and a random permutation that satisfies certain conditions.
The generation of random trees can be accomplished recursively using Rémy's algorithm \cite{Rem85}. Starting with the tree having one vertex and two leaves, at each step, an edge is uniformly chosen, a vertex is added at its midpoint, and a leaf is attached to the new vertex. Once the tree accumulates $6g-3$ edges, we randomly merge its leaves three-by-three, resulting in a random trivalent unicellular map of genus $g$. To conclude, each edge of the resulting graph is endowed with a length sampled according to a Dirichlet distribution of parameters $(1^{6g-3})$. See \Cref{fig:graph} for examples of random unicellular maps of genus $64$.

\begin{figure}[h]
	\begin{center}
	\includegraphics[width=.4\textwidth]{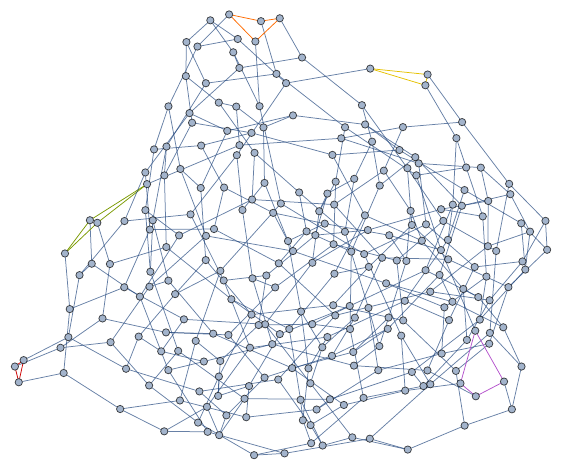}%
	\hspace{5mm}
	\includegraphics[width=.4\textwidth]{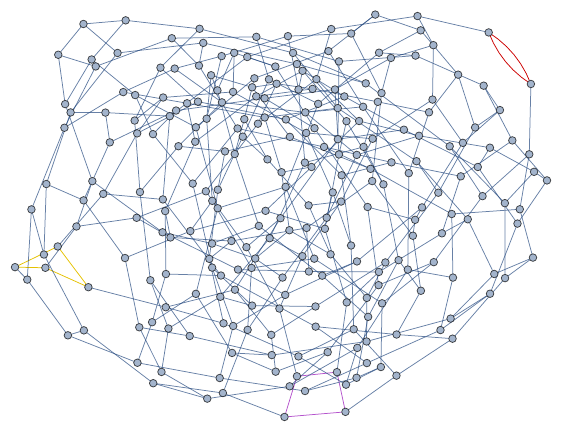}
	\caption{
		The graphs underlying two random unicellular maps of genus $64$. The highlighted cycles include all cycles with at most $4$ edges.
	}
	\label{fig:graph}
	\end{center}
\end{figure}

\subsection*{Counting cycles} 
What remains now is to find all cycles of length falling within a fixed interval $[a,b)$. We performed this search using the \texttt{FindCycle} function in \texttt{Mathematica} and recording the lengths of the resulting cycles.

\subsection*{The simulation} 
For the simulations presented in \Cref{fig:simulation}, we chose $[a,b) = [0,4)$. For efficiency reasons, our search was limited to cycles with at most $12$ edges. Since edge-lengths are, on average, equal to $1$, only few cycles are missed in the search. Nonetheless, this accounts for why the histogram is slightly below the theoretical prediction.

\printbibliography
\end{document}